\documentclass[11pt]{article}

\usepackage{MA_Titlepage}
\usepackage[margin=1.0in]{geometry}
\usepackage[onehalfspacing]{setspace}
\usepackage[english]{babel}
\usepackage[utf8x]{inputenc}
\usepackage{amsmath}
\usepackage{amssymb}
\usepackage{tikz-cd}
\usepackage{amsthm}
\usepackage{graphicx}
\usepackage[colorinlistoftodos]{todonotes}
\usepackage{hyperref}
\usepackage{mathtools}
\usepackage{mathrsfs}
\usepackage{dsfont}
\usepackage{diagbox}
\usepackage[nottoc,numbib]{tocbibind}

\hypersetup{
	linktocpage
}

\theoremstyle{definition}
\newtheorem{definition}{Definition}[section]
\newtheorem{remark}[definition]{Remark}
\theoremstyle{plain}
\newtheorem{lemma}[definition]{Lemma}
\newtheorem{corollary}[definition]{Corollary}
\newtheorem{theorem}[definition]{Theorem}
\newtheorem{proposition}[definition]{Proposition}
\newtheorem*{proposition*}{Proposition}
\theoremstyle{remark}

\theoremstyle{definition}
\theoremstyle{definition}

\newcommand{\Spa}[1]{\mathrm{Spa}(#1, {#1}^{+})}
\newcommand{\spa}[1]{\mathrm{Spa}\,#1}
\newcommand{\spec}[1]{\mathrm{Spec}\,#1}
\newcommand{\pair}[1]{(#1, {#1}^{+})}
\newcommand{\Tate}[2]{#1\langle X_{1},\dots, X_{#2} \rangle}

\newcommand{\ph}{\phantom{a}}

\authornew{Dmytro Rudenko}
\date{\today}

\betreuer{Advisor: Dr. Ian Gleason}
\zweitgutachter{Second Advisor: Prof. Dr. Peter Scholze}
\institut{Mathematisches Institut}
\title{On \'etale maps of sous-perfectoid adic spaces}
\ausarbeitungstyp{Master's Thesis  Mathematics}

\begin{document}
	
	\maketitle

	\begin{abstract}
		We prove a proposition for \'etale maps of sous-perfectoid adic spaces analogous to \cite[Proposition 1.7.1]{Hu96}, where the latter can be viewed as a form of Jacobian criterion for \'etale maps of noetherian\footnote{We point out that every ring considered in \cite{Hu96} satisfies some noetherian conditions, see \cite[Assumption 1.1.1]{Hu96}, while sous-perfectoid spaces are not always noetherian.} adic spaces. This allows us to provide some technical details to an important claim from the theory of \'etale maps of perfectoid spaces. Namely, we show how our proposition implies \cite[Proposition 6.4 (iv)]{S17} -- a sort of noetherian approximation for perfectoid rings. Afterwards, we give an explicit local description of the sheaf of differentials associated to a smooth map of sous-perfectoid adic spaces, as defined by Fargues-Scholze in \cite{FS}, in terms of the module of differentials. 
	\end{abstract}
	
	\tableofcontents

	\section{Introduction}
	
	The propositions mentioned in the abstract are the analogues of the following proposition.
	
	\begin{proposition}\label{proposition 0}
		Let $g\colon\spec{B}\to\spec{A}$ be a map of affine schemes. Then $g$ is \'etale if and only if there exists a presentation $B\cong A[X_{1},\dots,X_{n}]/(f_{1},\dots, f_{n})$ such that the determinant of the associated Jacobian matrix $\mathrm{det}(\frac{\partial f_{i}}{\partial X_{j}})_{1\leqslant i, j\leqslant n}$ is a unit in $B$.
	\end{proposition}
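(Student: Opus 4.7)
The plan is to prove the two implications separately: the backward one follows directly from the Jacobian criterion for smoothness, while the forward one requires analyzing the conormal module of a chosen finite presentation.

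For $(\Leftarrow)$: given $B \cong A[X_{1},\dots,X_{n}]/(f_{1},\dots,f_{n})$ with $\det(\partial f_{i}/\partial X_{j})$ a unit of $B$, I would invoke the standard Jacobian criterion to conclude that $g$ is smooth of relative dimension $n-n=0$, which is precisely the condition for $g$ to be \'etale.

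For $(\Rightarrow)$: since \'etale morphisms are of finite presentation, fix any finite presentation $B = A[X_{1},\dots,X_{n}]/I$. The conormal sequence
\[
0 \longrightarrow I/I^{2} \longrightarrow \Omega_{A[X]/A}\otimes_{A[X]} B \longrightarrow \Omega_{B/A} \longrightarrow 0
\]
is short exact: left-exactness comes from smoothness of $A \to B$ (the conormal sequence of a smooth morphism relative to an embedding into a smooth ambient scheme is split), and $\Omega_{B/A}=0$ by \'etaleness. Hence the first map identifies $I/I^{2}$ with the free $B$-module $\Omega_{A[X]/A}\otimes_{A[X]} B \cong B^{n}$. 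Lifting a free basis of $I/I^{2}$ to elements $f_{1},\dots,f_{n}\in I$, the Jacobian matrix $(\partial f_{i}/\partial X_{j})$ represents this very isomorphism in the chosen bases, so its determinant is automatically a unit in $B$.

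The remaining step, and the main obstacle, is to upgrade this to the honest equality $(f_{1},\dots,f_{n})=I$ of ideals in $A[X_{1},\dots,X_{n}]$. Setting $J=(f_{1},\dots,f_{n})$, we have $J+I^{2}=I$, so Nakayama's lemma yields $J_{\mathfrak{p}}=I_{\mathfrak{p}}$ at every prime $\mathfrak{p}\supset I$; equivalently, some $e\in 1+I$ annihilates $I/J$. A priori, however, $\spec(A[X]/J)$ could carry spurious components away from $\spec B$, which the previous observation only shows are clopen. One handles this by exploiting the freedom in the presentation, either by enlarging the variable set and adjusting the $f_{i}$ to kill the idempotent cutting out $\spec B$ inside $\spec(A[X]/J)$, or by reducing to the local standard \'etale case via the structure theorem for \'etale morphisms (where $B=(A[T]/(F))_{g}$ admits the two-variable presentation $A[T,S]/(F,\,1-gS)$ with Jacobian $-F'(T)g(T)$ a unit) and then globalizing. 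This clopen patching is where the technical heart of the proof lies.
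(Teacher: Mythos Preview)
Your approach is essentially the one the paper references (Stacks, Tag 00U9) and the one the paper itself carries out in the analogous sous-perfectoid setting in \S\ref{subsection global complete intersection}: write $B=A[X_1,\dots,X_n]/I$, use the conormal sequence and \'etaleness to get $I/I^2\cong B^{\oplus n}$, lift a basis to $f_1,\dots,f_n\in I$, and then invoke Nakayama. You have also correctly isolated the only real obstacle, namely that $(f_1,\dots,f_n)$ need not equal $I$ on the nose.

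Where your proposal remains vague is the resolution of that obstacle. You offer two options---``enlarge the variable set and kill the idempotent'' or ``reduce to standard \'etale and globalize''---without committing to either. The second route is genuinely hard (globalizing local presentations into a single global one is exactly the kind of ``local implies global'' difficulty the paper is at pains to avoid). The first route is the right one, and in fact you already know the precise mechanism: you write it down yourself in the standard \'etale case as $A[T,S]/(F,\,1-gS)$. The point is that this same trick works globally, with no reduction to standard \'etale needed. Nakayama (in the form of Lemma~\ref{Nakayama's lemma}) hands you a single element $g\in 1+I$ such that $f_1,\dots,f_n$ generate $I[1/g]$; since $g$ maps to $1$ in $B$, one has
\[
B \;=\; A[X_1,\dots,X_n]/I \;\cong\; A[X_1,\dots,X_n][\tfrac{1}{g}]/(f_1,\dots,f_n) \;\cong\; A[X_1,\dots,X_n,Y]/(f_1,\dots,f_n,\,gY-1),
\]
which is $n{+}1$ equations in $n{+}1$ variables with Jacobian determinant $g\cdot\det(\partial f_i/\partial X_j)$, a unit in $B$. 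This is exactly how the paper closes the argument in \S\ref{subsection global complete intersection}. So your outline is correct; what is missing is simply recognizing that the $gY-1$ trick applies directly to the Nakayama element, bypassing any idempotent or patching argument.
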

	
	This is a global and geometric description of \'etale $A$-algebras as global complete intersections of $n$ equations in $n$ variables. Adic spaces generalize schemes, (formal schemes, and rigid analytic varieties,) and therefore it would be nice to extend the above proposition for adic spaces. As was shown by Huber in \cite{Hu96}, the theory of adic spaces extends the one of schemes (and formal schemes, and rigid analytic varieties) quite rigidly under noetherian assumptions. In particular, the original formulation of Proposition 1.7.1 from \cite{Hu96} is a direct generalization of Proposition \ref{proposition 0}. Here we only include its simplified formulation.
	
	\begin{proposition}[simplified {\cite[Proposition 1.7.1]{Hu96}}]\label{proposition 1.7.1}
		Let $\pair{A}, \pair{B}$ be Huber pairs with $A, B$ complete Tate and strongly noetherian. Let $g\colon\Spa{B}\to\Spa{A}$ be a map of affinoid adic spaces. Then $g$ is \'etale if and only if there exists a presentation \[ \pair{B}\cong \pair{\Tate{A}{n}/(f_{1},\dots,f_{n})} \] such that the determinant of the associated Jacobian matrix $\mathrm{det}( \frac{\partial f_{i}}{\partial X_{j}})_{1\leqslant i, j\leqslant n}$ is a unit in $B$.
	\end{proposition}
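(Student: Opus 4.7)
The plan is to adapt the standard scheme-theoretic proof of Proposition \ref{proposition 0} to the complete Tate strongly noetherian setting, working throughout with continuous Kähler differentials and the conormal exact sequence. The argument naturally splits into the two implications, with the ``if'' direction being essentially formal and the ``only if'' direction containing the substantive work.

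For the \textbf{``if'' direction}, suppose $\pair{B}\cong\pair{\Tate{A}{n}/(f_{1},\dots,f_{n})}$ with $\det(\partial f_{i}/\partial X_{j})$ a unit in $B$. Topologically of finite presentation is built in, so it is enough to verify the formal lifting criterion for étale maps: given a complete Tate $A$-algebra $R$ with a closed square-zero ideal $I\subset R$ and a continuous $A$-algebra map $\bar\varphi\colon B\to R/I$, produce a unique continuous $A$-algebra lift $\varphi\colon B\to R$. Choosing arbitrary lifts $x_{i}\in R$ of $\bar\varphi(X_{i})$, the quantities $f_{j}(x_{1},\dots,x_{n})$ land in $I$, and the invertibility of the Jacobian together with $I^{2}=0$ yields, via one Newton step, a unique correction $x_{i}+\delta_{i}$ with $\delta_{i}\in I$ satisfying $f_{j}(x+\delta)=0$. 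This constructs the required lift.

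For the \textbf{``only if'' direction}, étaleness of $g$ implies that $B$ is topologically of finite presentation over $A$, so fix a presentation $B=\Tate{A}{m}/J$ for some $m$ and some closed ideal $J$. Étaleness further forces the module of continuous differentials $\Omega^{1,\mathrm{cts}}_{B/A}$ to vanish, and the conormal exact sequence then gives that the Jacobian map
\[
J/J^{2}\longrightarrow B^{m},\qquad f\longmapsto (\partial f/\partial Y_{j})_{j=1}^{m}\bmod J,
\]
is surjective. The strongly noetherian hypothesis guarantees that $J$, and hence $J/J^{2}$, is finitely generated over $B$, and a Nakayama-type argument then extracts elements $f_{1},\dots,f_{m}\in J$ whose images span $J/J^{2}$ and whose $m\times m$ Jacobian matrix $(\partial f_{i}/\partial Y_{j})$ is invertible in $B$.

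The \textbf{main obstacle} is the final comparison step. Set $C:=\Tate{A}{m}/(f_{1},\dots,f_{m})$; by the already-established ``if'' direction, $C$ is étale over $A$, and there is a canonical surjection $\psi\colon C\twoheadrightarrow B$. Since both $C$ and $B$ are étale over $A$, the map $\psi$ is itself étale, and a surjective étale map of affinoid Huber pairs exhibits $\Spa{B}$ as the inclusion of a clopen subspace of $\Spa{C}$, not necessarily all of it. The technical heart of the proposition is to upgrade this clopen inclusion to an isomorphism: either by showing directly, using the strongly noetherian structure, that the complementary idempotent in $C$ vanishes, or by modifying the chosen $f_{i}$ (absorbing the idempotent into one of the relations while preserving invertibility of the Jacobian determinant) so that the refined presentation cuts out precisely $B$. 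This is the point where Huber's noetherian conditions \cite[Assumption 1.1.1]{Hu96} are indispensable and where difficulties will reappear when transplanting the argument to the sous-perfectoid setting later in the paper.
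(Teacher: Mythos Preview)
Your ``if'' direction via the Newton-step lifting argument is fine and standard. The ``only if'' direction is correctly set up through the conormal sequence, and you are right that after choosing $f_{1},\dots,f_{m}\in J$ whose residues form a basis of $J/J^{2}\cong B^{m}$, the surjection $C:=\Tate{A}{m}/(f_{1},\dots,f_{m})\twoheadrightarrow B$ is \'etale, hence splits off $B$ as a direct factor. But neither of your two proposed resolutions of the ``main obstacle'' works. The complementary idempotent need not vanish: already over a field $A$ with $B=A$, one can easily have $C\cong A\times A$. And ``absorbing the idempotent into one of the $f_{i}$'' while keeping exactly $m$ relations in $m$ variables with invertible Jacobian is not something one can do in general; the idempotent $1-e$ lifts to an element of $J$ that has no reason to lie in $(f_{1},\dots,f_{m})$, nor to be expressible in a way that preserves the square Jacobian condition.

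The paper's resolution (carried out in subsection~\ref{subsection global complete intersection} for the sous-perfectoid analogue, following the same pattern as Huber) is different and avoids the idempotent issue entirely. One applies Nakayama's lemma (Lemma~\ref{Nakayama's lemma}) to the finite modules $I$ and $\Omega^{1}_{\mathbb{B}^{n}_{A}/\Spa{A}}$ to produce an element $g\in 1+J$ such that $f_{1},\dots,f_{m}$ generate $J$ \emph{after inverting $g$}. Since $g\equiv 1\bmod J$, inverting $g$ does not change $B$, and one obtains
\[
B\;\cong\;\Tate{A}{m}\left\langle\tfrac{1}{g}\right\rangle\big/(f_{1},\dots,f_{m})\;\cong\;A\langle X_{1},\dots,X_{m},X\rangle\big/(f_{1},\dots,f_{m},\,gX-1),
\]
a presentation with $m+1$ relations in $m+1$ variables whose Jacobian determinant is still a unit. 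This is the step you are missing.

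Finally, your localization of where noetherianity enters does not match Huber's proof as described in Section~\ref{section proof of noetherian approximation}: the paper reports that the sole noetherian use in \cite[Proposition~1.7.1]{Hu96} occurs in part (3) of the implication (ii)$\Rightarrow$(iii), where one must show a certain endomorphism $B\to B$ is bijective (surjectivity via \cite[III.2.9 Cor.~3]{B}, injectivity because surjective endomorphisms of noetherian rings are injective). The paper then observes that this injectivity can instead be obtained from the Tate hypothesis, so noetherianity is not in fact ``indispensable'' at that point --- contrary to your final paragraph.
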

	
	While (the original formulation of) Proposition \ref{proposition 1.7.1} yields a generalization of Proposition \ref{proposition 0} to locally noetherian adic spaces, it does not cover all cases of interest, because perfectoid adic spaces are usually not locally noetherian. The main proposition of this thesis is that, with a suitable definition\footnote{lower in the introduction} of \'etale maps of perfectoid spaces, Proposition \ref{proposition 1.7.1} remains true for those.
	
	\begin{proposition}[main proposition]\label{upgraded main proposition}
		Let $g\colon\Spa{B}\to\Spa{A}$ be a map of affinoid perfectoid spaces. Then $g$ is \'etale if and only if there exists a presentation \[ \pair{B}\cong \pair{\Tate{A}{n}/(f_{1},\dots,f_{n})} \] such that the determinant of the associated Jacobian matrix $\mathrm{det}( \frac{\partial f_{i}}{\partial X_{j}})_{1\leqslant i, j\leqslant n}$ is a unit in $B$.
	\end{proposition}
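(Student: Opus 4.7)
Since every affinoid perfectoid space is in particular affinoid sous-perfectoid, the natural plan is to establish a sous-perfectoid analogue of Proposition~\ref{proposition 1.7.1} and specialize; either way, the argument proceeds by handling the two directions separately, with the same essential ingredients.

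For the ``if'' direction, $\Spa{\Tate{A}{n}} \to \Spa{A}$ is smooth and preserves the (sous-)perfectoid property, and cutting out $n$ equations whose Jacobian determinant is a unit produces a closed immersion with $\Omega^{1}_{B/A}=0$; since $(f_{1},\dots,f_{n})$ forms a regular sequence of the correct codimension, the composite is étale, mimicking the schematic argument.

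For the ``only if'' direction, the input is that every étale morphism of affinoid perfectoid spaces is, locally on the source, a composition of a rational localization followed by a finite étale map. A rational localization $A\langle f_{1}/g,\dots,f_{k}/g \rangle$ is cut out of $\Tate{A}{k}$ by the system $(gX_{1}-f_{1},\dots,gX_{k}-f_{k})$, whose Jacobian determinant $g^{k}$ is manifestly a unit. Having a Jacobian presentation is stable under composition: if $B\cong\Tate{A}{m}/(f_{1},\dots,f_{m})$ and $C\cong\Tate{B}{n}/(h_{1},\dots,h_{n})$, then lifting the $h_{j}$ to $\Tate{A}{m+n}$ yields a block-triangular system whose Jacobian is the product of the two determinants, hence still a unit. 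It therefore suffices to handle the finite étale case.

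The finite étale case is the main obstacle. In the noetherian setting of Proposition~\ref{proposition 1.7.1} one produces a presentation $B\cong A\langle X \rangle/(f)$ with $f$ monic and $f'$ a unit via a primitive-element argument; this is not immediately available when $A$ is perfectoid. The plan is to approximate: write $\pair{A}$ as a filtered colimit (with dense image after completion) of complete strongly noetherian Tate pairs $\pair{A_{i}}$, descend the finite étale cover $B/A$ to some $B_{i}/A_{i}$ via an Elkik-type approximation for finite étale algebras, apply Proposition~\ref{proposition 1.7.1} at level $i$ to obtain a presentation of $B_{i}$ with invertible Jacobian, and base-change to $A$, noting that invertibility of the determinant is preserved under this base change. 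The delicate point is the descent step: one must know both that the finite étale cover survives approximation and that the ring $B$ recovered by base change agrees with the original, including the correct integral structure $B^{+}$ (on perfectoid rings, forced to be the integral closure of the image of $A^{+}$ in $B$). This is precisely the interplay with \cite[Proposition 6.4(iv)]{S17} flagged in the abstract, which the present proposition is designed to clarify.
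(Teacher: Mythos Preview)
Your proposal takes a fundamentally different route from the paper, and it has two genuine gaps.

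\textbf{Local versus global.} The definition of an \'etale map of sous-perfectoid spaces (Definition~\ref{smooth map}\,ii)) is local on \emph{both} source and target: only after passing to a suitable open cover of $\Spa{B}$ does one obtain a factorization into a rational localization followed by a finite \'etale map. Your composition-of-block-triangular-Jacobians argument therefore produces Jacobian presentations only over the members of such a cover, not a single global presentation $\pair{B}\cong\pair{\Tate{A}{n}/(f_1,\dots,f_n)}$. Passing from local Jacobian presentations to a global one is exactly the ``local-implies-global'' difficulty the paper highlights in Remark~\ref{local implies global remark}; you have not addressed it. In the noetherian setting Huber handles this via ideal sheaves of closed immersions, which is not available here.

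\textbf{The finite \'etale step.} Even granting a reduction to the finite \'etale case, your plan to write $\pair{A}$ as a filtered colimit of strongly noetherian Tate pairs and descend is not justified: no such approximation of a general perfectoid ring by noetherian rings is provided, and the Elkik-type descent you invoke is left as a black box. You yourself note that this ``is precisely the interplay with \cite[Proposition~6.4(iv)]{S17}\dots\ which the present proposition is designed to clarify''; but that proposition is about approximation by \emph{perfectoid} rings, and in the paper it is \emph{deduced from} the main proposition (Section~\ref{section proof of noetherian approximation}), not used as input. So as written this step is either unavailable or circular.

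\textbf{What the paper does instead.} The paper avoids both issues by working directly. First (Section~\ref{subsection global closed immersion}) it shows that any smooth map $\Spa{B}\to\Spa{A}$ of sous-perfectoid affinoids admits a \emph{global} factorization through a closed immersion $f\colon\Spa{B}\hookrightarrow\Spa{\Tate{A}{M}}$: one reproduces Huber's construction from \cite{Hu93} to get a LOCT closed immersion, then upgrades it to a global one using \cite[Proposition~IV.4.19]{FS}. The conormal sequence then gives $\mathcal{I}/\mathcal{I}^{2}\cong f^{*}\Omega^{1}_{\pi}$, free of rank $M$. Second (Section~\ref{subsection global complete intersection}), the Kedlaya--Liu theory of pseudocoherent sheaves shows $\mathcal{I}$ is determined by its global sections $I$, and $I/I^{2}\cong B^{\oplus M}$; choosing lifts $f_{1},\dots,f_{M}\in I$ of a basis and applying Nakayama produces $g\in 1+I$ such that $f_{1},\dots,f_{M}$ generate $I$ and $df_{1},\dots,df_{M}$ form a basis of $\Omega^{1}$ after inverting $g$. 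Since $g\equiv 1$ in $B$, one obtains the desired global presentation $B\cong A\langle X_{1},\dots,X_{M},X\rangle/(f_{1},\dots,f_{M},gX-1)$ with invertible Jacobian. No noetherian approximation, and no primitive-element or descent argument, is used.
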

	
	One application of this proposition is the following version of noetherian approximation for perfectoid rings from \cite{S17}.
	
	\begin{proposition}[simplified {\cite[Proposition 6.4 (iv)]{S17}}]\label{noetherian approximation}
		Let $X_{i} = \Spa{R_{i}}$, $i\in I$ be a cofiltered system of affinoid perfectoid spaces for some small index category $I$. Let $X = \Spa{R}$ be the inverse limit, where $R^{+} $ is the $\varpi$-adic completion of $\varinjlim_{i} R_{i}^{+}$, and $R = R^{+}[\frac{1}{\varpi}]$. Here, $\varpi\in R_{i}^{+}$ denotes any compatible choice of pseudouniformizers for large $i$. Then for every perfectoid affinoid space $\Spa{B}$ along with an \'etale map $g\colon\Spa{B}\to\Spa{R}$ there exists $i\in I$, a perfectoid affinoid space $\Spa{B_{i}}$, an \'etale map $g_{i}\colon\Spa{B_{i}}\to\Spa{R_{i}}$, and a cartesian square
		\[ \begin{tikzcd}
			\Spa{B}\arrow[d, "g"]\arrow[r]&\Spa{B_{i}}\arrow[d, "g_{i}"]\\
			\Spa{R}\arrow[r, "\pi_{i}"]&\Spa{R_{i}},\\
		\end{tikzcd} \]
		where $\pi_{i}$ is the canonical $i$-th projection from the inverse limit.
	\end{proposition}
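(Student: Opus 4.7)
The plan is to apply Proposition \ref{upgraded main proposition} to the given \'etale map $g\colon\Spa{B}\to\Spa{R}$ and then descend the resulting presentation from $R$ to $R_{i}$ for some $i$ sufficiently far along in the cofiltered system. The main proposition supplies an isomorphism
\[ B\cong R\langle X_{1},\dots,X_{n}\rangle/(f_{1},\dots,f_{n}) \]
with Jacobian $J:=\det(\partial f_{j}/\partial X_{k})$ a unit in $B$. After scaling by a suitable power of $\varpi$, we may assume $f_{j}\in R^{+}\langle X_{1},\dots,X_{n}\rangle$; since the latter is the $\varpi$-adic completion of $\varinjlim_{i}R_{i}^{+}\langle X_{1},\dots,X_{n}\rangle$, for any prescribed $N$ we may, after passing to some large $i\in I$, choose $f_{j,i}\in R_{i}^{+}\langle X_{1},\dots,X_{n}\rangle$ whose images in $R^{+}\langle X_{1},\dots,X_{n}\rangle$ agree with $f_{j}$ modulo $\varpi^{N}$. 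This produces the candidate
\[ B_{i}:=R_{i}\langle X_{1},\dots,X_{n}\rangle/(f_{1,i},\dots,f_{n,i}) \]
and a natural map $g_{i}\colon\Spa{B_{i}}\to\Spa{R_{i}}$.

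For $i$ sufficiently far along I would verify three claims, which together prove the proposition: (a) the Jacobian $J_{i}:=\det(\partial f_{j,i}/\partial X_{k})$ is a unit in $B_{i}$, so that the easy direction of Proposition \ref{upgraded main proposition} makes $g_{i}$ \'etale; (b) $B_{i}$ is affinoid perfectoid, which should follow from (a) using the definition of \'etale maps of perfectoid spaces employed in the thesis; and (c) the canonical map $B_{i}\widehat{\otimes}_{R_{i}}R\to B$ is an isomorphism, yielding the cartesian square.

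Claim (c) amounts to showing that the ideals $(f_{1,i},\dots,f_{n,i})$ and $(f_{1},\dots,f_{n})$ coincide in $R\langle X_{1},\dots,X_{n}\rangle$. Since $f_{j}-f_{j,i}$ lies in $\varpi^{N}R^{+}\langle X_{1},\dots,X_{n}\rangle$ and $J$ is a unit in $B$, a Hensel/Newton-iteration argument re-expresses each $f_{j}$ as an $R\langle X_{1},\dots,X_{n}\rangle$-linear combination of the $f_{j,i}$'s (and symmetrically), so the two ideals agree once $N$ is large enough.

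The principal obstacle is claim (a). Arranging that the image of $J_{i}$ in $B$ is close to the unit $J$ does not by itself force $J_{i}$ to be a unit in $B_{i}$, since the base change map $B_{i}\to B$ is not faithfully flat in any obvious way. My approach is to approximate, in tandem with the $f_{j,i}$'s, an inverse $h\in B$ of $J$ together with a witness expressing $hJ-1$ as an explicit $R\langle X_{1},\dots,X_{n}\rangle$-linear combination of the $f_{j}$'s. Pulling these approximations back to level $i$ produces $h_{i}\in B_{i}$ with $h_{i}J_{i}-1$ controllably $\varpi$-adically small, and a contraction-mapping argument in the complete Banach algebra $B_{i}$ then upgrades $h_{i}$ to an honest inverse of $J_{i}$. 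This technical step is the heart of the argument.
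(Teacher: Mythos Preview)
Your approach is essentially correct, but it diverges from the paper's route in a way worth noting. The paper does not perform the perturbation analysis by hand; instead it invokes two auxiliary results established just before the proof: a refinement (Proposition~\ref{refined upgraded main proposition}) asserting that the $f_{j}$ in the presentation can be taken to be \emph{polynomials} in $R[X_{1},\dots,X_{n}]$, and a perturbation corollary (Corollary~\ref{corollary}, the sous-perfectoid analogue of Huber's Corollary~1.7.2) asserting that replacing $f_{1},\dots,f_{n}$ by any nearby tuple $f_{1}',\dots,f_{n}'$ yields a quotient $B'$ with unit Jacobian and an isomorphism $\Spa{B}\cong\Spa{B'}$ over $\Spa{R}$. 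With those in hand the proof is two lines: since the $f_{j}$ are polynomials with coefficients in $R^{+}$, perturb the finitely many coefficients into the dense subring $\varinjlim_{i}R_{i}^{+}$, and by filteredness they all live in a single $R_{i}^{+}$; then $B_{i}:=R_{i}\langle X\rangle/(f_{1},\dots,f_{n})$ works, with the cartesian square and the unit Jacobian coming for free from Corollary~\ref{corollary}.

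Your claims (a) and (c), together with your Hensel/Newton and ``approximate the inverse plus geometric series'' arguments, amount to reproving Corollary~\ref{corollary} inline. That is legitimate, and your sketch of (a) is the right idea: lift a witness $hJ-1=\sum c_{j}f_{j}$ to level $i$ so that $h_{i}J_{i}\equiv 1\pmod{\varpi^{N}}$ in $B_{i}$, then invert via the usual ``elements close to $1$ are units'' argument. One point you should make explicit: that last step needs $B_{i}$ to be a complete (Hausdorff) Huber ring, i.e.\ the ideal $(f_{1,i},\dots,f_{n,i})\subset R_{i}\langle X\rangle$ must be closed. This is not automatic in the non-noetherian setting; one way around it is to first pass to the quotient by the closure, run your unit argument there, and then use the resulting \'etaleness (via Lemma~\ref{regular sequence} or the easy direction of the main proposition) to conclude a posteriori that the ideal was already closed. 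The paper's packaging via Corollary~\ref{corollary} absorbs this subtlety.
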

	
	We explain how the above proposition follows from Proposition \ref{upgraded main proposition} in section \ref{section proof of noetherian approximation}.
	
	We now touch the main points of the proof of Proposition \ref{upgraded main proposition} while simultaneously explaining the structure of this thesis.
		
	The proof of the Proposition \ref{upgraded main proposition} is contained in section \ref{section proof of the main proposition}. While it is similar to the proofs of propositions \ref{proposition 0}, \ref{proposition 1.7.1}, we need to account for technical difficulties which do not occur under noetherian assumptions. 
	
	First of all, all proofs require some notion of a smooth map of adic spaces and of the associated sheaf of differentials. The notion of formal smoothness was first defined in \cite[Ch. 0, Def. 19.3.1]{EGA IV} as a property of a map of topological rings to admit lifts of thickenings, to which we refer as to the property of ``lifting of differentials". This was done with view towards formal schemes and so, using the same criterion of lifting of differentials, one can define formally smooth maps of formal schemes as well as the associated sheaves of differentials. In fact, as was shown by Huber in \cite{Hu96}, an analogous definition of a smooth map of adic spaces via lifting of differentials works well in the category of noetherian adic spaces. However, the usual definition of a smooth map would not make sense in the category of perfectoid adic spaces, because every perfectoid ring is reduced. Thus, the first technical problem is to define the notion of a smooth map of perfectiod spaces and of the associated sheaf of differentials. This problem was already solved in \cite{FS}. The authors of \cite{FS} adopt the approach of working with sous-perfectoid rings as defined in \cite[Definition 7.1]{HK} instead of perfectoid rings. Even though still reduced, sous-perfectoid rings exhibit better stability properties, for example, if $R$ is sous-perfectoid, then so is $\Tate{R}{n}$. Second of all, motivated by Huber's \cite[Corollary 1.6.10]{Hu96} and by the uniformizing parameters from the world of schemes, Fargues and Scholze define smoothness of a map of sous-perfectoid adic spaces as follows. 
	\begin{definition}[{\cite[Definition IV.4.8]{FS}}]\label{smooth map}
		\begin{enumerate}
			\item[iii)] A map $f\colon Y\to X$ of sous-perfectoid adic spaces is called \emph{smooth} if one can cover $Y$ by open subsets $V\subset Y$, for each of which there exist an integer $d\geq 0$ and an \'etale map $V\to\mathbb{B}_{X}^{d}$ over $X$.
			\item[ii)] A map $f\colon Y\to X$ of sous-perfectoid adic spaces is called \emph{\'etale} if locally on the source and target it can be written as an open immersion of adic spaces followed by a finite  \'etale map.
			\item[i)] A map $f\colon Y\to X$ of sous-perfectoid adic spaces is called \emph{finite \'etale} if for all open affinoid $\spa{A}\subset X$ the pullback $Y\times_{X}\spa{A}\subset Y $ is also open affinoid $\spa{B}$, where $B$ is a finite \'etale $A$-algebra and $B^{+}$ is the integral closure of the image of $A^{+}$.
		\end{enumerate}
	\end{definition}
	Now, with that definition, a statement analogous to uniformizing parameters holds for smooth maps of sous-perfectoid adic spaces, by definition. Furthermore, for every smooth map $f\colon Y\to X$ of sous-perfectoid adic spaces Fargues and Scholze define the sheaf of differentials $\Omega^{1}_{Y/X}$.
	\begin{definition}[{\cite[Definition IV.4.11]{FS}}]\label{definition sheaf of differentials}
		Let $f\colon Y\to X$ be a smooth map of sous-perfectoid adic spaces, with diagonal $\Delta_{f}\colon Y\to Y\times_{X}Y$. Let $\mathcal{I}_{Y/X}\subset \mathcal{O}_{Y\times_{X}Y}$ be the ideal sheaf. Then \[ \Omega^{1}_{Y/X}:=\mathcal{I}_{Y/X}/\mathcal{I}_{Y/X}^{2} \] considered as a $\mathcal{O}_{Y\times_{X}Y}/\mathcal{I}_{Y/X} = \mathcal{O}_{Y}$-module.
	\end{definition}
	As was verified in \cite{FS}, the sheaf of differentials and smooth maps defined in \emph{loc. cit.} have properties similar to those of the usual sheaf of differentials and usual smooth maps (defined via lifting of differentials in noetherian setting).
	
	Now we are able to point out the two main technical difficulties from the proof of Proposition \ref{upgraded main proposition}. In fact, in order to prove Proposition \ref{upgraded main proposition}, we first prove the following (a priori weaker) statement:
	\begin{proposition}\label{main proposition}
		Let $g\colon\Spa{B}\to\Spa{A}$ be an \'etale map of affinoid sous-perfectoid spaces. Then it factors through a global closed immersion\footnote{for the definition of this see Definition \ref{global closed immersion}.} $f$
		\[\begin{tikzcd}
			\ph&\Spa{\Tate{A}{n}}\arrow[d, "\pi"]\\
			\Spa{B}\arrow[r, "g"]\arrow[ur, "f"]&\Spa{A}
		\end{tikzcd}\]
		such that the natural map from the conormal exact sequence $\mathcal{I}/\mathcal{I}^{2}\to f^{*}\Omega^{1}_{\pi}$ is an isomorphism, where $\mathcal{I}$ is the ideal sheaf of the closed immersion $f$.
	\end{proposition}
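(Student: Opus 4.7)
The plan is to follow the strategy of Huber's proof of Proposition~\ref{proposition 1.7.1}, but without relying on noetherian hypotheses. The starting point is Definition~\ref{smooth map}(ii): locally on $\Spa{B}$, the map $g$ factors as an open immersion followed by a finite étale map. I would treat these two building blocks separately, construct the desired closed immersion in each case, and then compose them.

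For a rational localization $\Spa{B} = \Spa{A}(a_{1}/s,\dots,a_{k}/s)$, the tautological presentation $B \cong A\langle T_{1},\dots,T_{k}\rangle/(sT_{i}-a_{i})$ yields a closed immersion $f\colon\Spa{B}\hookrightarrow\Spa{\Tate{A}{k}}$. Since $a_{i},s\in A$, in $\Omega^{1}_{\Tate{A}{k}/A}$ one has $d(sT_{i}-a_{i}) = s\, dT_{i}$, so the conormal map sends the ideal generator $sT_{i}-a_{i}$ to $s\cdot dT_{i}$. Because $s$ is a unit in $B$ by the definition of a rational subset, and the $dT_{i}$ freely generate $f^{*}\Omega^{1}_{\pi}\cong B^{k}$, the conormal map is an isomorphism.

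For a finite étale map $\Spa{C}\to\Spa{A}$, I would construct a presentation $C\cong\Tate{A}{m}/(q_{1},\dots,q_{m})$ whose Jacobian determinant $\det(\partial q_{i}/\partial X_{j})$ is a unit in $C$; the conormal map is then represented by the (transpose of the) Jacobian matrix, hence an isomorphism. To treat a general étale $g$, I would combine the two cases: a factorization $\Spa{B}\hookrightarrow\Spa{C}\to\Spa{A}$ into an open immersion (locally realized as a rational subset) followed by a finite étale map yields closed immersions $\Spa{B}\hookrightarrow\Spa{\Tate{C}{k}}$ and $\Spa{C}\hookrightarrow\Spa{\Tate{A}{m}}$; composing them via the closed immersion $\Spa{\Tate{C}{k}}\hookrightarrow\Spa{\Tate{A}{m+k}}$, induced by extending the surjection $\Tate{A}{m}\twoheadrightarrow C$ with the new variables $T_{1},\dots,T_{k}$, produces a closed immersion $\Spa{B}\hookrightarrow\Spa{\Tate{A}{m+k}}$. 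The conormal iso for the composite should follow from the functoriality of the conormal exact sequence together with the natural splitting of $\Omega^{1}_{\Tate{A}{m+k}/A}$ into the factors coming from the $X_{j}$ and the $T_{i}$.

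The main obstacle is constructing the Jacobian-unit presentation for finite étale maps in the sous-perfectoid setting, since Huber's Corollary~1.6.10 is not directly available and one must adapt the argument by hand, using only the good stability properties of finite étale sous-perfectoid extensions from \cite{HK}. A secondary obstacle is globalizing Definition~\ref{smooth map}(ii) from a merely local factorization to a single global closed immersion; I expect to handle this by choosing topological generators of $B$ over $A$ that refine all the local presentations appearing on a finite affinoid cover of $\Spa{B}$.
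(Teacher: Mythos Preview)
Your decomposition into ``rational localization'' and ``finite \'etale'' pieces is reasonable locally, but you have inverted the difficulty of the two obstacles you name. What you call the \emph{secondary} obstacle---passing from local factorizations to a single global closed immersion---is in fact the heart of the matter, and your one-line plan (``choose topological generators of $B$ over $A$ that refine all the local presentations'') does not explain why the resulting map $\Tate{A}{M}\to B$ is surjective and open. In the noetherian setting Huber deduces this from his theory of ideal sheaves of closed immersions; that argument is unavailable here. The paper's route is to carry out Huber's construction from \cite{Hu93} carefully enough to obtain a map $\psi\colon A\langle\text{many variables}\rangle\to B$ which is only a \emph{LOCT} closed immersion (a quotient map after pulling back to each piece of a standard rational cover of the target), and then to invoke \cite[Proposition~IV.4.19]{FS}---whose proof rests on Kedlaya--Liu's pseudocoherent sheaf theory---to upgrade LOCT to global. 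This pseudocoherence input is the genuinely new ingredient replacing noetherianness, and it does not appear anywhere in your outline.

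Your ``main obstacle'' is also a real gap: producing a Jacobian-unit presentation $C\cong\Tate{A}{m}/(q_{1},\dots,q_{m})$ for a finite \'etale $C$ is precisely the special case of Proposition~\ref{upgraded main proposition} you are trying to prove, and you give no independent mechanism for it. (The scheme-theoretic proof uses Nakayama plus the conormal sequence; the paper runs the analogous argument in Section~\ref{subsection global complete intersection}, but only \emph{after} the global closed immersion has been constructed.) Finally, in your rational-localization step you implicitly assume that the ideal $(sT_{i}-a_{i})\subset\Tate{A}{k}$ is already closed and that its image in $\mathcal{I}/\mathcal{I}^{2}$ is generated by these elements; this is true for sous-perfectoid $A$, but it again requires the regular-sequence/pseudocoherence results of \cite{FS} and \cite{KL16} that you have not cited.
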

	
	We would like to emphasize that in the world of sous-perfectoid adic spaces, even for affinoids, one only has a local on the target and sourse definition of an \'etale map (the definition \ref{smooth map} ii) above). It is a pervading theme in this setting that statements of the form ``local implies global" are difficult to prove (see remark \ref{local implies global remark} for more discussion). It is therefore essential that the closed immersion $f$ from Proposition \ref{main proposition} is global. Therefore, the first technical difficulty (maybe surprisingly) is to prove that the global closed immersion $f$ even exists. Indeed, the way Huber proves ``local implies global" under noetherian hypotheses is different from the way we do it (without noetherian hypotheses but for sous-perfectoid spaces). We prove Proposition \ref{main proposition} in subsection \ref{subsection global closed immersion}.
	
	The second technical difficulty is to get Proposition \ref{upgraded main proposition} from Proposition \ref{main proposition}.  To be able to find a presentation of $B$ we need to be able to say that the ideal sheaves $\mathcal{I}$ and $\mathcal{I}/\mathcal{I}^{2}$ are determined by their global sections $I$ and $I/I^{2}$. For this, we need a theory of ``quasicoherent" sheaves on adic spaces, and the theory of pseudocoherent sheaves of Kedlaya-Liu from \cite{KL16} saves our day. We have to carefully check that we never leave the world of pseudocoherent sheaves, and then we can upgrade Proposition \ref{main proposition} to Proposition \ref{upgraded main proposition}, which we do in subsection \ref{subsection global complete intersection}.
	
	Finally, we compute locally the sheaf of differentials from Definition \ref{definition sheaf of differentials} in terms of the module of differentials. We prove the following proposition in section \ref{section the sheaf of differentials}.
	
	\begin{proposition}\label{description of the sheaf of differentials}
		Let $f\colon Y\to X$ be a smooth map of sous-perfectoid adic spaces. For any open affinoids $\spa{B}\subset Y$ and $\spa{A}\subset X$ with $f(\spa{B})\subset \spa{A}$ holds \[ \Omega^{1}_{Y/X}\mid_{\spa{B}}\cong \Omega^{1}_{\spa{B}/\spa{A}}\cong \widehat{\Omega^{1}_{B/A}}\footnote{rather, its tildification $\widetilde{\widehat{\Omega^{1}_{B/A}}}$, which we decided to omit for notational purposes.}, \]
		where $\widehat{\Omega^{1}_{B/A}}$ is the usual module of differentials completed w.r.t. a suitable topology (defined later).
	\end{proposition}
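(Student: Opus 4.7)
The plan is to prove the two isomorphisms separately: the first by a formal locality argument exploiting stability of open immersions under fibre products, and the second by exhibiting mutually inverse maps via the universal derivation.

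For the first isomorphism, one observes that $\spa{B}\times_{\spa{A}}\spa{B}$ is naturally an open subspace of $Y\times_{X}Y$ (since $\spa{B}\hookrightarrow Y$ is an open immersion and $f(\spa{B})\subset \spa{A}$), and that under this embedding the diagonal of $\spa{B}/\spa{A}$ is the restriction of $\Delta_{f}$. Consequently the ideal sheaves $\mathcal{I}_{\spa{B}/\spa{A}}$ and $\mathcal{I}_{Y/X}$ agree after restriction to this open, and so do their squares; passing to the quotient and restricting further along the diagonal to $\spa{B}$ yields the first claimed isomorphism.

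For the second and main isomorphism, set $C := B\widehat{\otimes}_{A}B = \mathcal{O}(\spa{B}\times_{\spa{A}}\spa{B})$, let $\mu\colon C\to B$ be the multiplication map, and let $I := \ker\mu$. Since $f$ is smooth, the diagonal of $\spa{B}/\spa{A}$ is a closed immersion of affinoid sous-perfectoid spaces, so $\mathcal{I}_{\spa{B}/\spa{A}}$ is the tildification of $I$ and $\Omega^{1}_{\spa{B}/\spa{A}}$ is the tildification of the $B$-module $I/\overline{I^{2}}$ (closure taken to stay within the realm of pseudocoherent modules). On the one hand, the continuous $A$-linear map $B\to I/\overline{I^{2}}$, $b\mapsto 1\otimes b - b\otimes 1$, is a derivation (the Leibniz rule coming from $(1\otimes b - b\otimes 1)(1\otimes b' - b'\otimes 1)\in I^{2}$) and thus factors uniquely through a continuous $B$-linear map $\widehat{\Omega^{1}_{B/A}}\to I/\overline{I^{2}}$. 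In the other direction, the identity
\[ \sum_{i} b_{i}\otimes b_{i}' \;=\; \Bigl(\sum_{i} b_{i}b_{i}'\Bigr)\otimes 1 \;+\; \sum_{i} b_{i}\bigl(1\otimes b_{i}' - b_{i}'\otimes 1\bigr) \]
shows that any element of $I$ of the form $\sum_{i}b_{i}\otimes b_{i}'$ (i.e., with $\sum_{i}b_{i}b_{i}'=0$) can be written as $\sum_{i} b_{i}\cdot(1\otimes b_{i}' - b_{i}'\otimes 1)$; thus the rule $\sum_{i}b_{i}\otimes b_{i}' \mapsto \sum_{i} b_{i}\,db_{i}'$ defines a continuous $B$-linear map $I\to \widehat{\Omega^{1}_{B/A}}$ that vanishes on $I^{2}$ and hence descends through $I/\overline{I^{2}}$. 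The two maps are mutually inverse on elementary tensors, and by continuity on all of $I/\overline{I^{2}}$ and $\widehat{\Omega^{1}_{B/A}}$.

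The main technical obstacle will be the topological bookkeeping: one must verify that $I/\overline{I^{2}}$ is complete, that $\widehat{\Omega^{1}_{B/A}}$ is legitimately pseudocoherent over $B$ so that its tildification makes sense, and that the maps constructed above are continuous for the natural topologies. The cleanest route is likely to verify the proposition first in the base case $Y=\mathbb{B}^{d}_{X}$, i.e. $B = \Tate{A}{d}$, where $I$ is generated by $T_{i}-T_{i}'$ and $\widehat{\Omega^{1}_{B/A}}$ is visibly free on $dT_{1},\dots,dT_{d}$, and then to propagate the result along an \'etale chart $Y\to \mathbb{B}^{d}_{X}$ using the conormal exact sequence together with the compatibility of both sides with \'etale base change.
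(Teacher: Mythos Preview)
Your approach differs substantially from the paper's. The paper never constructs explicit inverse maps via the universal derivation. Instead, writing $J := \ker(\widehat{B\otimes_A B} \to B)$ (your $I$) and $I := \ker(B\otimes_A B \to B)$ for the uncompleted kernel, it proceeds in two steps. First it proves $J = \widehat{I}$ by analyzing how $\varpi$-adic completion interacts with the short exact sequence $0 \to I_0 \to B_0\otimes_{A_0} B_0 \to B_0 \to 0$ on rings of definition: one passes to $\varpi$-torsion-free quotients, uses that derived and classical $\varpi$-completion agree on modules of bounded torsion together with $\varpi$-torsion-freeness of $B_0$ to retain exactness after completion, and then applies a lemma on when completion commutes with inverting $\varpi$. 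Second it proves $\widehat{\widehat{I}/\widehat{I}^2} \cong \widehat{I/I^2}$ by checking that both satisfy the universal property of a would-be ``$\widehat{I\otimes_C B}$''. Since $\Omega^1_{B/A} = I/I^2$ and $J/J^2$ is already complete (being finite projective over $B$), the second isomorphism follows.

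The obstruction in your direct construction is more than bookkeeping, and it lies precisely at the backward map. Elements of your $I \subset B\widehat{\otimes}_A B$ are not all finite sums $\sum_i b_i \otimes b_i'$; your rule $\sum_i b_i \otimes b_i' \mapsto \sum_i b_i\, db_i'$ is therefore only defined on the image of the uncompleted kernel, and extending by continuity requires that image to be dense in $I$. That density statement is exactly $J = \widehat{I}$, the paper's Step~1, and it genuinely needs an argument because completion does not preserve kernels in general. Your alternative route through $\mathbb{B}^d_X$ is reasonable for the base case (once one notes that $\Tate{A_0}{d}/\varpi^n$ is a polynomial ring over $A_0/\varpi^n$, so $\widehat{\Omega^1}$ is free on the $dT_i$), but the propagation step is not innocent: adic-\'etale maps include rational localizations, for which the algebraic $\Omega^1_{S/\Tate{A}{d}}$ need not vanish, so compatibility of $\widehat{\Omega^1_{-/A}}$ with such base change is not a formal consequence of the algebraic conormal sequence. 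Establishing that compatibility, and the localization behaviour needed to pass from small charts to an arbitrary affinoid $\spa{B}$, would require work of comparable depth to what the paper does directly.
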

	
	\begin{center}
		\textbf{Acknowledgements}
	\end{center}
	
	I would like to express my sincere gratitude to my master's thesis supervisor Ian Gleason for regularly supporting me, listening through all of my mathematical endeavours, and encouraging me. Without his continuous guidance and proofreadings of drafts, this thesis would have had much less content and been much less readable. I am grateful to Peter Scholze for giving me this thesis problem, and for directing me in some critical moments throughtout my thesis. I am thankful to Max Hauck and Guido Bosco for our numerous and valuable discussions.

	\section{Preliminaries}\label{section preliminaries}
	
	In this section we remind the reader some of the useful facts about Huber pairs and rational localizations, as well as define something that we call a ``LOCT closed immersion", which we need for the proof of Proposition \ref{main proposition}. Since this thesis is not self-contained, the reader is assumed to be familiar with adic spaces. Nevertheless, the author had already written some preliminary facts/definitions and decided not to delete them. Experienced reader can skip subsection \ref{subsection rational localization of a quotient map is a quotient map}.
	 
	\subsection{Rational localization of a quotient map is a quotient map}\label{subsection rational localization of a quotient map is a quotient map}
	
	We start by stating a couple of lemmas. The first lemma is to remind the reader that a continuous group homomorphism of topological groups is open and surjective if and only if it is a topological quotient map.
	
	\begin{lemma}\label{group action quotient}
		Let a group $G$ act on a topological space $X$ by homeomorphisms. If we endow $X/G$ with the quotient topology with respect to the projection $\pi\colon X\to X/G$, then the projection $\pi$ is an open map.
	\end{lemma}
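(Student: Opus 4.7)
The plan is to apply the defining property of the quotient topology: by construction of the topology on $X/G$, a subset $V \subset X/G$ is open if and only if its preimage $\pi^{-1}(V) \subset X$ is open. Therefore, given an arbitrary open $U \subset X$, it suffices to argue that $\pi^{-1}(\pi(U))$ is open in $X$, which will establish that $\pi(U)$ is open in $X/G$ and hence that $\pi$ is an open map.

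The key identification is that $\pi^{-1}(\pi(U)) = \bigcup_{g \in G} g \cdot U$, because two points of $X$ are identified under $\pi$ precisely when they lie in the same $G$-orbit. Since each element $g \in G$ acts by a homeomorphism, each translate $g \cdot U$ is open in $X$, and so the union $\bigcup_{g \in G} g \cdot U$ is open as well. This concludes the argument; there is no real obstacle, as the proof is entirely formal and uses only the definition of the quotient topology together with the hypothesis that $G$ acts by homeomorphisms.
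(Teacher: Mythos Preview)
Your proof is correct and is the standard argument; the paper itself simply writes ``This is clear.'' and omits all details, so you have supplied exactly the routine verification the author left to the reader.
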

	\begin{proof}
		This is clear.
	\end{proof}
	
	\begin{lemma}\label{image of the closure is the closure of the image}
		Let $f\colon X\to Y$ be a map of topological spaces, $S\subset X$ be a subset. If $\overline{S}$ denotes the closure, we have $\overline{f(\overline{S})} = \overline{f(S)}$. In particular, $f(\overline{S})\subset \overline{f(S)}$.
	\end{lemma}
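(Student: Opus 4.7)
The statement is a general topology lemma, and the map $f$ should be understood to be continuous (which is implicit, since unadorned ``maps of topological spaces'' in this context always mean continuous maps). Once that is granted, the two inclusions can be handled separately and the argument is quite short.

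For the inclusion $\overline{f(\overline{S})}\subset \overline{f(S)}$, the plan is to first establish the pointwise statement $f(\overline{S})\subset \overline{f(S)}$ and then pass to closures. To get $f(\overline{S})\subset \overline{f(S)}$, consider $f^{-1}(\overline{f(S)})$: this is closed in $X$ by continuity of $f$, and since $f(S)\subset \overline{f(S)}$, the preimage contains $S$. Hence it contains $\overline{S}$ as well, which is exactly the desired inclusion. Taking closures in $Y$ then gives $\overline{f(\overline{S})}\subset \overline{\overline{f(S)}}=\overline{f(S)}$.

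For the reverse inclusion $\overline{f(S)}\subset \overline{f(\overline{S})}$, one just notes that $S\subset \overline{S}$ implies $f(S)\subset f(\overline{S})$, and passing to closures yields the claim. Combining both inclusions gives the equality $\overline{f(\overline{S})}=\overline{f(S)}$, and the ``in particular'' clause $f(\overline{S})\subset\overline{f(S)}$ is the pointwise inclusion already established in the first step.

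There is no genuine obstacle here; the only subtlety worth flagging is the implicit continuity hypothesis on $f$, since without it the inclusion $f(\overline{S})\subset \overline{f(S)}$ fails in general. If desired, one could state the lemma more pedantically as ``Let $f\colon X\to Y$ be a continuous map'' to make this fully explicit.
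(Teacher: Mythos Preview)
Your proof is correct; the paper itself simply writes ``This is clear.'' and gives no further argument, so your detailed verification (including the remark that continuity of $f$ is being used) is entirely in line with what the author intended.
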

	\begin{proof}
		This is clear.
	\end{proof}
		
	The rest of this subsection is devoted to Huber rings and pairs. Given a topological ring $A$, the set of power-bouned elements of $A$ is denoted by $A^{\circ}$.
	
	\begin{definition}
		A topological ring $A$ is a \emph{Huber ring} if it contains an open subring $A_{0}\subset A$ such that the induced topology on $A_{0}$ is $I$-adic for some finitely generated ideal $I$ of $A_{0}$. Any such pair $(A_{0}, I)$ is called a \emph{pair of definition} of the Huber ring $A$. 
	\end{definition}
	
	\begin{definition}
		A Huber ring $A$ is called \emph{Tate} if it contains a topologically nilpotent unit $\varpi\in A^{\times}$, called \emph{pseudouniformizer}.
	\end{definition}
	
	From this point onwards, all Huber rings are assumed to be Hausdorff complete and Tate, unless otherwise explicitly stated. 
		
	\begin{definition}
		A \emph{Huber pair} is a pair $\pair{A}$, where $A$ is a Huber ring and $A^{+}\subset A$ is an open, integrally closed subring of $A$ contained in $A^{\circ}$. A \emph{map of Huber pairs } $f\colon \pair{A}\to \pair{B}$ is a continuous ring homomorphism $f\colon A\to B$ which satisfies $f(A^{+})\subset B^{+}$.
	\end{definition}

	\begin{definition}
		A map of Huber pairs $f\colon\pair{A}\to\pair{B}$ is called a \emph{quotient map} if $f$ is open surjective and $B^{+} = f(A^{+})^{int}$ is the integral closure of the image of $A^{+}$.
	\end{definition}

	Every quotient map of Huber pairs is of the following form and has the following universal property.
	
	\begin{lemma}\label{quotient pair}
		Let $\pair{A}$ be a Huber pair and let $J\subset A$ be a closed ideal. Then the ring $A/J$ endowed with the quotient topology is a Huber ring, and if we define $A/J^{+}:= (A^{+}/(A^{+}\cap J))^{int}$ to be the integral closure of the image of $A^{+}$ in $A/J$, the corresponding pair $\pair{A/J}$ is a Huber pair. The canonical quotient map $\pair{A}\to\pair{A/J}$ satisfies the following universal property: a map of Huber pairs $f\colon \pair{A}\to\pair{B}$ factors uniquely through $\pair{A/J}$ if and only if $f(J) = \left\lbrace 0\right\rbrace $.
	\end{lemma}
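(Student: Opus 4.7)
The plan is to verify each assertion in turn, using that $J$ is closed to control the topology and Hausdorffness of the quotient.

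First I would show that $A/J$ is a (Hausdorff, complete, Tate) Huber ring. Pick a pair of definition $(A_0, I)$ of $A$; since $A$ is Tate with pseudouniformizer $\varpi$, one may arrange $I = (\varpi)$ after possibly enlarging $A_0$. The quotient map $\pi\colon A\to A/J$ is an open map (Lemma \ref{group action quotient} applied to the translation action of $J$), so the image $\pi(A_0)$ is open in $A/J$, and the subspace topology on $\pi(A_0)$ agrees with the quotient topology from $A_0$, whose basis of neighbourhoods of zero is given by $\pi(\varpi^n A_0) = \bar\varpi^n\cdot \pi(A_0)$. Hence $(\pi(A_0), (\bar\varpi))$ is a pair of definition for $A/J$, so $A/J$ is a Huber ring. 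It is Tate unless $J = A$ (if $\varpi\in J$ then $1\in J$, which is the degenerate case), and it is Hausdorff because $J$ is closed. Completeness of $A/J$ follows from completeness of $A$ together with the fact that the topology admits a countable basis $\{\bar\varpi^n \pi(A_0)\}$, so one can apply the standard fact that the quotient of a complete Hausdorff first-countable topological group by a closed subgroup is again complete.

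Next I would verify that $\pair{A/J}$ is a Huber pair. By construction $(A/J)^+$ is integrally closed in $A/J$ and contains $1$, and it is a subring because the integral closure of a subring is a subring. It is open since the image $\pi(A^+)$ is open in $A/J$ (as $A^+$ is open in $A$ and $\pi$ is open) and $(A/J)^+$ contains this image. To see $(A/J)^+\subset (A/J)^\circ$, note that any element of $\pi(A^+)$ is power-bounded in $A/J$ because $\pi$ is continuous and power-boundedness is preserved by continuous ring homomorphisms; since $(A/J)^\circ$ is itself an integrally closed subring of $A/J$ containing $\pi(A^+)$, it contains its integral closure $(A/J)^+$.

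Finally, for the universal property, a purely algebraic argument gives that $f\colon A\to B$ factors (uniquely, as a ring map) through $A/J$ if and only if $f(J) = 0$, and the factored map $\bar f\colon A/J\to B$ is automatically continuous by the universal property of the quotient topology. It remains to check $\bar f((A/J)^+)\subset B^+$: since $f(A^+)\subset B^+$, we have $\bar f(\pi(A^+))\subset B^+$, and because $B^+$ is integrally closed in $B$ it also contains the integral closure of $\bar f(\pi(A^+))$ in $A/J$ mapped into $B$, which is $\bar f((A/J)^+)$.

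The only step that is not purely formal is the completeness of $A/J$; once one invokes the countable basis of neighbourhoods of zero provided by the Tate assumption, this becomes standard, and the remaining verifications are direct from the definitions together with Lemma \ref{group action quotient}.
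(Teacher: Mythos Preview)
Your argument is correct. The paper itself omits the proof of this lemma entirely, writing only ``So far omitted, but this is clear.'' Your verification therefore supplies what the paper leaves implicit: you check in turn that $A/J$ is a Huber ring (using Lemma~\ref{group action quotient} for openness of $\pi$, closedness of $J$ for Hausdorffness, and the countable basis coming from the Tate assumption for completeness), that $\pair{A/J}$ is a Huber pair (openness and integral closedness of $(A/J)^{+}$, and the containment in $(A/J)^{\circ}$ via the fact that power-bounded elements form an integrally closed subring), and finally the universal property via the standard combination of the ring-theoretic first isomorphism theorem, the universal property of the quotient topology, and preservation of integrality under ring maps. All steps are sound; there is nothing to compare against on the paper's side.
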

	\begin{proof}
		So far omitted, but this is clear.
	\end{proof}
	
	We now define rational localizations of Huber pairs. An alternative definition can be found inside Proposition \ref{rational localization}.
	\begin{definition}
		Let $\pair{A}$ be a Huber pair, $T = \left\lbrace t_{1}, \dots, t_{n}\right\rbrace \subset A$ be a finite subset of elements that generate the unit ideal $(t_{1},\dots, t_{n}) = A$, and $s\in A$ be any element. The \emph{rational localization} of $\pair{A}$ with respect to $T$ and $ s$ is a Huber pair $\pair{A\left\langle\frac{T}{s}\right\rangle}$ which is defined as follows. Choose a pair of definition $(A_{0}, I)$ of the Huber ring $A$. Endow the ring $A[\frac{1}{s}]$ with a topology by declaring $(A_{0}[\frac{t_{1}}{s},\dots, \frac{t_{n}}{s}], I\cdot A_{0}[\frac{t_{1}}{s},\dots, \frac{t_{n}}{s}])$\footnote{the ring $A_{0}[\frac{t_{1}}{s},\dots, \frac{t_{n}}{s}]$ is formed by adjoining the elements $\frac{t_{1}}{s},\dots, \frac{t_{n}}{s}$ to the image of $A_{0}$ under the localization map $A\to A[\frac{1}{s}]$.} to be a pair of definition of $A[\frac{1}{s}]$. The resulting  Huber ring, which is not complete in general, is denoted by $A( \frac{T}{s}) $. Defining $ A( \frac{T}{s})^{+} := $ $(A^{+}[\frac{t_{1}}{s},\dots, \frac{t_{n}}{s}])^{int}$ as the integral closure of the ring generated by the image of $A^{+}$ and $\frac{t_{1}}{s},\dots, \frac{t_{n}}{s}$ yields the (non-complete) Huber pair $\pair{A( \frac{T}{s})}$. The completion of this (non-complete) Huber pair is a Huber pair denoted by $\pair{A\left\langle \frac{T}{s}\right\rangle}$. 
	\end{definition}
	
	The remark below can be found in \cite{Hu94} Statement 1.2 just above the Proposition 1.3 or in \cite{Wed} Proposition/Definition 5.51 along with the beginning of Section 8.1.
	
	\begin{remark}
		The canonical map of Huber pairs $\pair{A}\to\pair{A\left\langle \frac{T}{s}\right\rangle}$ satisfies the following universal property. A map of Huber pairs $f\colon\pair{A}\to \pair{B}$ factors uniquely through $\pair{A\left\langle \frac{T}{s}\right\rangle}$ if and only if $f(s)\in B$ is invertible and for all $i = 1,\dots, n$ holds $\frac{f(t_{i})}{f(s)}\in B^{+}$.
	\end{remark}
		
	We mention an alternative way of defining a rational localizaton of a Huber pair, and for the convenience of the reader we prove the equivalence of the constructions in the proposition below.
	
	\begin{proposition}[cf. {\cite[Lemma 2.4.13]{KL15}}]\label{rational localization}
		Let $\pair{A}$ be a Huber pair, $T = \left\lbrace t_{1}, \dots, t_{n}\right\rbrace \subset A$ be a finite subset of elements that generate the unit ideal $(t_{1},\dots, t_{n}) = A$, and $s\in A$ be any element. The quotient of the Huber pair $\pair{\Tate{A}{n}}$ by the closure of the ideal $(t_{1}-sX_{1},\dots, t_{n}-sX_{n})$ is a Huber pair isomorphic to $\pair{A\left\langle \frac{T}{s}\right\rangle}$, and this isomorphism is unique.
	\end{proposition}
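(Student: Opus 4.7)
The natural approach is to prove this by Yoneda: I will show that both Huber pairs corepresent the same functor on the category of Huber pairs, and then the desired unique isomorphism falls out.

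The plan is as follows. First, recall the universal property of the Tate algebra $\Tate{A}{n}$: a map of Huber pairs $f\colon\pair{A}\to\pair{B}$ extends uniquely to a map $\pair{\Tate{A}{n}}\to\pair{B}$ for each choice of $n$-tuple $(b_1,\dots,b_n)\in (B^+)^n$, via $X_i\mapsto b_i$. Combine this with Lemma \ref{quotient pair}: since $B$ is Hausdorff and any continuous map kills an ideal if and only if it kills its closure, the extension further factors through the quotient by $\overline{(t_1-sX_1,\dots,t_n-sX_n)}$ precisely when $f(t_i)=f(s)b_i$ for all $i$. Hence the quotient Huber pair corepresents the functor sending $\pair{B}$ to the set of maps of Huber pairs $f\colon\pair{A}\to\pair{B}$ equipped with elements $b_1,\dots,b_n\in B^+$ satisfying $f(t_i)=f(s)b_i$.

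Second, I will show this is the same functor as the one corepresented by $\pair{A\langle T/s\rangle}$, whose universal property was stated in the Remark preceding the proposition: namely, $f$ extends (uniquely) iff $f(s)\in B^\times$ and $f(t_i)/f(s)\in B^+$ for all $i$. The forward direction is trivial: given such an $f$, set $b_i:=f(t_i)/f(s)\in B^+$, and note $f(t_i)=f(s)b_i$. For the reverse direction, given $f$ and elements $b_i\in B^+$ with $f(t_i)=f(s)b_i$, use that $(t_1,\dots,t_n)=A$ to write $1=\sum a_i t_i$ for some $a_i\in A$; applying $f$ yields
\[ 1=\sum_i f(a_i)f(t_i)=f(s)\cdot\sum_i f(a_i)b_i, \]
so $f(s)$ is a unit in $B$, and then $f(t_i)/f(s)=b_i\in B^+$. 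Thus the two universal properties coincide, and the Yoneda lemma gives the unique isomorphism of Huber pairs.

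The main obstacle I anticipate is bookkeeping around the integral closure condition defining $A\langle T/s\rangle^+$ versus the integral closure condition produced by the quotient construction of Lemma \ref{quotient pair}, and checking that in the isomorphism both sides have matching $+$-subrings. This should be forced by the universal property too, since the $+$-structure is determined functorially, but it is worth spelling out that the elements $\frac{t_i}{s}$ in $A\langle T/s\rangle^+$ on the one hand, and the images of $X_i$ in the quotient on the other, correspond under the isomorphism, so the generators of the two integral closures match. A minor secondary point is that the universal property is stated for maps into complete Huber pairs, which is exactly the setting in which the completion in the definition of $A\langle T/s\rangle$ and the closure in the ideal $(t_i-sX_i)$ are invisible — but this is the standing assumption in the excerpt.
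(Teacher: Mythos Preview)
Your proposal is correct and follows essentially the same approach as the paper's proof: both verify that the quotient Huber pair satisfies the universal property of $\pair{A\langle T/s\rangle}$, using the universal property of the Tate algebra, Lemma~\ref{quotient pair}, the Hausdorffness of $B$ to pass to the closure of the ideal, and the unit ideal condition on $t_1,\dots,t_n$ to deduce invertibility of $f(s)$. Your explicit Yoneda framing and your remark on the $+$-subrings are slight elaborations, but the argument is the same.
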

	\begin{proof}
		It is sufficient to show that this quotient satisfies the same universal property as $\pair{A\left\langle \frac{T}{s}\right\rangle}$. For this, consider an arbitrary map of Huber pairs $f\colon\pair{A}\to \pair{B}$. Assume it factors through \[ \pair{\Tate{A}{n}/\overline{(t_{1}-sX_{1},\dots, t_{n}-sX_{n})}} .\]
		By lemma \ref{quotient pair}, it is equivalent to saying that the corresponding map \[ g\colon\pair{\Tate{A}{n}}\to \pair{B} \] sends $\overline{(t_{1}-sX_{1},\dots, t_{n}-sX_{n})}$ to $\left\lbrace 0\right\rbrace $. By lemma \ref{image of the closure is the closure of the image} and by the assumption that $B$ is Hausdorff, it is equivalent to saying that $(t_{1}-sX_{1},\dots, t_{n}-sX_{n})$ is sent to $\left\lbrace 0\right\rbrace $. Explicitly, it means that for all $i = 1,\dots, n$ holds \[ g(t_{i}-sX_{i}) = g(t_{i})-g(s)g(X_{i}) = f(t_{i})-f(s)g(X_{i}) = 0.\]
		Therefore, such a map $g$ is equivalent to choosing $g(X_{1}),\dots, g(X_{n}) \in B^{+}$ such that for all $i = 1,\dots, n$ holds $f(t_{i})=f(s)g(X_{i})$. Notice that this implies that $f(s)\in B$ is invertible because $(t_{1},\dots, t_{n})$ generate the unit ideal in $A$: 
		\[ 1 = a_{1}t_{1}+\dots+a_{n}t_{n} \implies 1= f(a_{1})f(t_{1})+\dots+f(a_{n})f(t_{n}) = f(a_{1})f(s)g(X_{i})+\dots+f(a_{n})f(s)g(X_{n}).\]
		Therefore, the existence of such a map $g$ carries the following information: $f(s)\in B$ is invertible and for all $i = 1,\dots, n$ holds $\frac{f(t_{i})}{f(s)}\in B^{+}$. Since all implications were equivalences, we are done.
	\end{proof}
	
	We can therefore view rational localizations as quotiens as above.
	
	We decided to prove the following lemma by hand.
	
	\begin{lemma}\label{open surjectivity is preserved by Tate extensions}
		Let $f\colon A\to B$ be an open surjective map of Huber rings. Then the induced map \[ g\colon\Tate{A}{n}\to \Tate{B}{n}\]
		sending $X_{i}$ to $X_{i}$ for all $i = 1,\dots, n$ is open surjective.
	\end{lemma}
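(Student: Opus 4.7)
I will work at the level of pairs of definition and lift coefficient by coefficient. Fix pseudouniformizers $\varpi \in A$, $\varpi_B \in B$ and rings of definition $A_0 \subset A$, $B_0 \subset B$; then the Tate algebras $A\langle X_1,\ldots,X_n\rangle$ and $B\langle X_1,\ldots,X_n\rangle$ carry neighborhood bases of $0$ given by $\varpi^k A_0\langle X_1,\ldots,X_n\rangle$ and $\varpi_B^m B_0\langle X_1,\ldots,X_n\rangle$ respectively, and a typical element takes the form $\sum_\alpha c_\alpha X^\alpha$ with $c_\alpha \to 0$ in the coefficient ring. The map $g$ is just coefficientwise application of $f$.

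\textbf{Key input.} The main ingredient is the following quantitative consequence of openness of $f$: for each $K \geq 0$ there exists $N(K)$ with $\varpi_B^{N(K)} B_0 \subset f(\varpi^K A_0)$. Indeed, $f(\varpi^K A_0)$ is open in $B$ since $f$ is open, and contains $0$, so it contains some basic open $\varpi_B^{N(K)} B_0$. Dually, setting $k(m) := \max\{k : \varpi_B^m B_0 \subset f(\varpi^k A_0)\}$, I deduce $k(m) \to \infty$ as $m \to \infty$: for any $K$, once $m \geq N(K)$ the chain $\varpi_B^m B_0 \subset \varpi_B^{N(K)} B_0 \subset f(\varpi^K A_0)$ forces $k(m) \geq K$.

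\textbf{Surjectivity and openness.} Given $h = \sum b_\alpha X^\alpha \in B\langle X_1,\ldots,X_n\rangle$ with $b_\alpha \to 0$, define $m(\alpha) = \max\{m : b_\alpha \in \varpi_B^m B_0\}$, which is finite unless $b_\alpha = 0$ by Hausdorffness of $B$, and satisfies $m(\alpha)\to\infty$; hence $k(m(\alpha)) \to \infty$. Lifting each $b_\alpha$ to $a_\alpha \in \varpi^{k(m(\alpha))} A_0$ via the containment $\varpi_B^{m(\alpha)} B_0 \subset f(\varpi^{k(m(\alpha))} A_0)$ produces a preimage $\sum a_\alpha X^\alpha$ of $h$ with $a_\alpha \to 0$, proving surjectivity. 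For openness, the identical construction applied to $\sum b_\alpha X^\alpha$ with $b_\alpha \in \varpi_B^{N(K)} B_0$ lands in $\varpi^K A_0 \langle X_1,\ldots,X_n\rangle$, because $m(\alpha) \geq N(K)$ forces $k(m(\alpha)) \geq K$. Thus $g(\varpi^K A_0\langle X_1,\ldots,X_n\rangle) \supset \varpi_B^{N(K)} B_0\langle X_1,\ldots,X_n\rangle$, a neighborhood of $0$; since $g$ is a group homomorphism, this suffices.

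The real difficulty, and the reason mere surjectivity of $f$ does not suffice, is to guarantee that coefficient-wise lifts actually converge to zero. The quantitative form $k(m) \to \infty$ extracted above supplies precisely the uniform control needed to align the convergence rates of $(b_\alpha)$ and $(a_\alpha)$; once it is in hand, both surjectivity and openness of $g$ reduce to careful bookkeeping.
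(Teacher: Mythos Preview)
Your proof is correct and follows essentially the same strategy as the paper's: lift coefficient by coefficient, using openness of $f$ to control how deep in the filtration each lift can be chosen so that the resulting sequence still tends to zero. The paper works with a general pair of definition $(A_0,I)$ and argues the exact equality $g(I^i\langle X\rangle) = f(I^i)\langle X\rangle$ (first proving surjectivity, then reusing it for openness), whereas you package the key estimate once as $k(m)\to\infty$ and derive both properties from it; these are the same computation with slightly different bookkeeping.

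One small patch: as written, your definitions of $m(\alpha)$ and $k(m)$ tacitly assume $b_\alpha \in B_0$ and $\varpi_B^{m} B_0 \subset f(A_0)$, which can fail for finitely many indices $\alpha$ (respectively for small $m$). For those finitely many bad coefficients simply lift via plain surjectivity of $f$; changing finitely many terms does not affect convergence. The paper handles the analogous point explicitly in its openness argument.
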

	\begin{proof}
		An element of $\Tate{A}{n}$ is given by a convergent sequence $(a_{n})_{n\in\mathbb{N}}\subset A$, $a_{n} \to 0$. One can check that continuity of $g$ implies the explicit formula \[ g((a_{n})_{n\in\mathbb{N}}) = (f(a_{n}))_{n\in \mathbb{N}},\]
		where $(f(a_{n}))_{n\in \mathbb{N}}\subset B$ is a convergent sequence. 
		
		We first show surjectivity.  Let $(b_{n})_{n\in\mathbb{N}}\subset B$, $b_{n}\to 0$ be an arbitrary element of $\Tate{B}{n}$. We will find $(a_{n})_{n\in\mathbb{N}}\subset A$, $a_{n} \to 0$ with $f(a_{n}) = b_{n}$. Let $(A_{0}, I)$ be a pair of definition of $A$. Let $I, I^{2}, I^{3},\dots$ be a basis of neighbourhoods of $0\in A$. We know by assumption that $f(I), f(I^{2}), f(I^{3}), \dots$ are open, and hence each contains almost all $(b_{n})_{n\in\mathbb{N}}$. Let $b_{n_{i}-1}$ be the last among $(b_{n})_{n\in\mathbb{N}}$ that is outside of $f(I^{i})$. In particular, $b_{n_{i}}, b_{n_{i}+1},\dots$ are inside $f(I^{i})$. Choose arbitrary $a_{n}^{\prime}$ with $f(a_{n}^{\prime}) = b_{n}$. Then $f^{-1}(f(I^{i}))$ contains $a_{n_{i}}^{\prime}, a_{n_{i}+1}^{\prime}, \dots$. At the same time, $f^{-1}(f(I^{i})) = I^{i}+\ker f$. It means that $a_{n_{i}}^{\prime} = a_{n_{i}}+e_{n_{i}}$, where $a_{n_{i}}\in I^{i}$ and $e_{n_{i}}\in\ker f$. Then $f(a_{n_{i}}^{\prime}) = f(a_{n_{i}})$. Clearly we have the convergence $(a_{n_{i}})_{i\in\mathbb{N}}\to 0$. Moreover, by our choice of $n_{i}$, for any $n\geqslant n_{i}$ we have $a_{n}^{\prime}\in I^{i}+\ker f$. So we choose any decompositions $a_{n}^{\prime} = a_{n}+e_{n}$ (for those not yet chosen) with $a_{n}\in I^{i}$ and $e_{n}\in\ker f$. This way, for any $n_{i} < n < n_{i+1}$ we would have $a_{n_{i}}\in I^{i}$, $a_{n}\in I^{i}$, $a_{n_{i+1}}\in I^{i+1}$. Hence, $(a_{n})_{n\in \mathbb{N}}\to 0$.
		
		It is left to show that $g$ is open. We know from our assumption that for all $i\in\mathrm{N}$ the image $f(I^{i})$ is open in $B$. It is then sufficient to show that for all $i \in\mathbb{N}$ holds $g(\Tate{I^{i}}{n}) = \Tate{f(I^{i})}{n}$. Clearly $g(\Tate{I^{i}}{n}) \subset \Tate{f(I^{i})}{n}$. We now show the converse inclusion. Consider any element of $\Tate{f(I^{i})}{n}$, which is just a convergent sequence $(b_{n})_{n\in\mathbb{N}}\subset f(I^{i})$, $b_{n}\to 0$. Because $g$ was proven to be surjective, choose any $(a_{n})_{n\in\mathbb{N}}\subset A$, $a_{n} \to 0$ with $f(a_{n}) = b_{n}$. As $a_{n} \to 0$, there exists $M\in\mathbb{N}$ such that for all $n> M$ holds $a_{n} \in I^{i}$. We are almost done, but we need to also have $a_{n} \in I^{i}$ for $n \leqslant M$. For this, just choose any $a_{n} \in I^{i}$ with $f(a_{n}) = b_{n}$ (this is possible because $b_{n}\in f(I^{i})$). Changing finitely many terms does not spoil convergence, so we are done.
		
	\end{proof}
	
	Finally, we prove that rational localization of a quotient map of Huber pairs is again a quotient map of Huber pairs.
	
	\begin{proposition}\label{open surjectivity is preserved by rational localizations}
		Let $f\colon A\to B$ be an open surjective map of Huber rings. Let $T = \left\lbrace t_{1}, \dots, t_{n}\right\rbrace \subset A$ be a finite subset of elements that generate the unit ideal $(t_{1},\dots, t_{n}) = A$, and $s\in A$ be any element. Then the induced map of Huber rings \[ h\colon A\left\langle \frac{t_{1},\dots, t_{n}}{s}\right\rangle \to B\left\langle \frac{f(t_{1}),\dots, f(t_{n})}{f(s)} \right\rangle \]
		is open surjective.
	\end{proposition}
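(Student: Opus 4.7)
The plan is to reduce to the situation of Lemma \ref{open surjectivity is preserved by Tate extensions} via the alternative description of rational localizations as quotients of Tate algebras provided by Proposition \ref{rational localization}. Concretely, consider the commutative square
\[\begin{tikzcd}
\Tate{A}{n}\arrow[r, "g"]\arrow[d, "p_{A}"'] & \Tate{B}{n}\arrow[d, "p_{B}"] \\
A\left\langle \frac{T}{s}\right\rangle \arrow[r, "h"] & B\left\langle \frac{f(T)}{f(s)}\right\rangle
\end{tikzcd}\]
where $g$ is induced by $f$ as in Lemma \ref{open surjectivity is preserved by Tate extensions}, and where $p_{A}, p_{B}$ are the quotient maps by the closures of the respective ideals $(t_{i}-sX_{i})$ and $(f(t_{i})-f(s)X_{i})$, as in Proposition \ref{rational localization}. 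Commutativity is immediate from the universal properties.

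First I would check that $p_{A}$ and $p_{B}$ are themselves open surjective. Surjectivity is clear from the construction, and openness follows from Lemma \ref{group action quotient} applied to the additive translation action of the closed ideal, since the quotient topology on a topological ring modulo a closed additive subgroup makes the projection an open map.

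Next, Lemma \ref{open surjectivity is preserved by Tate extensions} gives that $g$ is open surjective. Now the proof becomes a formal diagram chase. For surjectivity of $h$: pick $y\in B\left\langle \frac{f(T)}{f(s)}\right\rangle$, lift it along $p_{B}$ to $\widetilde{y}\in\Tate{B}{n}$, lift this in turn along $g$ to $x\in\Tate{A}{n}$, and observe that $h(p_{A}(x)) = p_{B}(g(x)) = y$. For openness: given an open subset $U\subset A\left\langle \frac{T}{s}\right\rangle$, its preimage $p_{A}^{-1}(U)$ is open in $\Tate{A}{n}$, hence $g(p_{A}^{-1}(U))$ is open in $\Tate{B}{n}$ by openness of $g$, and finally $p_{B}(g(p_{A}^{-1}(U)))$ is open in $B\left\langle \frac{f(T)}{f(s)}\right\rangle$ by openness of $p_{B}$; by commutativity and surjectivity of $p_{A}$ this last set equals $h(U)$.

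I do not anticipate a real obstacle here: once one has the description of rational localizations as quotients of Tate algebras, the statement is purely formal and reduces cleanly to the two ingredients (Lemma \ref{open surjectivity is preserved by Tate extensions} and the fact that topological quotient maps of topological rings by closed ideals are open). The only mild subtlety is the use of Lemma \ref{image of the closure is the closure of the image} and the Hausdorff assumption to ensure that the image of the closure of the defining ideal in $\Tate{A}{n}$ lies inside the closure of the defining ideal in $\Tate{B}{n}$, so that $h$ is well-defined; this is already built into the proof of Proposition \ref{rational localization}.
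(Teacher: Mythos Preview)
Your proof is correct and follows exactly the same route as the paper: the same commutative square with $g$ on top and the quotient maps $p_A,p_B$ on the sides, the same appeal to Lemma~\ref{open surjectivity is preserved by Tate extensions} for $g$ and to Lemma~\ref{group action quotient} for the openness of the projections, and the same conclusion that $h$ is open surjective. The only difference is that you spell out the diagram chase for surjectivity and openness of $h$ explicitly, whereas the paper simply asserts ``Hence, $h$ is open surjective''.
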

	\begin{proof}
		Consider the commutative diagram
		\[ 
		\begin{tikzcd}
			{\Tate{A}{n}} \arrow[r, "g"]\arrow[d, "\pi_{A}"] & {\Tate{B}{n}}\arrow[d, "\pi_{B}"]\\
			A\left\langle \dfrac{t_{1},\dots, t_{n}}{s}\right\rangle \arrow[r, "h"] & B\left\langle \dfrac{f(t_{1}),\dots, f(t_{n})}{f(s)} \right\rangle,\\
		\end{tikzcd}
		 \]
		 where the projection $\pi_{A}$ is the quotient by the ideal $\overline{(t_{1}-sX_{1},\dots, t_{n}-sX_{n})}$, the projection $\pi_{B}$ is the quotient by the ideal $\overline{(f(t_{1})-f(s)X_{1},\dots, f(t_{n})-f(s)X_{n})}$. It is indeed a commutative diagram because of the universal properties, or one can apply lemma \ref{image of the closure is the closure of the image}. By lemma \ref{open surjectivity is preserved by Tate extensions}, the map $g$ open surjective. By lemma \ref{group action quotient}, both projections $\pi_{A}$ and $\pi_{B}$ are open surjective. Hence, $h$ is open surjective.
		 
		 If we started with a quotient map of Huber \emph{pairs} $f\colon \pair{A}\to \pair{B}$, in the commutative square above topmost arrow, as well as the vertical arrows would have been quotient maps of Huber pairs. It would follow that the bottom arrow is a quotient map of Huber \emph{pairs}.
	\end{proof}

	\subsection{LOCT closed immersions}
	
	Here we define LOCT (locally on the target) closed immersions and prove that they satisfy COMP, i.e., that a composition of LOCT closed immersions is again a LOCT closed immersion.
		
	\begin{definition}\label{global closed immersion}
		A morphism of affinoid preadic\footnote{here it just means that we do not require our Huber rings to be sheafy.} spaces $f\colon \spa{B} \to \spa{A}$ is called a \emph{global closed immersion} if the induced map of Huber pairs $\pair{A}\to\pair{B}$ is a quotient map of Huber pairs.
	\end{definition}
	
	\begin{definition}
		Let $\pair{A}$, $\pair{B}$ be Huber pairs. A morphism of affinoid preadic spaces $f\colon \spa{B} \to \spa{A}$ is called a \emph{LOCT closed immersion} if there exists a cover $\bigcup_{i\in I}U_{i} = \spa{A}$ of $\spa{A}$ by open affinoids $U_{i} \cong \spa{C_{i}}$ such that for all $i\in I$ the preimage $f^{-1}(U_{i})\footnote{we really mean pullback but abuse notation. For the preimage of topological spaces we will use $|f|^{-1}.$} \cong \spa{D_{i}}$ is also open affinoid and the corresponding map of Huber pairs $\pair{C_{i}}\to\pair{D_{i}}$ is a quotient map.
	\end{definition}
	
	\begin{remark}
		It follows from the definition that the property of a map being LOCT closed immersion is local on the target (i.e., it satisfies LOCT).
	\end{remark}
	
	\begin{lemma}\label{quotient map of Huber pairs induces a topological closed immerion of spectra}
		Let $f\colon\pair{A}\to\pair{B}$ be a quotient map of Huber pairs, i.e., $\pair{B} \cong \pair{A/I}$ for some closed ideal $I\subset A$. Then the corresponding map on spectra $ \spa{A/I}\to \spa{A}$ is a topological closed immersion, with image $\left\lbrace v\in\spa{A}\mid \ker v \supset I\right\rbrace $.
	\end{lemma}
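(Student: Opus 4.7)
The plan is to split the claim into three parts: construct the map on spectra explicitly, show it is a topological embedding, and show its image is closed. First, I define $\varphi\colon\spa{A/I}\to\spa{A}$ by $\varphi(w) := w\circ\pi$, where $\pi\colon A\to A/I$ is the quotient. This is a well-defined continuous valuation with $\varphi(w)(A^+)\leqslant 1$ because $\pi(A^+)\subset B^+$. The image visibly lies in $\left\lbrace v\in\spa{A} \mid \ker v\supset I\right\rbrace $; conversely, any such $v$ factors uniquely through $A/I$ as a valuation $w$, which is continuous by the universal property of the quotient topology and satisfies $w(B^+)\leqslant 1$ since $w$ bounds $A^+/(A^+\cap I)$ and hence, via monic integral dependence relations, its integral closure $B^+ = (A^+/(A^+\cap I))^{int}$. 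Injectivity of $\varphi$ is immediate from surjectivity of $\pi$.

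Second, I verify that $\varphi$ is a homeomorphism onto its image by exhibiting every rational open of $\spa{A/I}$ as the $\varphi$-preimage of a rational open of $\spa{A}$. Continuity is immediate: a rational open $\left\lbrace v \mid |t_i(v)|\leqslant|s(v)|\ne 0\right\rbrace \subset\spa{A}$ pulls back to $\left\lbrace w \mid |\bar t_i(w)|\leqslant|\bar s(w)|\ne 0\right\rbrace \subset\spa{A/I}$. For the converse direction, start with a rational open $U = \left\lbrace w \mid |\bar t_i(w)|\leqslant|\bar s(w)|\ne 0\right\rbrace $ in $\spa{A/I}$ and lift $\bar t_i,\bar s$ to $t_i,s\in A$. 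The identity $1 = \sum a_i\bar t_i$ in $A/I$ lifts to $1 = \sum a_it_i+e$ for some $e\in I$, so $\left\lbrace t_1,\dots,t_n,e\right\rbrace $ generates the unit ideal in $A$, and the rational open $V = \left\lbrace v \mid |t_i(v)|\leqslant|s(v)|,\,|e(v)|\leqslant|s(v)|,\,|s(v)|\ne 0\right\rbrace \subset\spa{A}$ satisfies $\varphi^{-1}(V) = U$, since $\pi(e) = 0$ trivializes the $e$-condition over $\spa{A/I}$. I expect this lifting step to be the main obstacle: the naive lifts of $\bar t_i$ need not generate the unit ideal in $A$, and introducing the correction $e\in I$ is precisely what repairs this.

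Finally, to conclude that the image is closed, I show that $\left\lbrace v\in\spa{A}\mid v(a)=0\right\rbrace $ is closed for every $a\in A$. Given $v$ with $v(a)\ne 0$, continuity of $v$ in the Tate ring $A$ provides, for a pseudouniformizer $\varpi\in A$, an integer $n$ with $v(\varpi^n) < v(a)$; since $\varpi^n$ is a unit, $\left\lbrace \varpi^n, a\right\rbrace $ generates the unit ideal, and the rational open $\left\lbrace w \mid |\varpi^n(w)|\leqslant|a(w)|\ne 0\right\rbrace $ is an open neighborhood of $v$ contained in $\left\lbrace w\mid w(a)\ne 0\right\rbrace $. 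Hence $\left\lbrace v\mid v(a)\ne 0\right\rbrace $ is a union of rational opens, so open, and the image $\bigcap_{a\in I}\left\lbrace v\mid v(a)=0\right\rbrace $ is closed. Together these three steps realize $\spa{A/I}\to\spa{A}$ as a homeomorphism onto the closed subset $\left\lbrace v\in\spa{A}\mid \ker v\supset I\right\rbrace $, i.e.\ a topological closed immersion.
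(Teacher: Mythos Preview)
Your proof is correct and complete. The paper itself does not provide a proof of this lemma beyond the phrase ``This is clear'', so there is no approach to compare against; your argument simply spells out in full the details that the paper leaves to the reader. All three parts are sound: the identification of the image via the universal property of the quotient together with the integral-closure argument for $w(B^{+})\leqslant 1$, the lifting trick with the correction term $e\in I$ to exhibit rational opens of $\spa{A/I}$ as preimages of rational opens of $\spa{A}$, and the closedness argument using a pseudouniformizer (which is legitimate under the paper's standing Tate assumption). One minor remark: in the last step, taking $T=\{\varpi^{n}\}$ alone already generates the unit ideal, so there is no need to include $a$ in $T$; but this is harmless.
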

	\begin{proof}
		This is clear.
	\end{proof}
	
	\begin{lemma}\label{preimages in topological closed immerions}
		Let $\pair{A}$, $\pair{B}$ be Huber pairs and let $f\colon \spa{B} \to \spa{A}$ be a LOCT closed immersion. Then the underlying map of topological spaces $|f|\colon |\spa{B}|\to|\spa{A}|$ is a topological closed immersion, and for every point $x\in|\spa{B}|$ with an open neighbouhood $x\in V \subset |\spa{B}|$ there exists an open neighbourhood $f(x)\in U \subset|\spa{A}|$ whose preimage $|f|^{-1}(U)$ is contained in $V$.
	\end{lemma}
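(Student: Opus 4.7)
The plan is to bootstrap from the local situation to the global one. By the definition of a LOCT closed immersion there is an open cover $\spa{A} = \bigcup_{i \in I} U_i$ by open affinoids $U_i \cong \spa{C_i}$ such that each preimage $|f|^{-1}(U_i) \cong \spa{D_i}$ is open affinoid and the induced map $\pair{C_i} \to \pair{D_i}$ is a quotient of Huber pairs. Lemma \ref{quotient map of Huber pairs induces a topological closed immerion of spectra} then yields that each restriction $|f|_i\colon \spa{D_i} \to U_i$ is a topological closed immersion. My strategy is to verify each part of the conclusion by patching these local closed immersions along the cover.

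First I would show that $|f|$ is injective: if $f(x_1) = f(x_2) = y$, pick any $U_i$ containing $y$; then both $x_1, x_2$ lie in $|f|^{-1}(U_i) = \spa{D_i}$, and injectivity of $|f|_i$ forces $x_1 = x_2$. Next, for the image $Y := |f|(|\spa{B}|)$ being closed in $|\spa{A}|$, I would observe that $Y \cap U_i = |f|(\spa{D_i})$, which is closed in $U_i$ by the local lemma; since $\{U_i\}$ is an open cover, local closedness implies global closedness, so $Y$ is closed. It then remains to check that $|f|$ is a homeomorphism onto $Y$: given any open $V \subset |\spa{B}|$, I would write $|f|(V) = \bigcup_i |f|_i(V \cap \spa{D_i})$, and each summand has the form $W_i \cap Y \cap U_i$ for some $W_i$ open in $U_i$ by the local homeomorphism-onto-image property. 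Since $U_i$ is open in $|\spa{A}|$, the $W_i$ themselves are open in $|\spa{A}|$, and setting $W := \bigcup_i W_i$ gives $|f|(V) = W \cap Y$, which is open in $Y$.

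For the second assertion, given $x \in V$ with $V$ open in $|\spa{B}|$, I would apply the just-established homeomorphism-onto-image property to find an open $U \subset |\spa{A}|$ with $|f|(V) = U \cap Y$. Injectivity then forces $|f|^{-1}(U) = V \subset V$, and since $f(x) \in |f|(V) \subset U$, the point $f(x)$ lies in $U$ as required.

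The entire argument is a bookkeeping exercise in the local-to-global passage, the only nontrivial input being the already-established Lemma \ref{quotient map of Huber pairs induces a topological closed immerion of spectra}. The main subtlety is making sure that the locally constructed sets $W_i$ are open not merely in $U_i$ but in $|\spa{A}|$, but because $U_i$ is itself open in $|\spa{A}|$ this causes no trouble; in particular no hypothesis on the $C_i$ or $D_i$ beyond the mere fact that the maps $\pair{C_i} \to \pair{D_i}$ are quotient maps is required.
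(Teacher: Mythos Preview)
Your proof is correct and follows essentially the same line as the paper's. For the first assertion, the paper simply invokes the general fact that the property of being a topological closed immersion is local on the target and appeals to Lemma~\ref{quotient map of Huber pairs induces a topological closed immerion of spectra}; you instead unfold that local-to-global passage by hand (injectivity, closedness of the image, openness of $|f|(V)$ in $Y$), which amounts to reproving that standard fact. For the second assertion, the paper argues via complements, setting $U := (|f|(V^{c}))^{c}$ and using that a closed embedding sends closed sets to closed sets, while you use the homeomorphism-onto-image to find an open $U$ with $U\cap Y = |f|(V)$ and then observe $|f|^{-1}(U) = V$; these are equivalent manoeuvres, and yours even yields equality rather than mere containment.
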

	\begin{proof}
		Since the property of being a topological closed immersion satisfies LOCT, we deduce that $|f|$ is a topological closed immersion from Lemma \ref{quotient map of Huber pairs induces a topological closed immerion of spectra}. The rest of the statement is true for any topological closed immersion. Indeed, if $x\in V \subset |\spa{B}|$ is an open set, then its complement $V^{c}\subset |\spa{B}|$ is closed, as well as its image $|f|(V^{c})$, so we can take $U$ to be the complement of the image $U:= (|f|(V^{c}))^{c}$.
	\end{proof}
	
	\begin{proposition}\label{LOCT closed immersion satisfies COMP}
		The property of a morphism of affinoid preadic spaces of being a LOCT closed immersion satisfies COMP.
	\end{proposition}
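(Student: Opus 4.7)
The plan is, for each $z \in \spa{A}$, to exhibit a rational subset $W \ni z$ of $\spa{A}$ such that $(f\circ g)^{-1}(W)$ is open affinoid in $\spa{C}$ and $\pair{A_W} \to \pair{C_{(f\circ g)^{-1}(W)}}$ is a quotient map of Huber pairs, where $f \colon \spa{B} \to \spa{A}$ and $g \colon \spa{C} \to \spa{B}$ denote the two LOCT closed immersions whose composition is in question. My first move is to refine the given LOCT covers $\{U_\alpha\}$ of $\spa{A}$ and $\{V_\beta\}$ of $\spa{B}$ so that they consist of rational subsets: for each point of $\spa{A}$ we can, by the basis property of rational subsets, find a rational subset of $\spa{A}$ containing it and contained in some $U_\alpha$, and Proposition \ref{open surjectivity is preserved by rational localizations} guarantees that the quotient map persists after rational localization. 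The same refinement works for $g$.

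Fix $z \in \spa{A}$. If $z \notin (f\circ g)(\spa{C})$, which is closed as the image under a composition of topological closed immersions (Lemma \ref{quotient map of Huber pairs induces a topological closed immerion of spectra}), any rational $W \ni z$ disjoint from the image suffices, as $(f\circ g)^{-1}(W) = \emptyset \cong \spa{0}$. Otherwise write $z = f(y)$ with $y = g(x)$ for some $x \in \spa{C}$, and pick $U \ni z$ and $V \ni y$ from the rational refined covers. The set $V \cap f^{-1}(U)$ is an open neighborhood of $y$ in $f^{-1}(U)$, and the restricted map $f|_{f^{-1}(U)} \colon \spa{B_{f^{-1}(U)}} \to U$ is a global closed immersion. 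Applying Lemma \ref{preimages in topological closed immerions} to this restriction gives an open $M \subset U$ with $z \in M$ and $f^{-1}(M) \subset V$. Now pick any rational subset $W$ of $\spa{A}$ with $z \in W \subset M$.

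Since $W$ is rational in $\spa{A}$ and contained in the rational subset $U$, transitivity of rational localization makes $W$ rational in $U$; likewise $f^{-1}(W) \subset V$ is rational in $\spa{B}$ and hence in $V$. Two applications of Proposition \ref{open surjectivity is preserved by rational localizations}---first to $\pair{A_U} \to \pair{B_{f^{-1}(U)}}$ localized at $W \subset U$, then to $\pair{B_V} \to \pair{C_{g^{-1}(V)}}$ localized at $f^{-1}(W) \subset V$---produce quotient maps $\pair{A_W} \to \pair{B_{f^{-1}(W)}}$ and $\pair{B_{f^{-1}(W)}} \to \pair{C_{(f\circ g)^{-1}(W)}}$. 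Composing them yields the required quotient map, using that the integral-closure condition composes because $(\psi\phi(A^+))^{\mathrm{int}} = \psi(\phi(A^+)^{\mathrm{int}})^{\mathrm{int}}$ by mutual containment, while open surjectivity is preserved by composition trivially.

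The main obstacle is the alignment step: producing a single rational $W$ in $\spa{A}$ whose $f$-preimage sits inside a single element $V$ of the LOCT cover of $g$, so that both LOCT structures can be invoked at $W$ simultaneously. Lemma \ref{preimages in topological closed immerions} is the essential tool here, translating the openness of $V$ around $y$ upstairs in $\spa{B}$ into the openness of some $M$ around $z$ downstairs in $\spa{A}$, which then permits a suitable rational refinement.
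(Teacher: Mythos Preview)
Your proof is correct and follows essentially the same route as the paper's: both arguments use Lemma~\ref{preimages in topological closed immerions} applied to the outer closed immersion to align a member of the inner LOCT cover with a rational open of the target, then invoke Proposition~\ref{open surjectivity is preserved by rational localizations} twice to localize each quotient map before composing. Your version differs only cosmetically---you refine the covers to rational subsets at the outset and treat the ``$z$ not in the image'' case explicitly, whereas the paper starts from a point in the domain and leaves that trivial case implicit.
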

	\begin{proof}
		Let $\pair{A}$, $\pair{B}, \pair{C}$ be Huber pairs. Let $f\colon \spa{A} \to \spa{B}$ and \newline$g\colon \spa{B} \to \spa{C}$ be LOCT closed immerisons. We will show that $g\circ f$ is LOCT closed immersion. For this, let $x\in \spa{A}$ be an arbitrary point, and let $f(x)\in\spa{D}\subset \spa{B}$ be an open affinoid whose preimage $f^{-1}(\spa{D}) = \spa{D^{\prime}}$ under $f$ is also open affinoid with the corresponding map of Huber pairs being a quotient map. Let $g(f(x))\in\spa{E}\subset \spa{C}$  be an open affinoid whose preimage $g^{-1}(\spa{E}) = \spa{E^{\prime}}$ under $g$ is also open affinoid with the corresponding map of Huber pairs being a quotient map. \newline Let $f(x)\in V\subset \spa{D}\cap\spa{E^{\prime}}$ be an open affinoid that is a rational localization of both $ \spa{D}$ and $\spa{E^{\prime}}$ (simultaneously). In particular, $g$ restricts to a morphism of affinoid preadic spaces $g|_{V}\colon V \to \spa{E}$. By Lemma \ref{preimages in topological closed immerions} we can find an open neighbourhood $g(f(x))\in U\subset \spa{E}$ with its preimage $g^{-1}(U)$ contained in $V$. Consequently, we can find a rational localization $W\subset U$ of $\spa{E}$, so that its preimage $g^{-1}(W)$ is a rational localization of $V$. Therefore, $g^{-1}(W)$ is again a rational localization of both $ \spa{D}$ and $\spa{E^{\prime}}$ (simultaneously). If we apply Lemma \ref{open surjectivity is preserved by rational localizations} to the morphism $g^{-1}(W)\to W$ (which is a rational localization of the morpihsm $\spa{E^{\prime}}\to\spa{E}$), we obtain that the corresponding map of Huber pairs is a quotient map. At the same time, the preimage $f^{-1}(g^{-1}(W)) = [g\circ f]^{-1}(W)$ of $g^{-1}(W)$ under $f$ is a rational localization of $\spa{D^{\prime}}$. If we apply Lemma \ref{open surjectivity is preserved by rational localizations} to the morphism $f^{-1}(g^{-1}(W))\to g^{-1}(W)$ (which is a rational localization of the morpihsm $\spa{D^{\prime}}\to\spa{D}$), we obtain that the corresponding map of Huber pairs is a quotient map. Therefore, the composition $[g\circ f]^{-1}(W) \to W$ corresponds to a quotient map of Huber pairs, and we are done.
	\end{proof}
	
	\section{Proof of the main proposition}\label{section proof of the main proposition}
	
	\subsection{Global closed immersion}\label{subsection global closed immersion}
	
	In this section we prove Proposition \ref{main proposition}. Let us recall the setup. Let $g\colon\spa{B}\to\spa{A}$ be an \'etale morphism of sous-perfectiod adic spaces. We want to show that there exists a closed immersion $f\colon\spa{B}\to\spa{\Tate{A}{M}}$ such that the diagram 
	\[ 
	\begin{tikzcd}
		& \spa{\Tate{A}{M}}\arrow[d, "\pi"]\\
		\spa{B}\arrow[ur, "f"]\arrow[r, "g"] & \spa{A}
	\end{tikzcd}
	 \]
	commutes and such that the natural map $\mathcal{I}/\mathcal{I}^{2}\to f^{*}\Omega_{\mathbb{B}^{d}_{A}/\Spa{A}}$ is an isomorphism, where $\mathcal{I}$ is the ideal sheaf of the closed immersion $f$. 
	
	We will apply the following proof strategy. As a first step, we will prove the following proposition. 
	\begin{proposition}\label{global factorization}
		Let $g\colon\Spa{B}\to\Spa{A}$ be an smooth map of sous-perfectoid spaces. Then it factors through a closed immersion $f$
		\[\begin{tikzcd}
			\ph&\Spa{\Tate{A}{d}}\arrow[d, "\pi"]\\
			\Spa{B}\arrow[r, "g"]\arrow[ur, "f"]&\Spa{A}
		\end{tikzcd}.\]
	\end{proposition}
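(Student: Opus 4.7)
The plan is to use the local structure of smooth maps to reduce to a finite affinoid cover of $\Spa{B}$ whose pieces admit local closed immersions into balls over $\Spa{A}$, and then to globalize these into a single closed immersion. The main obstacle will be the globalization step.

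First I unpack smoothness. By Definition \ref{smooth map} iii), $\Spa{B}$ admits an affinoid cover $\{V_i = \Spa{B_i}\}$ with étale maps $V_i \to \mathbb{B}_A^{d_i}$ over $\Spa{A}$. Refining using Definition \ref{smooth map} ii) and i), each such étale map factors, on a smaller affinoid open, as an open immersion into some affinoid $\Spa{C}$ followed by a finite étale map $\Spa{C} \to \mathbb{B}_A^{d_i}$. A finite étale $A\langle X_1, \dots, X_{d_i}\rangle$-algebra $C$ is finite, hence topologically finitely generated, so a choice of module generators gives a global closed immersion $\Spa{C} \hookrightarrow \mathbb{B}_A^{d_i + m}$ for some $m$. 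The open immersion part is a rational localization, hence itself a quotient of a Tate algebra by Proposition \ref{rational localization}. By Proposition \ref{LOCT closed immersion satisfies COMP}, the composition gives a LOCT (indeed, global) closed immersion of each refined-cover piece into some ball over $\Spa{A}$. Using quasi-compactness of $\Spa{B}$, I take the cover finite: $\Spa{B} = V_1 \cup \dots \cup V_n$ with global closed immersions $f_\alpha : V_\alpha \hookrightarrow \mathbb{B}_A^{D_\alpha}$.

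The main obstacle is to combine the $f_\alpha$ into a single global closed immersion $f : \Spa{B} \hookrightarrow \mathbb{B}_A^D$. My approach is to extract from each $f_\alpha$ its coordinate functions $x_{\alpha, k} \in B_\alpha^+$ and to lift them to global sections $b_{\alpha, k} \in B^+$. Setting $D = \sum_\alpha D_\alpha$, the collection $(b_{\alpha, k})$ defines a continuous $A$-algebra map $\phi : A\langle X_1, \dots, X_D\rangle \to B$. By construction, each composition of $\phi$ with the rational localization $B \to B_\alpha$ is open surjective, using Proposition \ref{open surjectivity is preserved by rational localizations} and the fact that $f_\alpha$ is a quotient. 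I then aim to deduce open surjectivity of $\phi$ itself by a sheaf-theoretic descent argument across the rational cover, which would yield the desired global closed immersion.

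The hard part will be producing the global lifts $b_{\alpha, k}$: the natural restriction $B^+ \to B_\alpha^+$ has only dense image, so extending each $x_{\alpha, k}$ to a global section requires an approximation argument that simultaneously controls behavior on the other $V_\beta$, potentially iterating and enlarging $D$ to absorb error terms. Here the sous-perfectoid hypothesis enters crucially through Tate acyclicity of the structure sheaf. The descent step for open surjectivity is an auxiliary technical statement I would prove along the way, possibly invoking the pseudocoherent sheaf theory of Kedlaya--Liu alluded to in the introduction.
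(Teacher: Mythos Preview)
Your overall architecture is right---build a LOCT closed immersion from local data and then globalize---but the specific lifting step you propose has a genuine gap. You want to lift the local coordinates $x_{\alpha,k}\in B_\alpha^+$ to elements of $B^+$, and you note that $B^+\to B_\alpha^+$ has only dense image. In fact it does not even have dense image: for a rational localization $B_\alpha=B\langle f_1,\dots,f_n/f_\alpha\rangle$, the ring $B_\alpha^+$ is (the completion of) the integral closure of $B^+[f_1/f_\alpha,\dots,f_n/f_\alpha]$, and the new fractions $f_j/f_\alpha$ typically do not lie in the closure of the image of $B^+$. So no approximation argument in $B^+$ alone will reach the $x_{\alpha,k}$, and your map $\phi$ built from approximate lifts need not become surjective after localization.

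The paper avoids lifting altogether. Using Huber's perturbation lemma (Lemma~\ref{Huber's lemma}) one first arranges that the local generators $\varphi_i(X_{i,j})$ lie in the \emph{uncompleted} localization $B(f_1,\dots,f_n/f_i)^+$. Since this $+$-ring sits inside $B^+[1/f_i]$, each generator can be written as a fraction $v_{i,j}/f_i^{\,l}$ with a \emph{global} numerator $v_{i,j}\in B^+$. The map $\psi$ into $B$ is then built from these numerators $v_{i,j}$, the cover elements $f_d$, elements $g_e$ witnessing that the $f_d$ generate the unit ideal, and---crucially---the coefficients $p_{\gamma,u}\in B^+$ of monic polynomials witnessing that $v_{i,j}/f_i^{\,l}$ is integral over $B^+[f_1/f_i,\dots,f_n/f_i]$. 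These integrality witnesses are exactly what guarantees that after localizing at $f_i$ the resulting map of Huber pairs lands in the correct $+$-ring and dominates the original quotient $\varphi_i$, hence is itself a quotient map. This is the content of Lemma~\ref{main lemma}.

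For the globalization step, you propose descending open surjectivity directly across the rational cover. The paper does not attempt this; instead it observes that $\psi$ is a LOCT closed immersion between spaces smooth over $\Spa{A}$ and invokes \cite[Proposition~IV.4.19]{FS} as a black box (which indeed rests on Kedlaya--Liu pseudocoherence) to upgrade LOCT to global. The smoothness of both source and target over the base is essential input here, so your remark that the sous-perfectoid hypothesis enters via sheaf theory is correct, but it enters through this proposition rather than through a bespoke descent of surjectivity.
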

	
	In other words, we first show that such a factorization even exists (more generally for smooth maps).
	
	We will construct $f$ in such a way that it would factor through a closed immersion $q\colon \spa{D}\to \spa{\Tate{A}{M}}$ followed by a LOCT closed immersion $\sigma \colon\spa{B}\to\spa{D}$:
	\[ 
	\begin{tikzcd}
		& \spa{\Tate{A}{M}}\arrow[d, "\pi"] & \spa{D}\arrow[l, "q"']\\
		\spa{B}\arrow[ur, "f=q\circ\sigma"]\arrow[r, "g"]\arrow[urr, "\sigma" {xshift=70pt}] & \spa{A}. &
	\end{tikzcd}
	\]
	Then $f:=q\circ\sigma$ is a LOCT closed immersion by Lemma \ref{LOCT closed immersion satisfies COMP}. We will deduce that $f$ is a (global) closed immersion by applying \cite[Proposition IV.4.19]{FS}. That would finish the proof of Proposition \ref{global factorization}.
	
	\begin{remark}\label{local implies global remark}
		Notice that ``LOCT closed immersion implies global closed immersion" is an example of a ``local-implies-global" statement, and such statements tend to be tricky in the geometry of adic spaces. For example, whether or not the property of a Huber ring $A$ of being perfectoid (or sous-perfectoid) is local on $\mathrm{Spa}(A, A^{+})$ -- is an open problem (see \cite[Remark 3.19]{HK} and \cite[Remark 8.16]{HK} for a more precise statement). The statement that we are essentially proving is that locally finite type morphisms are globally finite type. In \cite{Hu96}, such statement is proven, and the proof is rather delicate, relying on the existence of ideal sheaves of closed immersions of adic spaces. This only works in the noetherian setting. The proof that such a statement works for \emph{us} really uses the special setting of smooth maps and the gluing result of pseudocoherent sheaves by Kedlaya-Liu from \cite{KL16}.
	\end{remark}
	
	As a second step in proving Proposition \ref{main proposition}, we will have to show that in case $g$ is \'etale, the natural map $\mathcal{I}/\mathcal{I}^{2}\to f^{*}\Omega_{\mathbb{B}^{d}_{A}/\Spa{A}}$ is an isomorphism, where $\mathcal{I}$ is the ideal sheaf of the closed immersion $f$. In fact, this readily follows from the proof of  \cite[Proposition IV.4.19]{FS}. However, it is in our plans to give an exposition of the proof of  \cite[Proposition IV.4.19]{FS}, explaining how it yields our second step on the way. To give a little more detail already right now, the isomorphism follows from the right exact sequence of the sheaves of differentials associated to a closed immersion. It is therefore important to establish the existence of this right exact sequence and explain how it works!

	The rest of this section is concerned with the proof of Proposition \ref{global factorization}, i.e., with the construction of $f$.
	
	We notice that the smooth morphism $g$ is a morphism locally of topologically finite type. Indeed, locally on the source, $g$ is a composition of an \'etale morphism followed by a projection from a ball. \'Etale morphisms, in turn, are locally finite \'etale followed by a rational localization. Each of the three is locally of topologically finite type, and so is their composition. Now, \cite[Proposition 3.8.15. ]{Hu93} asserts that this conclusion of $g$ being \emph{globally} of topologically finite type can be reached in the noetherian setting, and it relies on the existence of a good theory of closed immersions with respect to ideal sheaves. We rewrite Huber's proof of this proposition almost verbatim up until the moment where noetherian assumptions are used. This would correspond to constructing a morpihsm $f$ which is not a closed immerision, but rather a \emph{LOCT} closed immersion only. According to our strategy, this should be enough. What follows is Huber's proof\footnote{translated from German and simplified for the case of Tate rings.} of \cite[Proposition 3.8.15. ]{Hu93}, starting from the following lemma.
	
	\begin{lemma}[This is the essence of Proposition \ref{global factorization}]\label{main lemma}
		In the context of what is written right after Proposition \ref{global factorization}, one can find morphisms $f$, $q$, $\sigma$ as described in the proof strategy.
	\end{lemma}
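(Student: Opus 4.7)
The plan is to adapt Huber's argument from the proof of \cite[Proposition 3.8.15]{Hu93} up to, but not including, the noetherian step, so that the output is a LOCT (rather than global) closed immersion. First, I unpack Definition \ref{smooth map}: every $y \in \spa{B}$ has a neighbourhood on which $g$ is (after shrinking) a rational localization, followed by a finite \'etale map, followed by a projection from a relative ball. Using quasi-compactness of $\spa{B}$, I extract a finite cover $\spa{B} = V_1 \cup \dots \cup V_k$ by rational opens $V_i = \spa{B_i}$ defined by rational data $(T_i; s_i) \subset B$, where each $g|_{V_i}$ fits into
\[ V_i \hookrightarrow \spa{C_i} \longrightarrow \spa{\Tate{A}{d_i}} \longrightarrow \spa{A}, \]
with $V_i \hookrightarrow \spa{C_i}$ a rational localization (via data $(\tilde T_i; \tilde s_i) \subset C_i$) and $C_i$ finite over $\Tate{A}{d_i}$ with module generators $y_{i,1}, \dots, y_{i,n_i}$.

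Next, each local factorization supplies finitely many topological generators of $B_i$ over $A$: the $d_i$ ball coordinates, the module generators $y_{i,j}$, and the rational fractions $\tilde t_{i,j}/\tilde s_i$ coming from $B_i \cong C_i\langle \tilde T_i/\tilde s_i\rangle$. Assembled, these give a quotient map of Huber pairs $\Tate{A}{N_i} \twoheadrightarrow B_i$. I set $M$ large enough to accommodate, across all $i$, these local generators (replaced by approximations in $B$ where necessary) together with the global cover data $T_i, s_i \in B$; specifically, for each local-only generator $y_{i,j} \in C_i$, I include a global element $\tilde y_{i,j} \in B$ such that $\tilde y_{i,j}/s_i^{N_{i,j}}$ approximates $y_{i,j}$ in $B_i$ to sufficient precision in the Tate topology. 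I then define $\varphi\colon \Tate{A}{M} \to B$ by $X_j \mapsto b_j$, and set $D := \Tate{A}{M}/\overline{\ker\varphi}$ with $D^{+}$ the integral closure of the image of $A^{+}[X_1, \dots, X_M]$; by Lemma \ref{quotient pair}, $\pair{D}$ is a Huber pair and $q\colon \spa{D} \to \spa{\Tate{A}{M}}$ is a global closed immersion by construction.

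For the LOCT-ness of the induced $\sigma\colon \spa{B} \to \spa{D}$, I take the cover of $\spa{D}$ by rational opens $U_i$ obtained by pulling back along $q$ the rational opens of $\spa{\Tate{A}{M}}$ defined by the data $(T_i; s_i)$; their preimages under $\sigma$ recover $V_i = \spa{B_i}$. By Proposition \ref{open surjectivity is preserved by rational localizations} applied to the quotient map $\Tate{A}{M} \twoheadrightarrow D$, the localized maps $\Tate{A}{M}\langle T_i/s_i\rangle \twoheadrightarrow D\langle T_i/s_i\rangle$ remain quotient maps of Huber pairs. By the choice of $b_j$'s, the induced $\Tate{A}{M}\langle T_i/s_i\rangle \to B_i$ has image containing all local generators of $B_i$ (including the $y_{i,j}$, recovered from $\tilde y_{i,j}/s_i^{N_{i,j}}$ via Tate-topology limits in the complete algebra $B\langle T_i/s_i\rangle$), hence the composition $D\langle T_i/s_i\rangle \to B_i$ is a quotient map of Huber pairs. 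Therefore $\sigma$ is a LOCT closed immersion, and $f := q\circ\sigma$ is a LOCT closed immersion by Proposition \ref{LOCT closed immersion satisfies COMP}. The main obstacle lies in the second paragraph: in Huber's noetherian setting, coherent ideal-sheaf arguments secure global lifts of the local generators, whereas here we must construct global approximations $\tilde y_{i,j} \in B$ of the local-only generators $y_{i,j} \in C_i$ and verify that they suffice after rational localization---precisely the place where the sous-perfectoid hypothesis (ensuring good behaviour of Tate algebras) and the explicit Tate-algebra structure $B_i = B\langle T_i/s_i\rangle$ are indispensable.
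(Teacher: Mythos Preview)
Your outline follows Huber's strategy in spirit, but two steps are genuinely incomplete and correspond exactly to the places where the paper does nontrivial work.

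\textbf{The approximation step.} You choose a single global $\tilde y_{i,j}\in B$ with $\tilde y_{i,j}/s_i^{N_{i,j}}$ close to $y_{i,j}$ in $B_i$, and then claim that $y_{i,j}$ is ``recovered via Tate-topology limits''. One approximation is not a limit; the image of your map $\Tate{A}{M}\langle T_i/s_i\rangle\to B_i$ contains $\tilde y_{i,j}/s_i^{N_{i,j}}$, not $y_{i,j}$. What is actually needed is the perturbation Lemma~\ref{Huber's lemma}: one first \emph{changes} the local quotient map $\varphi_i\colon A\langle X_{i,1},\dots,X_{i,m}\rangle\to B_i$ by precomposing with a quotient self-map so that each $\varphi_i(X_{i,j})$ lands in the uncompleted localization $B(T_i/s_i)\cap B_i^{+}\subset B^{+}[1/s_i]$ and can therefore be written as $v_{i,j}/s_i^{l}$ with $v_{i,j}\in B^{+}$ a genuine global element. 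After this modification no residual ``limit'' is required: the new $\varphi_i$ is still a quotient map and its targets are now exact fractions of global elements.

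\textbf{The Huber-pair (``+'') condition.} Even granting surjectivity, your sentence ``hence the composition $D\langle T_i/s_i\rangle\to B_i$ is a quotient map of Huber pairs'' is unjustified: you have said nothing about why $B_i^{+}$ is the integral closure of the image of $D_i^{+}$. This is the heart of the matter. The paper's device is to factor the known quotient map $\varphi_i$ through $\sigma_i$ by defining a map of \emph{Huber pairs} $\mu\colon A\langle X_{i,1},\dots,X_{i,m}\rangle\to E:=D_i/\ker\sigma_i$ with $\mu(X_{i,j})=\lambda(\overline{v_{i,j}}/\overline{f_i}^{l})$. For $\mu$ to be a map of Huber pairs one must know that $\lambda(\overline{v_{i,j}}/\overline{f_i}^{l})\in E^{+}$, and there is no a priori reason for this: the fraction $\overline{v_{i,j}}/\overline{f_i}^{l}$ need not lie in $D_i^{+}$. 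The paper forces it by adjoining further variables $P_{\gamma,u}$ (and $G_e$) to $\Tate{A}{M}$, mapping to the coefficients of a monic integrality relation satisfied by $v_{i,j}/f_i^{l}$ over $B^{+}[f_1/f_i,\dots,f_n/f_i]$. Because these coefficients are now images of variables, the same monic relation holds in $E$ with coefficients in $E^{+}$, so integral closedness of $E^{+}$ gives $\mu(X_{i,j})\in E^{+}$. Your variable list carries no analogue of the $P_{\gamma,u}$, so nothing in your construction witnesses this integrality, and the quotient-of-Huber-pairs conclusion does not follow.

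In short, your sketch identifies the right objects ($D$, the standard cover, the localized maps) but omits precisely the two mechanisms---Lemma~\ref{Huber's lemma} and the adjunction of integrality-witness variables---that make the argument go through without noetherian hypotheses.
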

	
	We will need a lemma.
	
	\begin{lemma}[\cite{Hu93}, Lemma 3.8.16.]\label{Huber's lemma}
		Let $\pair{A}$ be a (complete) Huber pair. There exists a neighbourhood $U\subset \Tate{A}{n}$ of $0$ such that for any $n$-tuple $(x_{1}, x_{2},\dots, x_{n})$ with $x_{i}\in X_{i}+U$ the map of Huber pairs \[ f\colon\pair{\Tate{A}{n}}\to\pair{\Tate{A}{n}} \]
		given by $f(X_{i}) = x_{i}$ is a quotient map of Huber pairs.
	\end{lemma}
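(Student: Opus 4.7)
The plan is to prove something stronger than quotient map, namely that for $U$ sufficiently small, $f$ is a topological isomorphism of Huber pairs; the quotient map property then falls out immediately, since an iso is open surjective and maps $R^+$ onto $R^+$. Concretely, I would fix a ring of definition $A_0 \subset A^+$ with finitely generated ideal of definition $I$, set $R := \Tate{A}{n}$, $R_0 := \Tate{A_0}{n}$, $J := I R_0$, and take $U := J$. Note $J \subset \Tate{A^+}{n} = R^+$, so any choice $x_i \in X_i + U$ gives $u_i := x_i - X_i \in J \subset R^+$, and the prescription $X_i \mapsto x_i$ extends to a well-defined continuous $A$-algebra map of Huber pairs $f$.

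To produce an inverse, I would try to solve the equations $y_i + u_i(y_1, \ldots, y_n) = X_i$ inside $R_0$ by Picard iteration: $y_i^{(0)} := X_i$, $y_i^{(k+1)} := X_i - u_i(y_1^{(k)}, \ldots, y_n^{(k)})$. The key arithmetic step is the Taylor-type inclusion
\[ v(y) - v(z) \in (y_1 - z_1, \ldots, y_n - z_n)\, R_0 \quad \text{for } v \in R_0, \ y, z \in R_0, \]
which one gets term by term from $y^{\alpha} - z^{\alpha}$. Combined with $u_i \in J = I R_0$, this gives by induction $y_i^{(k+1)} - y_i^{(k)} \in J^{k+1}$, so the sequence converges in the $J$-adic topology on the complete ring $R_0$. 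The limits $y_i \in R_0$ satisfy the fixed-point equation, and since $y_i \in R_0 \subset R^+$, there is a continuous $A$-algebra morphism of Huber pairs $g \colon \pair{R} \to \pair{R}$ with $g(X_i) = y_i$.

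Next I would verify $f \circ g = g \circ f = \mathrm{id}$. For $f \circ g$: applying $f$ to $y_i + u_i(y) = X_i$ yields $f(y_i) + u_i(f(y_1), \ldots, f(y_n)) = X_i + u_i$; but $X_i$ itself is a solution of $w_i + u_i(w) = X_i + u_i$, and both $X_i$ and $f(y_i)$ lie in $X_i + J$. The same contraction bound proves uniqueness of solutions in $X_i + J$: if $w, w'$ are two such and $w - w' \in J^k$, then $w - w' = -(u_i(w) - u_i(w')) \in I \cdot J^k = J^{k+1}$, so $w - w' \in \bigcap_k J^k = 0$ by $J$-adic separation. Hence $f(y_i) = X_i$. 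Since $g$ is also of the form ``$X_i \mapsto X_i + $ element of $J$'', the symmetric argument produces a right inverse for $g$, which forces $g \circ f = \mathrm{id}$ as well.

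Thus $f$ is a topological ring isomorphism with both $f$ and $g = f^{-1}$ mapping $R^+$ into $R^+$, whence $f(R^+) = R^+$. Open surjectivity is automatic, and $f(R^+)^{\mathrm{int}} = R^+$ since $R^+$ is integrally closed, giving the quotient map of Huber pairs. The main obstacle I anticipate is purely notational bookkeeping for the Taylor expansion and the uniqueness step in the non-Noetherian Tate setting; all the analytic content is absorbed by $J$-adic completeness and separation of $R_0$, which hold by construction.
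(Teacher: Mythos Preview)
Your proof is correct and in fact establishes more than the paper does: you show that $f$ is an \emph{isomorphism} of Huber pairs, whereas the paper only proves that $f$ is a quotient map (open surjective with the right $+$-ring). The underlying analytic engine is the same in both arguments---the observation that $f$ differs from the identity by something landing in a strictly smaller power of $J = I\,\Tate{A_0}{n}$, so a contraction/successive-approximation argument applies---but the packaging differs. The paper uses this to show $f(G_k) = G_k$ for each $G_k := \Tate{I^k}{n}$ (citing Bourbaki's successive approximations), deduces openness and surjectivity, and then checks the $+$-ring condition by hand, arguing that $A^+$, $\Tate{I}{n}$, and the $X_i$ all lie in $f(\Tate{A^+}{n})$. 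Your Picard iteration instead produces a global two-sided inverse $g$, from which openness, surjectivity, and $f(R^+) = R^+$ are all immediate; this is cleaner and yields the sharper conclusion.

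Two small remarks. First, once you have the fixed-point equation $y_i + u_i(y_1,\dots,y_n) = X_i$, the identity $g \circ f = \mathrm{id}$ is immediate: applying $g$ to $f(X_i) = X_i + u_i$ gives $g(f(X_i)) = y_i + u_i(y) = X_i$ directly, so the detour through ``the symmetric argument produces a right inverse for $g$'' is unnecessary. Second, a minor inaccuracy: you write $\Tate{A^+}{n} = R^+$, but in the paper's conventions $R^+$ is the integral closure of $\Tate{A^+}{n}$; since $J \subset \Tate{A_0}{n} \subset \Tate{A^+}{n} \subset R^+$, your argument is unaffected.
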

	
	\begin{proof}
		Let $(A_{0}, I)$ be a pair of definition of $A$. We set $U := \Tate{I}{n}$\footnote{here $\Tate{I}{n}\subset\Tate{A}{n}$ denotes convergent power sequences with all coefficients contained in $I$, which is an open neighbourhood of $0$.}. Assume we are given an $n$-tuple $(x_{1}, x_{2},\dots, x_{n})$ with $x_{i}\in X_{i}+U$. Since $x_{i}\in \Tate{A}{n}^{+}$, we can indeed define $f$. We show that $f$ is a quotient map. Set $G_{0}:= \Tate{A_{0}}{n}$ and $G_{k} := \Tate{I^{k}}{n}$ for every positive integer $k$. Obviously, $f(G_{k})\subset G_{k}$. One can explicitly compute that for every non-negative integer $k$ and every $x\in G_{k}$ holds $x-f(x)\in G_{k+1}$. By successive approximations we conclude $f(G_{k}) = G_{k}$ (see \cite{B}, III.2.8 Cor. 2). Therefore, $f$ is an open map. Now surjectivity of $f$. We already know that $f(G_{0}) = G_{0}$, so $\Tate{A_{0}}{n}$ is in the image of $f$. Of course, $A\subset \Tate{A}{n}$ is also in the image of $f$. But the ring $\Tate{A}{n}$ is generated by $A$ and $\Tate{A_{0}}{n}$, so $f$ is surjective. Because $f$ is a map of Huber pairs, we know $f(\Tate{A}{n}^{+}) \subset \Tate{A}{n}^{+}$. Recall that $\Tate{A}{n}^{+} = (\Tate{A^{+}}{n})^{int}$, by definition. To show $f(\Tate{A}{n}^{+})^{int} = \Tate{A}{n}^{+}$, it is enough to show that $f(\Tate{A^{+}}{n})$ contains $\Tate{A^{+}}{n}$, because then $f(\Tate{A^{+}}{n}^{int}) $ contains $\Tate{A^{+}}{n}$, and hence its integral closure $f(\Tate{A^{+}}{n}^{int})^{int}$ coincides with $ \Tate{A^{+}}{n}^{int} $. We already know that $f(G_{1}) = G_{1}$, so $\Tate{I}{n}$ is cointained in $f(\Tate{A^{+}}{n})$. Of course, $ A^{+}\subset \Tate{A^{+}}{n}$ is contained in $f(\Tate{A^{+}}{n})$. Finally, each of $X_{1}, \dots, X_{n}$ is contained in $f(\Tate{A^{+}}{n})$ because $X_{i}-f(X_{i})\in G_{1}$, so there exists $y_{i}\in G_{1}$ such that $f(y_{i}) = X_{i}-f(X_{i})$, i.e., $f(y_{i}+X_{i}) = X_{i}$, where $y_{i}+X_{i} \in \Tate{A^{+}}{n}$. Since the ring $\Tate{A^{+}}{n}$ is generated by $A^{+}$, $\Tate{I}{n}$, and $X_{1},\dots X_{n}$, we are done.
	\end{proof}
	
	We start with the proof of Lemma \ref{main lemma}.
	
	\begin{proof}

	Let $g\colon\spa{B} \to \spa{A}$ correspond to the map of Huber pairs $\varphi\colon \pair{A}\to\pair{B}$. Since $g$ is locally of topologically finite type, by reduction\footnote{every rational covering of $\spa{B}$ can be refined by a \emph{standard} rational covering associated to some $f_{1},\dots, f_{n}\in B$, where $f_{1},\dots, f_{n}$ generate the unit ideal in $B$.} we know that there exist $f_{1},\dots, f_{n}\in B$ generating the unit ideal such that each Huber pair $\pair{B_{i}}$, where $B_{i}:= B\left\langle \frac{f_{1},\dots, f_{n}}{f_{i}}\right\rangle $, $i = 1, \dots, n$, is of topologically finite type over $\pair{A}$. Since we assume $B$ to be Tate, we can WLOG assume that for all $i = 1,\dots, n$ holds $f_{i}\in B^{+}$ (multiplying by an appropriate power of a pseudouniformizer). Fix a topologically nilpotent unit $\varpi\in A$. Its image $\varphi(\varpi)\in B$ is also a topologically nilpotent unit. Fix $g_{1}, \dots, g_{n} \in B^{+}$ such that $ f_{1}g_{1}+\dots+f_{n}g_{n} = \varphi(\varpi)^{\alpha}$, where $\alpha$ is some positive integer (this is possible because $f_{i}$ generate the unit ideal). Fix quotient maps of Huber pairs \[ \varphi_{i}\colon \pair{A\left\langle X_{i,1}, X_{i,2}, \dots X_{i, m}\right\rangle}\to\pair{B_{i}} \]
	(we choose $m$ large enough so that it becomes independent of $i$).
	
	Being able to precompose $\varphi_{i}$ with an arbitrary quotient map from Lemma \ref{Huber's lemma} allows us to vary the image of each $X_{i, j}$ in an open range. We can thus WLOG assume that $\varphi_{i}(X_{i, j})\in B(\frac{f_{1},\dots, f_{n}}{f_{i}})$, because the latter has dense image in $B\left\langle\frac{f_{1},\dots, f_{n}}{f_{i}}\right\rangle$. At the same time, we know that $\varphi_{i}(X_{i, j})\in B_{i}^{+}$. It is a general fact about completions that $B(\frac{f_{1},\dots, f_{n}}{f_{i}})\cap B_{i}^{+} = B(\frac{f_{1},\dots, f_{n}}{f_{i}})^{+}$, and we know that the latter is by definition $B(\frac{f_{1},\dots, f_{n}}{f_{i}})^{+} = (B^{+}[\frac{f_{1}}{f_{i}},\dots \frac{f_{n}}{f_{i}}])^{int}$. Recall that the underlying ring of $B(\frac{f_{1},\dots, f_{n}}{f_{i}})$ is $B[\frac{1}{f_{i}}]$. Since $B^{+}\subset B$ is integrally closed and $f_{i}\in B^{+}$, we have that the localization $B^{+}[\frac{1}{f_{i}}]$ is integrally closed in $ B[\frac{1}{f_{i}}]$, because taking integral closure commutes with localization. Hence, we not only have the obvious inclusion $B^{+}[\frac{f_{1}}{f_{i}},\dots \frac{f_{n}}{f_{i}}]\subset B^{+}[\frac{1}{f_{i}}]$ but also $(B^{+}[\frac{f_{1}}{f_{i}},\dots \frac{f_{n}}{f_{i}}])^{int}\subset B^{+}[\frac{1}{f_{i}}]$. It means that $\varphi_{i}(X_{i, j})\in B^{+}[\frac{1}{f_{i}}]$ and we fix $v_{i,j}\in B^{+}$ for which holds \[ \varphi_{i}(X_{i, j}) = \frac{v_{i, j}}{f_{i}^{l}} \]
	(we choose $l$ large enough so that it becomes independent of $i, j$).
	
	On the other hand, $\varphi_{i}(X_{i, j})\in (B^{+}[\frac{f_{1}}{f_{i}},\dots \frac{f_{n}}{f_{i}}])^{int}$ implies that $\varphi_{i}(X_{i, j})$ is a root of a monic polynomial \[ \varphi_{i}(X_{i, j})^{r}+a_{1}\varphi_{i}(X_{i, j})^{r-1}+\dots+a_{r-1}\varphi_{i}(X_{i, j}) + a_{r} = 0,\]
	where $a_{1},\dots, a_{r} \in B^{+}[\frac{f_{1}}{f_{i}},\dots \frac{f_{n}}{f_{i}}]$. For every $k = 1,\dots, r$ we write 
	\[ a_{k} = p_{\gamma}(\frac{f_{1}}{f_{i}},\dots, \frac{f_{n}}{f_{i}}), \]
	where $p_{\gamma} \in B^{+}[\frac{f_{1}}{f_{i}},\dots \frac{f_{n}}{f_{i}}]$ is a polynomial in $n$ variables with coefficients in $B^{+}$. We denote its coefficients by $p_{\gamma, 1},\dots, p_{\gamma, s}$  (we choose $s$ large enough so that it becomes independent of $\gamma$), so 
	\[ p_{\gamma, u}\in B^{+}\, \text{for}\, u = 1,\dots, s. \]
	
	Now, for $(i, j)\in \left\lbrace 1, \dots, n\right\rbrace\times\left\lbrace 1,\dots, m \right\rbrace  $, $d\in \left\lbrace 1,\dots, n \right\rbrace $, $e\in \left\lbrace 1,\dots, n \right\rbrace $, $\gamma = (i, j, k) \in \left\lbrace 1,\dots, n \right\rbrace\times\left\lbrace 1,\dots, m \right\rbrace\times\left\lbrace 1,\dots, r \right\rbrace$, and $u\in \left\lbrace 1,\dots, s \right\rbrace$, consider the map of Huber pairs over $\pair{A}$:
	\[  \psi\colon\pair{A\left\langle V_{i,k}, P_{\gamma, u}, F_{d}, G_{e}\right\rangle} \to \pair{B}\]
	given by $\psi(V_{i,k}) = v_{i, k}$, $\psi(P_{\gamma, u}) = p_{\gamma, u}$, $\psi(F_{d}) = f_{d}$, and $\psi(G_{e}) = g_{e}$. The map $\psi$ factors through the map $\sigma\colon \pair{D}\to \pair{B}$, where $D:=A\left\langle V_{i,k}, P_{\gamma, u}, F_{d}, G_{e}\right\rangle/\ker\psi$ is the quotient Huber ring.
	
	Denote by $\overline{v_{i, k}}$, $\overline{p_{\gamma, u}}$, $\overline{f_{d}}$, $\overline{g_{e}}$ the images of $V_{i,k}, P_{\gamma, u}, F_{d}, G_{e}$ in $D$. Since $ f_{1}g_{1}+\dots+f_{n}g_{n} - \varphi(\varpi)^{\alpha} = 0$, we know that $F_{1}G_{1}+\dots+F_{n}G_{n} - \varphi(\varpi)^{\alpha}\in\ker\psi$, and hence $\overline{f_{1}}\overline{g_{1}}+\dots+\overline{f_{n}}\overline{g_{n}} = \overline{\varpi^{\alpha}} $ in $D$, where $\overline{\varpi^{\alpha}}$ is the image of ${\varpi^{\alpha}}$ in $D$, which is a unit. Therefore, $\overline{f_{1}},\dots,\overline{f_{n}}$ generate the unit ideal in $D$. Set $D_{i}:= D\left\langle \frac{\overline{f_{1}},\dots,\overline{f_{n}}}{\overline{f_{i}}}\right\rangle$. The preimage of $\spa{D_{i}}$ is $\spa{B_{i}}$. We will show that for evey $i = 1, \dots, n$ the induced map of Huber pairs \[ \sigma_{i}\colon \pair{D_{i}}\to \pair{B_{i}} \] is a quotient map.
	
	We fix $i\in\left\lbrace 1,\dots, n \right\rbrace $. Let $E$ be the quotient Huber ring $D_{i}/\ker\sigma_{i}$ and let $\lambda\colon \pair{D_{i}}\to\pair{E}$ be the canonical quotient map of Huber pairs. We next show that for every $j\in\left\lbrace 1,\dots, m \right\rbrace $ holds \[ \lambda( \frac{\overline{v_{i,j}}}{\overline{f_{i}}^{l}}) \in E^{+}.\]
	Set $\overline{d}:= \frac{\overline{v_{i,j}}}{\overline{f_{i}}^{l}}\in D_{i}$ for brevity. For every $k\in\left\lbrace 1,\dots, r \right\rbrace $ we have defined the polynomial $p_{\gamma}$ (with $\gamma = (i,j,k)$) in $n$ variables with coefficients in $B^{+}$. Let $\overline{p_{\gamma}}$ be the polynomial obtained from $p_{\gamma}$ by substituting the coefficients $p_{\gamma, 1},\dots,p_{\gamma, s}$ with $\overline{p_{\gamma, 1}},\dots, \overline{p_{\gamma, s}}$ and the variables $\frac{f_{1}}{f_{i}},\dots, \frac{f_{n}}{f_{i}}$ with $\frac{\overline{f_{1}}}{\overline{f_{i}}},\dots, \frac{\overline{f_{n}}}{\overline{f_{i}}}$. We thus obtain an element \[ \overline{a_{k}} = \overline{p_{\gamma}}(\frac{\overline{f_{1}}}{\overline{f_{i}}},\dots, \frac{\overline{f_{n}}}{\overline{f_{i}}}) \in D_{i}.\]
	
	By construction, $\overline{d}^{r}+\overline{a_{1}}\overline{d}^{r-1}+\dots+\overline{a_{r}}$ lies in the kernel of $\sigma_{i}$, i.e.,
	\[ \lambda(\overline{d})^{r}+\lambda(\overline{a_{1}})\lambda(\overline{d})^{r-1}+\dots+\lambda(\overline{a_{r}}) = 0.\]
	From $\frac{\overline{f_{1}}}{\overline{f_{i}}},\dots, \frac{\overline{f_{n}}}{\overline{f_{i}}}\in D_{i}^{+}$ and $\overline{p_{\gamma, u}}\in D_{i}^{+}$ we get that $\overline{a_{k}} \in D_{i}^{+}$. Since $E^{+}$ is integrally closed, we achieve $\lambda(\overline{d})\in E^{+}$, as we wished.
	
	We can thus define the following map of Huber pairs \[ \mu\colon\pair{A\left\langle X_{i,1},\dots, X_{i, m}\right\rangle} \to \pair{E}\]
	given by $\mu(X_{i, j}) = \lambda(\frac{\overline{v_{i,j}}}{\overline{f_{i}}^{l}})$ for $j = 1, \dots, m$. The map of Huber pairs \[ \sigma_{i}\colon \pair{D_{i}}\to \pair{B_{i}} \] factors through the map of Huber pairs $\tau\colon \pair{E}\to \pair{B_{i}}$. We consider the following diagram:
	\[ 
	\begin{tikzcd}
		\pair{D_{i}}\arrow[r, "\sigma_{i}"]\arrow[d, "\lambda"] & \pair{B_{i}}\\
		\pair{E}\arrow[ur, "\tau"]& \pair{A\left\langle X_{i,1},\dots, X_{i, m}\right\rangle}\arrow[l, "\mu"]\arrow[u, "\varphi_{i}"]
	\end{tikzcd}
	 \]
	By construction, the diagram is commutative. By assumption, $\varphi_{i}$ is a quotient map of Huber pairs. It follows that $\tau$ is a quotient map of Huber pairs. Consequently, $\sigma_{i}$ is a quotient map of Huber pairs.
	
	In the context of the proof strategy outlined in the beginning of this section, we let $M$ to be the total number of variables $V_{i,k}, P_{\gamma, u}, F_{d}, G_{e}$. We let $f\colon\spa{B}\to\spa{\Tate{A}{M}}$ be the morphism corresponding to the map of Huber pairs $ \psi\colon\pair{A\left\langle V_{i,k}, P_{\gamma, u}, F_{d}, G_{e}\right\rangle} \to \pair{B}$. We set $D:=A\left\langle V_{i,k}, P_{\gamma, u}, F_{d}, G_{e}\right\rangle/\ker\psi$. We let $\sigma \colon\spa{B}\to\spa{D}$ be the morphism corresponding to the map of Huber pairs $\sigma\colon\pair{D}\to\pair{B}$. We let $q\colon \spa{D}\to \spa{\Tate{A}{M}}$ to be the closed immersion corresponding to the canonical quotient map of Huber pairs \[ \pair{\Tate{A}{M}}\to\pair{D}. \] The proof of Lemma \ref{main lemma} is finished.
	
	\end{proof}
	
	\subsection{Global complete intersection}\label{subsection global complete intersection}
	
	In this subsection we show that we can refine our choice of the closed immersion from Proposition \ref{main proposition} such that the ideal $\mathcal{I}$ is generated by exactly $n$ elements, where $n$ is the number of variables of $\spa{\Tate{A}{n}}$, i.e, we prove Proposition \ref{upgraded main proposition}. In fact, its proof is analogous to the proof of Proposition \ref{proposition 1.7.1} (see \cite[Proposition 1.7.1]{Hu96}), which in turn is analogous to the proof of Proposition \ref{proposition 0} (see \cite[\href{https://stacks.math.columbia.edu/tag/00U9}{Tag 00U9}]{Stacks}). However, in our case we face a technical difficulty concerning the availability of a theory of ``quasicoherent" sheaves on adic spaces. To deal with this technical difficulty, in what follows, we will use the theory of pseudocoherent sheaves on adic spaces, as developed in \cite{KL16}. In the next subsubsection, we collect the statements from this theory that we will use, for reference. We collect some important statements from \cite{FS}, too.
		
	\subsubsection{Pseudocoherent sheaves}
		
	\begin{lemma}[{\cite[Lemma IV.4.13]{FS}}]\label{regular sequence}
		Let $X = \Spa{\Tate{R}{n}}$ where $R$ is a sous-perfectoid Tate ring, let $Y = \Spa{S}$ where $S$ is a sous-perfectoid Tate ring, and let $f\colon Y\to X$ be a smooth map. Then $X_{1},\dots, X_{n}$ define a regular sequence on $S$ and $(X_{1},\dots, X_{n})S\subset S$ is a closed ideal.
	\end{lemma}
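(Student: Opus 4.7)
The plan is to verify the claim locally on $Y$ via the structure theorem for smooth maps, and then descend to the global statement using Kedlaya--Liu's pseudocoherent sheaf theory from \cite{KL16} -- which is precisely the tool the paper announces for this kind of local-to-global difficulty.

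By Definition \ref{smooth map} (iii), cover $Y = \spa{S}$ by affinoid opens $V_{j} = \spa{S_{j}}$, each admitting an \'etale map $V_{j}\to \mathbb{B}^{d_{j}}_{X} = \spa{\Tate{R}{n+d_{j}}}$ over $X$, with additional variables $T_{1},\ldots, T_{d_{j}}$. On the relative ball itself the claim is transparent: evaluation at $X_{1}=\cdots=X_{n}=0$ gives a continuous surjection $\Tate{R}{n+d_{j}}\twoheadrightarrow \Tate{R}{d_{j}}$, whose kernel is closed and coincides with the ideal $(X_{1},\ldots, X_{n})\Tate{R}{n+d_{j}}$, and the fact that the quotient is itself a polydisk ring shows inductively that each $X_{i}$ is a nonzerodivisor on the corresponding stage, so $X_{1},\ldots, X_{n}$ form a regular sequence. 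I then transfer both properties along the \'etale map $A_{j}:=\Tate{R}{n+d_{j}}\to S_{j}$, using that \'etale maps in the sous-perfectoid setting are flat: the two building blocks allowed by Definition \ref{smooth map} (ii), namely finite \'etale maps (finite projective modules) and open immersions (rationally locally given by flat rational localizations), each preserve exactness of the Koszul complex, so $X_{1},\ldots, X_{n}$ remain a regular sequence on $S_{j}$. For closedness of $(X_{1},\ldots, X_{n})S_{j}$, I would identify this ideal with the kernel of $S_{j}\to S_{j}\widehat{\otimes}_{A_{j}}\Tate{R}{d_{j}}$; by flatness the target is $S_{j}/(X_{1},\ldots, X_{n})S_{j}$ with its natural quotient topology, sous-perfectoid (hence Hausdorff and complete), so the kernel of the continuous map is both closed and equal to $(X_{1},\ldots, X_{n})S_{j}$.

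To globalize, the Koszul complex $K^{\bullet}(X_{1},\ldots, X_{n}; \mathcal{O}_{Y})$ and the ideal sheaf $\mathcal{I}\subset \mathcal{O}_{Y}$ generated by $X_{1},\ldots, X_{n}$ fit into the framework of pseudocoherent sheaves on $Y$ in the sense of \cite{KL16}; by the descent results of \emph{loc.\ cit.}, local exactness on the cover $\{V_{j}\}$ promotes to global exactness of the Koszul complex on $S$, and the local identifications $\mathcal{I}(V_{j}) = (X_{1},\ldots, X_{n})S_{j}$ assemble to a global identification of $(X_{1},\ldots, X_{n})S$ with the closed subgroup $\mathcal{I}(Y)\subset S$. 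The main obstacle I anticipate is the \emph{local} closedness step: without noetherian hypotheses the usual Artin--Rees arguments are unavailable, so one must delicately combine the sous-perfectoid structure (which supplies Hausdorff quotients via splittings to perfectoid covers) with the flatness of \'etale maps in order to simultaneously control the topology and the algebraic structure of the extended ideal. The descent step, by contrast, should be comparatively formal once the local data is known to live in the pseudocoherent world.
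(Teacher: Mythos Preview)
The paper does not prove this lemma; it is quoted verbatim from \cite[Lemma IV.4.13]{FS} in the list of inputs at the start of subsubsection 3.2.1 and used as a black box in subsubsection 3.2.2. There is therefore no proof in the paper to compare your attempt against.

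That said, your strategy is essentially the one in \cite{FS}: check the Koszul complex locally via the \'etale-over-a-ball description, then globalize using the pseudocoherent machinery of \cite{KL16}. One point deserves more care. You write that \'etale maps are ``flat'' and that open immersions are ``rationally locally given by flat rational localizations''. In the non-noetherian sous-perfectoid world, rational localizations are \emph{not} flat in general; what Kedlaya--Liu supply (Theorem \ref{pseudoflatness of rational localizations} in the paper's numbering) is exactness of base change on $\mathrm{PCoh}$. This suffices for your purpose, because each successive quotient $A_{j}/(X_{1},\ldots,X_{i})$ admits a finite free resolution (the truncated Koszul complex) and hence lies in $\mathrm{PCoh}_{A_{j}}$, but you should invoke pseudoflatness rather than flatness. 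Likewise, in the globalization step, the reason the global quotient $S/(X_{1},\ldots,X_{n})S$ is complete for its natural topology---and hence the ideal is closed---is exactly that it lies in $\mathrm{PCoh}_{S}$ and Theorem \ref{equivalence of categories pseudocoherent} applies; this deserves to be said explicitly rather than folded into the phrase ``fit into the framework of pseudocoherent sheaves''.
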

	\begin{proposition}[{\cite[Proposition IV.4.15]{FS}}]\label{"conormal" sequence}
		Let $f_{i}\colon Y_{i}\to X$, $i = 1,2$, be smooth maps of sous-perfectoid adic spaces, and let $g\colon Y_{1}\to Y_{2}$ be a map over $X$.
		\begin{itemize}
			\item[i)] If $g$ is smooth, then the sequence \[ 0\to g^{*}\Omega^{1}_{Y_{2}/X}\to\Omega^{1}_{Y_{1}/X}\to\Omega^{1}_{Y_{1}/Y_{2}}\to 0 \] is exact.
			\item[ii)] Conversely, if $ g^{*}\Omega^{1}_{Y_{2}/X}\to\Omega^{1}_{Y_{1}/X}$ is a locally split injection, then $g$ is smooth.
			\item[\ph] In particular, if $g^{*}\Omega^{1}_{Y_{2}/X}\to\Omega^{1}_{Y_{1}/X}$ is an isomorphism, then $g$ is \'etale.
		\end{itemize}
	\end{proposition}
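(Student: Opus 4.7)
The plan is to prove the proposition in three stages: first the general right-exactness of the conormal sequence independent of any smoothness hypothesis, then the injectivity of the leftmost arrow under smoothness of $g$, and finally part (ii) by a converse analysis of the same diagonal computation. The surjection $\Omega^1_{Y_1/X} \twoheadrightarrow \Omega^1_{Y_1/Y_2}$ with kernel the image of $g^*\Omega^1_{Y_2/X}$ is formal from Definition \ref{definition sheaf of differentials}: unwinding the diagonal, the closed immersion $Y_1 \times_{Y_2} Y_1 \hookrightarrow Y_1 \times_X Y_1$ produces a surjection of ideal sheaves whose induced map on $\mathcal{I}/\mathcal{I}^2$ parallels the classical scheme-theoretic derivation verbatim.

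The real content of (i) is therefore the injectivity of $g^*\Omega^1_{Y_2/X} \to \Omega^1_{Y_1/X}$ when $g$ is smooth. My strategy is to argue locally on $Y_1$ and decompose $g$ into two model cases using Definition \ref{smooth map}(iii): on a small open $V$ where $g|_V$ factors as an \'etale $h\colon V \to \mathbb{B}^d_{Y_2}$ followed by the projection $\pi\colon \mathbb{B}^d_{Y_2} \to Y_2$, the transitivity step above reduces the question to establishing the full short-exact sequence separately for $\pi$ and for $h$. For the projection $\pi$, the key input is Lemma \ref{regular sequence}: applied to the diagonal $\mathbb{B}^d_{Y_2} \hookrightarrow \mathbb{B}^d_{Y_2} \times_{Y_2} \mathbb{B}^d_{Y_2}$, it guarantees that the ideal is closed and cut out by the regular sequence $X_i - X_i'$, so $\Omega^1_{\mathbb{B}^d_{Y_2}/Y_2}$ is honestly free of rank $d$ on $dX_1, \ldots, dX_d$. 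A parallel computation over $X$, using that $\mathbb{B}^d_{Y_2}$ is the base change of $\mathbb{B}^d_X$ along $Y_2 \to X$, identifies $\Omega^1_{\mathbb{B}^d_{Y_2}/X}$ with $\pi^*\Omega^1_{Y_2/X} \oplus \mathcal{O}^d$, from which injectivity of $\pi^*\Omega^1_{Y_2/X} \hookrightarrow \Omega^1_{\mathbb{B}^d_{Y_2}/X}$ and exactness of the whole sequence are immediate. For \'etale $h$, Definition \ref{smooth map}(ii) reduces further to open immersions (trivial) and finite \'etale maps. In the latter case the diagonal is a clopen immersion inside the fiber product, so $\mathcal{I}_h/\mathcal{I}_h^2 = 0$ and $\Omega^1_h = 0$; the right-exact sequence for $V \to \mathbb{B}^d_{Y_2} \to Y_2$ then collapses to an isomorphism $h^*\Omega^1_{\mathbb{B}^d_{Y_2}/Y_2} \cong \Omega^1_{V/Y_2}$, transferring injectivity from the projection case.

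For (ii), I would run the preceding analysis in reverse, locally on $Y_1$. Using smoothness of $f_1$ and $f_2$, pick local \'etale charts $Y_i \to \mathbb{B}^{d_i}_X$ together with the associated free bases for $\Omega^1_{Y_i/X}$. The splitting of the hypothesized injection selects $d_2$ of the $d_1$ coordinates on $Y_1$ spanning the image of $g^*\Omega^1_{Y_2/X}$; the remaining $d_1 - d_2$ coordinates produce a map $V \to \mathbb{B}^{d_1-d_2}_{Y_2}$ whose relative differential is, by part (i) applied to the resulting transitivity triangle, an isomorphism after pullback. A direct Jacobian verification in these explicit coordinates (carried out purely within the chart, without recourse to the main proposition of this thesis) then promotes this map to an \'etale one, exhibiting $g|_V$ as smooth. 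The \'etale corollary follows at once: an isomorphism $g^*\Omega^1_{Y_2/X} \cong \Omega^1_{Y_1/X}$ forces $\Omega^1_{Y_1/Y_2} = 0$ via the now-established exact sequence, so $d_1 - d_2 = 0$ and the chart witnesses $g$ as \'etale.

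The main obstacle, as I see it, is the projection case of part (i): passing rigorously from regularity of the sequence $X_i - X_i'$ (supplied by Lemma \ref{regular sequence}) to honest freeness of the $\mathcal{O}$-module $\mathcal{I}/\mathcal{I}^2$, and then to the Künneth-type decomposition $\Omega^1_{\mathbb{B}^d_{Y_2}/X} \cong \pi^*\Omega^1_{Y_2/X} \oplus \mathcal{O}^d$, in the sous-perfectoid setting where Koszul-complex arguments are not as freely available as in the noetherian world. Everything else is formal manipulation of the transitivity triangle and the diagonal definition.
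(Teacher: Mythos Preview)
The paper does not give its own proof of this proposition: it is listed in \S\ref{subsection global complete intersection} (the subsubsection on pseudocoherent sheaves) purely as a reference result quoted from \cite[Proposition IV.4.15]{FS}, alongside Lemma \ref{regular sequence} and Proposition \ref{closed immersion behaves well}, with no argument supplied. There is therefore nothing in the paper to compare your proposal against.

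That said, your sketch is broadly the right shape and tracks the argument one finds in \cite{FS} itself: reduce to the two model cases (projection from a ball, and \'etale), handle the projection via the regular-sequence lemma, and for part (ii) use local charts to produce a candidate factorization through $\mathbb{B}^{d_1-d_2}_{Y_2}$. One point to be careful about is your opening claim that right-exactness is ``formal from Definition \ref{definition sheaf of differentials}'': in the sous-perfectoid setting the ideal sheaves are defined via diagonals of smooth maps, and the surjection of ideal sheaves you invoke, together with the identification of its kernel, already requires knowing that these ideals are pseudocoherent and behave well under pullback --- content supplied in \cite{FS} by the analogue of Proposition \ref{closed immersion behaves well} rather than by a purely diagrammatic argument. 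Your identified obstacle (passing from regularity of $X_i - X_i'$ to a K\"unneth-type splitting of $\Omega^1_{\mathbb{B}^d_{Y_2}/X}$) is indeed where the work lies, and is handled in \cite{FS} by a direct computation with the diagonal ideal rather than by abstract Koszul machinery.
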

	\begin{proposition}[{\cite[Proposition IV.4.19]{FS}}]\label{closed immersion behaves well}
		Let $f\colon Y\to X$, $f\colon Y^{\prime}\to X$ be smooth maps of sous-perfectoid adic spaces, and let $g\colon Y\to Y^{\prime}$ be a map over $X$. The following are equivalent.
		\begin{itemize}
			\item[i)] There is a cover of $Y^{\prime}$ by open affinoid $V^{\prime} = \Spa{S^{\prime}}$ such that $V = Y\times_{Y^{\prime}}V^{\prime} = \Spa{S}$ is affinoid and $\pair{S^{\prime}}\to \pair{S}$ is a quotient map of Huber pairs.
			\item[i))] For any open affinoid $V^{\prime} = \Spa{S^{\prime}}\subset Y^{\prime}$, the preimage $V = Y\times_{Y^{\prime}}V^{\prime} = \Spa{S}$ is affinoid and $\pair{S^{\prime}}\to \pair{S}$ is a quotient map of Huber pairs.
		\end{itemize}
		Moreover, in this case the ideal sheaf $\mathcal{I_{Y\subset Y^{\prime}}}\subset \mathcal{O}_{Y^{\prime}}$ is pseudocoherent, and locally generated by sections $f_{1},\dots, f_{r}\in\mathcal{O}_{Y^{\prime}}$ such that $df_{1},\dots, df_{r}\in\Omega^{1}_{Y^{\prime}/X}$ can locally be extended to a basis.
	\end{proposition}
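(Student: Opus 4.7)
The direction (i)) $\Rightarrow$ (i) is trivial, so I focus on the reverse implication together with the ``moreover'' clause. The overall strategy is to fix an arbitrary open affinoid $V' = \Spa{S'}\subset Y'$, reduce the problem to a gluing question by rational refinement, and then glue via the pseudocoherent sheaf theory of \cite{KL16}. As a first step, I would refine the cover from (i) to cover $V'$ by opens $W'_\alpha\subset V'$ which are simultaneously rational localizations of $V'$ and of some member of the cover from (i). By Proposition \ref{open surjectivity is preserved by rational localizations}, the preimage $W_\alpha := g^{-1}(W'_\alpha) = \Spa{T_\alpha}$ is then affinoid and the induced map $\pair{T'_\alpha}\to\pair{T_\alpha}$ is a quotient of Huber pairs. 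This reduces everything to a local-to-global question on $V'$ for the closed ideals $I_\alpha := \ker(T'_\alpha\to T_\alpha)$.

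Next I would control each $I_\alpha$ locally using smoothness of both $Y'$ and $Y$ over $X$. Shrinking $W'_\alpha$ further, Definition \ref{smooth map} provides étale charts $W'_\alpha\to \mathbb{B}^d_X$ and $W_\alpha\to \mathbb{B}^{d'}_X$, so the sheaves $\Omega^1_{W'_\alpha/X}$ and $\Omega^1_{W_\alpha/X}$ are locally free of ranks $d$ and $d'$. Using Lemma \ref{regular sequence} and an argument in the spirit of the proof of Proposition \ref{main proposition} (adapted to the setting where both source and target are smooth), I would produce generators $f_1,\dots, f_r\in I_\alpha$ whose differentials $df_1,\dots, df_r\in \Omega^1_{W'_\alpha/X}$ extend to a basis. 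This simultaneously yields the ``moreover'' clause and shows that $I_\alpha$ is finitely generated by a regular sequence. Combined with flatness of rational localization in the sous-perfectoid setting and the regular-sequence structure, this description ensures that formation of $I_\alpha$ commutes with further rational refinement, which is the compatibility needed for gluing.

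Finally, with the local ideals under control, I would invoke Kedlaya--Liu's pseudocoherent gluing from \cite{KL16} to descend the $\widetilde{I_\alpha}$ to a pseudocoherent ideal sheaf $\mathcal{I}\subset \mathcal{O}_{V'}$. The quotient $\mathcal{O}_{V'}/\mathcal{I}$ is itself pseudocoherent, hence represented by a Huber pair $\pair{S}$, and $\Spa{S}\hookrightarrow V'$ is a global closed immersion whose restriction to each $W'_\alpha$ is $W_\alpha\hookrightarrow W'_\alpha$. By uniqueness of the gluing, it must coincide with $V\hookrightarrow V'$, so $V$ is affinoid and $\pair{S'}\to\pair{S}$ is a quotient, which proves (i)); the pseudocoherence and local description of generators for $\mathcal{I}_{Y\subset Y'}$ then follow directly from the construction. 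I expect the main obstacle to be the differential-theoretic step in the second paragraph: compatibly matching étale charts on $W_\alpha$ and $W'_\alpha$ so as to produce generators whose differentials extend to a basis, \emph{without} assuming $g$ itself is smooth. Once that local picture is secured, the gluing is a standard application of the Kedlaya--Liu framework.
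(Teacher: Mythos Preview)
The paper does not actually prove this proposition. It is stated in subsubsection 3.2.1 as a citation from \cite[Proposition IV.4.19]{FS}, alongside several other results collected ``for reference,'' and no proof is given. Earlier in section \ref{subsection global closed immersion} the author writes that ``it is in our plans to give an exposition of the proof of \cite[Proposition IV.4.19]{FS},'' but no such exposition appears in the text. There is therefore no paper-proof to compare your proposal against.

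That said, your outline matches the strategy the paper itself signals. Remark \ref{local implies global remark} identifies the two key ingredients as ``the special setting of smooth maps and the gluing result of pseudocoherent sheaves by Kedlaya-Liu from \cite{KL16},'' and your three steps---rational refinement via Proposition \ref{open surjectivity is preserved by rational localizations}, local control of the ideal through the regular-sequence structure coming from smoothness (Lemma \ref{regular sequence}), and descent via Theorem \ref{equivalence of categories pseudocoherent}---are exactly the tools the paper assembles and uses elsewhere. The step you flag as the main obstacle, namely producing local generators $f_1,\dots,f_r$ whose differentials extend to a basis of $\Omega^1_{Y'/X}$ without assuming $g$ smooth, is indeed where the substance lies; the paper treats this as a black box imported from \cite{FS}. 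One small point: your sentence ``the quotient $\mathcal{O}_{V'}/\mathcal{I}$ is itself pseudocoherent, hence represented by a Huber pair'' elides a step---what you really need is that $I=\mathcal{I}(V')$ is a closed ideal of $S'$ (which follows from pseudocoherence in $\mathrm{PCoh}_{S'}$), so that $S'/I$ is a Huber ring and the quotient pair makes sense.
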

	
	Let $A$ be a ring. We denote the category of $A$-modules by $ \mathrm{Mod}_{A}$. We denote the category of finite projective $A$-modules by $\mathrm{FPMod}_{A}$.
	\begin{definition}
		An $A$-module $M$ is called \textit{pseudocoherent} if it admits a resolution (possibly infinite) by finite projective $A$-modules \[ \to P_{1}\to P_{0} \to A\to 0. \] 
	\end{definition}
	
	\begin{theorem}[{\cite[Theorem 1.4.2]{K17}}]\label{equivalence of categories finite projective}
		Let $\pair{A}$ be a Huber pair and let $X = \Spa{A}$. If $A$ is sheafy, the functors $\mathrm{FPMod_{A}}\to\mathrm{Vec_{X}}$ given by $M\mapsto \widetilde{M}$ is an exact equivalence of categories, with quasi-inverse taking $\mathcal{F}$ to $\mathcal{F}(X)$. 
	\end{theorem}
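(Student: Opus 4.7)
The plan is to establish the equivalence by the standard sheaf-theoretic route: first show that tildification is well-defined and exact, then identify $M$ with $\widetilde{M}(X)$ using sheafiness of $A$, and finally glue a given vector bundle from a trivializing rational cover to produce a finite projective module.

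First I would check that $M\mapsto\widetilde{M}$ lands in $\mathrm{Vec}_X$. When $M = A^n$ is free, $\widetilde{M} = \mathcal{O}_X^n$ is a trivial vector bundle. Any finite projective $A$-module $M$ is a direct summand of some $A^n$ cut out by an idempotent $e\in\mathrm{End}_A(A^n)$; the tildification of this splitting expresses $\widetilde{M}$ as a direct summand of $\mathcal{O}_X^n$, hence as a locally free $\mathcal{O}_X$-module of finite rank. Exactness of tildification follows immediately, since short exact sequences of finite projective modules split and tildification commutes with direct sums.

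Next I would compute global sections. Sheafiness of $A$ gives $\mathcal{O}_X(X) = A$, hence $\widetilde{A^n}(X) = A^n$. For general finite projective $M$, writing $M$ as the image of an idempotent acting on $A^n$, the global sections functor commutes with direct summands and yields a natural isomorphism $M\cong\widetilde{M}(X)$. This is precisely the unit of the candidate adjunction, and its invertibility already gives full faithfulness of tildification.

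Essential surjectivity is the crux. Given $\mathcal{F}\in\mathrm{Vec}_X$, I choose a finite rational covering $X = \bigcup_{i=1}^{r} U_i$ with $U_i = \Spa{A_i}$ on which $\mathcal{F}|_{U_i}\cong\mathcal{O}_{U_i}^{n}$. Setting $M := \mathcal{F}(X)$, the sheaf axiom presents $M$ as the equalizer of $\prod_i \mathcal{F}(U_i) \rightrightarrows \prod_{i,j}\mathcal{F}(U_i\cap U_j)$, which is a Tate-style descent datum built from finite projective $A_i$-modules along rational localizations. Combining Tate acyclicity for vector bundles in the sheafy setting with descent of finite projective modules along rational coverings, $M$ becomes finite projective over $A$ and the base-change maps $M\otimes_A A_i \to \mathcal{F}(U_i)$ are isomorphisms. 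The natural comparison $\widetilde{M}\to\mathcal{F}$ is then a map of vector bundles which is an isomorphism on each $U_i$, and hence an isomorphism of sheaves. The main obstacle is exactly this descent step: without noetherian hypotheses on $A$, it is not formal that the equalizer above is a finite projective $A$-module. The correct framework, as developed in \cite{KL16}, is to embed finite projective modules into the category of pseudocoherent sheaves (for which descent along rational coverings is established) and then to detect finite projectivity of $M$ from its local freeness on the trivializing cover.
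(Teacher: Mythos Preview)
The paper does not prove this theorem at all: it is stated with attribution to \cite[Theorem 1.4.2]{K17} and used as a black box in the subsequent arguments of subsection~3.2.2. There is therefore no in-paper proof to compare your proposal against.

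For what it is worth, your outline is a faithful sketch of how the result is actually established in the source (essentially \cite{KL15}, summarized in \cite{K17}): tildification lands in $\mathrm{Vec}_X$ and is exact for the formal direct-summand reasons you give, full faithfulness reduces to sheafiness of $A$, and essential surjectivity is the gluing statement for finite projective modules along a rational covering. One minor correction of perspective: you appeal to the pseudocoherent machinery of \cite{KL16} to justify the descent step, but this is logically and historically inverted---the vector bundle case is proved first (Tate acyclicity plus an explicit approximation argument showing $\mathcal{F}(X)$ is finite projective), and the pseudocoherent theory of \cite{KL16} is then built on top of it. If you wanted a self-contained argument, the route in \cite{KL15} is to use Tate acyclicity to obtain $H^1(X,\mathcal{F})=0$, deduce that $M=\mathcal{F}(X)$ is finitely generated with $M\otimes_A A_i\to\mathcal{F}(U_i)$ surjective, and then run a Nakayama-type approximation to produce the projectivity and the local isomorphisms.
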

	
	\begin{definition}
		Let $A$ be a topological ring. Define the cateogry $\mathrm{PCoh}_{A}$ to be the full subcategory of $\mathrm{Mod}_{A}$ spanned by pseudocoherent $A$-modules that are complete for their natural topology.
	\end{definition}
	
	 It is true that any arrow in $\mathrm{PCoh}_{A}$ is automatically continuous.
	
	\begin{theorem}[{\cite[Theorem 1.4.13]{K17}}]\label{pseudoflatness of rational localizations}
		Let $\pair{A}$ be a Huber pair and let $\pair{A}\to\pair{B}$ be a rational localization. If $A$ is sheafy, base extension from $A$ to $B$ defines an exact functor $\mathrm{PCoh}_{A}\to \mathrm{PCoh}_{B}$.
	\end{theorem}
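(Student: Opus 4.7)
The plan is to factor the rational localization into two simpler maps and handle each separately. By Proposition \ref{rational localization}, the map $\pair{A}\to\pair{B}$ corresponding to data $(T=\{t_{1},\dots,t_{n}\},s)$ factors as
\[ A\hookrightarrow\Tate{A}{n}\twoheadrightarrow\Tate{A}{n}/\overline{(t_{1}-sX_{1},\dots,t_{n}-sX_{n})}\cong B. \]
It therefore suffices to verify that each of these two arrows induces an exact base-change functor preserving the subcategory of pseudocoherent complete modules, and to check that composition of two such functors is again of the same form.

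For the inclusion $A\hookrightarrow\Tate{A}{n}$, I would argue that $\Tate{A}{n}$ is a topologically free $A$-module on the monomials $X^{\alpha}$, so that for any finite-projective resolution $P_{\bullet}\to M$ of a pseudocoherent complete $A$-module $M$, the completed tensor product $P_{\bullet}\widehat{\otimes}_{A}\Tate{A}{n}$ is again a finite-projective resolution. This simultaneously witnesses exactness of the base change and pseudocoherence of the output; completeness is automatic since finite-projective modules over a Huber ring carry their canonical natural topology. For the surjection $\Tate{A}{n}\twoheadrightarrow B$, the crucial input is an analogue of Lemma \ref{regular sequence}: the elements $t_{1}-sX_{1},\dots,t_{n}-sX_{n}$ form a regular sequence in $\Tate{A}{n}$ and the ideal they generate is closed. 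Granted this, the Koszul complex on $t_{1}-sX_{1},\dots,t_{n}-sX_{n}$ is a finite-free resolution of $B$ over $\Tate{A}{n}$, and exactness of the base change $-\widehat{\otimes}_{\Tate{A}{n}}B$ on $\mathrm{PCoh}_{\Tate{A}{n}}$ reduces to the vanishing of higher $\mathrm{Tor}$ against this Koszul resolution, which is classical; pseudocoherence of $M\widehat{\otimes}_{\Tate{A}{n}}B$ follows by splicing a finite-projective resolution of $M$ with the finite-free Koszul resolution of $B$.

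The main obstacle I anticipate lies in the topological subtleties: verifying that quotienting by the \emph{closure} of the ideal $(t_{1}-sX_{1},\dots,t_{n}-sX_{n})$ does not break the regular-sequence computation, and that the completed tensor product for pseudocoherent complete modules genuinely agrees with the underived tensor product, so that exactness is preserved under completion. These are precisely the delicate points handled by the machinery of \cite{KL16}, and any self-contained argument would have to reprove enough of that machinery to justify them. Once those subtleties are settled, assembling the two factorization steps into an exact functor $\mathrm{PCoh}_{A}\to\mathrm{PCoh}_{B}$ is then formal.
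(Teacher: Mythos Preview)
The paper does not supply a proof of this theorem: it is quoted as \cite[Theorem 1.4.13]{K17} in the subsubsection ``Pseudocoherent sheaves'', which the author explicitly introduces as a collection of external statements for reference. There is therefore no argument in the paper against which to compare your proposal.

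Regarding the proposal on its own terms: the factorization idea is natural, but there is a genuine gap. Your argument never invokes the sheafiness hypothesis on $A$, yet that hypothesis is essential --- rational localizations of non-sheafy Tate rings can fail to be flat already on finitely generated modules. The place where sheafiness ought to enter is precisely the ``analogue of Lemma~\ref{regular sequence}'' you appeal to, but that lemma in the paper is stated only under sous-perfectoid and smoothness hypotheses, and the bare claim that $t_{1}-sX_{1},\dots,t_{n}-sX_{n}$ form a regular sequence on $\Tate{A}{n}$ for an arbitrary sheafy Huber ring $A$ is neither proved here nor straightforward (note that $s$ need not be a non-zero-divisor in $A$, and Proposition~\ref{rational localization} takes the quotient by the \emph{closure} of $(t_{i}-sX_{i})$, not by the ideal itself). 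You flag these as ``topological subtleties'', but the regularity of the sequence is an algebraic input that is doing all the work, and it is exactly what the sheafiness hypothesis is meant to supply by an indirect route. The proof in \cite{K17} and the underlying \cite{KL16} proceeds differently: one passes through the equivalence of Theorem~\ref{equivalence of categories pseudocoherent} between $\mathrm{PCoh}_{A}$ and pseudocoherent sheaves on $\Spa{A}$, whereupon base change along a rational localization becomes restriction of sheaves to a rational open, and exactness is deduced from Tate acyclicity of the structure sheaf --- which is precisely what sheafiness guarantees.
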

	
	\begin{theorem}[{\cite[Theorem 1.4.17]{K17}}]\label{equivalence of categories pseudocoherent}
		Let $\pair{A}$ be a Huber pair and let $X = \Spa{A}$. If $A$ is sheafy, the functors $\mathrm{PCoh_{A}}\to\mathrm{PCoh_{X}}$ given by $M\mapsto \widetilde{M}$ is an exact equivalence of categories, with quasi-inverse taking $\mathcal{F}$ to $\mathcal{F}(X)$. 
	\end{theorem}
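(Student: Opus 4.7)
The plan is to bootstrap the theorem from the finite projective case (Theorem \ref{equivalence of categories finite projective}) using that every $M \in \mathrm{PCoh}_{A}$ admits a resolution $\cdots \to P_{1}\to P_{0}\to M\to 0$ by finite projective $A$-modules. The central technical input is cohomology vanishing for the tildifications: in the finite projective case this is Tate acyclicity, and for pseudocoherent modules it will follow by combining Tate acyclicity with the exactness of rational localization on $\mathrm{PCoh}$ provided by Theorem \ref{pseudoflatness of rational localizations}.

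First I would verify that the tildification $\widetilde{M}$, defined on a rational open $U = \Spa{B}$ by $\widetilde{M}(U) := M\, \widehat{\otimes}_{A} B$, is a pseudocoherent sheaf. Exactness of rational localization on $\mathrm{PCoh}$ implies that tildifying the resolution $P_{\bullet}\to M$ yields an exact sequence of presheaves on any rational cover. A \v{C}ech spectral sequence argument then reduces both the sheaf property and the vanishing $H^{i}(X, \widetilde{M})=0$ for $i>0$ to the finite projective case, i.e.\ to Theorem \ref{equivalence of categories finite projective} together with Tate acyclicity. For full faithfulness, I would use the resolution to write $\mathrm{Hom}_{A}(M, N)$ as the kernel of $\mathrm{Hom}_{A}(P_{0}, N)\to\mathrm{Hom}_{A}(P_{1}, N)$, and identify this with $\mathrm{Hom}(\widetilde{M}, \widetilde{N})$ via the finite projective case applied to $P_{0}$ and $P_{1}$, using the cohomology vanishing to compute morphisms of sheaves in terms of global sections. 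For essential surjectivity, given a pseudocoherent sheaf $\mathcal{F}$, set $M:=\mathcal{F}(X)$; since $\mathcal{F}$ is by definition locally the tildification of a pseudocoherent module, the cohomology vanishing established above lets the local descriptions glue to present $M$ as a pseudocoherent $A$-module with $\widetilde{M}\cong \mathcal{F}$. Exactness of $M\mapsto\widetilde{M}$ then follows once more from Theorem \ref{pseudoflatness of rational localizations}, since exactness of sheaves can be tested on a rational cover.

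The main obstacle is the cohomology vanishing step for \emph{infinite} resolutions, where one must control the interaction of the completed tensor product $-\,\widehat{\otimes}_{A} B$ with the limit of partial cokernels along the resolution. This is precisely where the sheafiness hypothesis on $A$ and the pseudoflatness of rational localizations must be combined carefully: without pseudoflatness, the tildification of the resolution would cease to be exact after restriction to rational subspaces, and without Tate acyclicity at each finite stage, the \v{C}ech argument could not propagate vanishing through the colimit to the pseudocoherent object. Once this vanishing is secured, the rest of the argument is a formal reduction, entirely parallel to the classical scheme-theoretic equivalence between quasicoherent sheaves and modules.
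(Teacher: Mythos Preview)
The paper does not prove this theorem at all: it is quoted verbatim as \cite[Theorem 1.4.17]{K17} and used as a black box in the subsequent arguments. There is therefore no ``paper's own proof'' to compare against; your proposal is a self-contained sketch of a result the thesis merely imports.

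As for the sketch itself, the overall architecture is the standard one and is essentially what Kedlaya carries out: reduce to the finite projective case via a projective resolution, use pseudoflatness of rational localization (Theorem \ref{pseudoflatness of rational localizations}) to propagate exactness, and invoke Tate acyclicity at each stage. One point worth tightening: you frame the ``main obstacle'' as controlling an infinite resolution against completed tensor products, but in practice one never needs the whole infinite resolution simultaneously. The cohomology vanishing is obtained by dimension-shifting along a single short exact sequence $0\to K\to P\to M\to 0$ with $P$ finite projective and $K$ again pseudocoherent, iterating as needed; this avoids any limit-versus-completion interaction. Also, since Theorem \ref{pseudoflatness of rational localizations} already asserts that the \emph{ordinary} tensor product $M\otimes_{A}B$ lands in $\mathrm{PCoh}_{B}$ and is hence complete, the completed tensor product in your definition of $\widetilde{M}$ is superfluous.
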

	
	\subsubsection{Proof of global complete intersection}
	
	We recall the setup: 
	
	\[\begin{tikzcd}
		\ph&\spa{\Tate{A}{n}}=\mathbb{B}^{n}_{A}\arrow[d, "\pi"]\\
		\spa{B}\arrow[r, "g"]\arrow[ur, "f"]&\spa{A},
	\end{tikzcd}\]
	where $g\colon\spa{B}\to\spa{A}$ is an \'etale map of affinoid sous-perfectoid spaces, $f$ is a global closed immersion with ideal sheaf $\mathcal{I}$, and the natural map $\mathcal{I}/\mathcal{I}^{2}\to f^{*}\Omega^{1}_{\mathbb{B}^{n}_{A}/\spa{A}}$ is an isomorphism, by Proposition \ref{"conormal" sequence}. By Proposition \ref{closed immersion behaves well}, the ideal sheaf $\mathcal{I}$ is pseudocoherent, and is therefore determined by its global sections $I:=\mathcal{I}(\spa{\Tate{A}{n}})$, i.e., $\mathcal{I}=\widetilde{I}$. Of course, we know that $B = \Tate{A}{n}/I$.
	
	Since $I$ and $ \Tate{A}{n}$ are pseudocoherent $ \Tate{A}{n}$-modules, by the two-out-of-three property we get that $ \Tate{A}{n}/I$ is pseudocoherent, too. Since $I\subset \Tate{A}{n}$ is a closed ideal, both $I$ and $ \Tate{A}{n}/I$ actually belong to $\mathrm{PCoh_{\Tate{A}{n}}}$. 
	
	Let \[ \Tate{A}{n}^{\oplus m_{1}}\to\Tate{A}{n}^{\oplus m_{2}}\to I\to 0 \] be a right exact sequence of $\Tate{A}{n}$-modules. Note that all of them belong to $\mathrm{PCoh_{\Tate{A}{n}}}$. By the exact equivalence of categories from Theorem \ref{equivalence of categories pseudocoherent}, we get a right exact sequence of $\mathcal{O}_{\Tate{A}{n}}$-modules \[ \mathcal{O}_{\Tate{A}{n}}^{\oplus m_{1}}\to \mathcal{O}_{\Tate{A}{n}}^{\oplus m_{2}}\to \mathcal{I} \to 0. \]  Since the pullback functor $f^{*}\colon \mathrm{Mod}_{\mathcal{O}_{\Tate{A}{n}}}\to\mathrm{Mod}_{\mathcal{O}_{B}}$ is right exact, we get a right exact sequence of $\mathcal{O}_{B}$-modules \[ \mathcal{O}_{B}^{\oplus m_{1}}\to \mathcal{O}_{B}^{\oplus m_{2}}\to f^{*}\mathcal{I} \to 0. \] From what we already know, $ f^{*}\mathcal{I} $ is free of rank $n$, in particular, it is a vector bundle. By the exact equivalence of categories from Theorem \ref{equivalence of categories finite projective}, taking global sections yields a right exact sequence of finite projective $B$-modules \[ {B}^{\oplus m_{1}}\to {B}^{\oplus m_{2}}\to f^{*}\mathcal{I}(\spa{B}) \to 0. \] Clearly, the first arrow from above is the tensor of the first arrow from \[ \Tate{A}{n}^{\oplus m_{1}}\to\Tate{A}{n}^{\oplus m_{2}}\to I\to 0 \] with $B$, and since tensor product preserves cokernels, the global sections $f^{*}\mathcal{I}(\spa{B})$ is the tensor of $I$ with $B$, i.e., $f^{*}\mathcal{I}(\spa{B}) = I/I^{2}$, in particular, $I/I^{2}$ is a finite projective $B$-module. By the same exact equivalence of categories from Theorem \ref{equivalence of categories finite projective}, $f^{*}\mathcal{I} = \widetilde{I/I^{2}}$.

	We get an isomorphism $I/I^{2}\cong B^{\oplus n}$. Choose elements $f_{1},\dots, f_{n}\in I$ whose residues $\overline{f_{1}},\dots, \overline{f_{1}}\in I/I^{2}$ form a basis as of an $B$-module. By multiplying by an appropriate power of a pseudouniformizer, we can choose $f_{1},\dots, f_{n}\in I$ such that they lie inside $\Tate{A}{n}^{+}$, and we do that. Note that we cannot guarantee neither that $f_{1},\dots, f_{n}$ generate $I$ as a $\Tate{A}{n}$-module nor that $df_{1},\dots, df_{n} \in \Omega^{1}_{\mathbb{B}^{n}_{A}/\spa{A}}\cong \mathcal{O}_{\Tate{A}{n}}^{\oplus n}$ generate $\Omega^{1}_{\mathbb{B}^{n}_{A}/\spa{A}}$ as a $\Tate{A}{n}$-module\footnote{the latter is equivalent to $df_{1},\dots, df_{n}$ forming a basis of $\Omega^{1}_{\mathbb{B}^{n}_{A}/\spa{A}}$.}. However, by our choice, we know that the residues $\overline{f_{1}},\dots, \overline{f_{n}}$ generate $I/I^{2}$, while the residues $\overline{df_{1}},\dots, \overline{df_{n}}\in \Omega^{1}_{\mathbb{B}^{n}_{A}/\spa{A}}/I\Omega^{1}_{\mathbb{B}^{n}_{A}/\spa{A}}\cong f^{*}\Omega^{1}_{\mathbb{B}^{n}_{A}/\spa{A}}\cong \mathcal{O}_{B}^{\oplus n}$ generate $\Omega^{1}_{\mathbb{B}^{n}_{A}/\spa{A}}/I\Omega^{1}_{\mathbb{B}^{n}_{A}/\spa{A}}.$ Hence, for a finite $\Tate{A}{n}$-module $M$ equal to either $I$ or $\Omega^{1}_{\mathbb{B}^{n}_{A}/\spa{A}}$, we can apply the following version of Nakayama's lemma, see \cite[\href{https://stacks.math.columbia.edu/tag/00DV}{Tag 00DV}]{Stacks}, bullet (7).
	
	\begin{lemma}\label{Nakayama's lemma}
		Let $R$ be a ring and $I\subset R$ be an ideal. Let $M$ be a finite $R$-module. If elements $x_{1},\dots, x_{n}\in M$ generate $M/IM$, then there exists $g\in 1+I$ such that $x_{1},\dots, x_{n}$ generate $M[\frac{1}{g}]$ over $R[\frac{1}{g}]$.
	\end{lemma}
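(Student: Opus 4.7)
The plan is to reduce the statement to a standard application of the determinant trick (Cayley--Hamilton in disguise). Let $N \subset M$ denote the submodule generated by $x_1, \dots, x_n$, and set $Q := M/N$. By assumption $x_1, \dots, x_n$ generate $M/IM$, which translates to $M = N + IM$, equivalently $Q = IQ$. Moreover $Q$ is a finite $R$-module since it is a quotient of the finite $R$-module $M$. The problem thus reduces to producing $g \in 1+I$ with $gQ = 0$, for then $Q[\tfrac{1}{g}] = 0$, and the exact sequence $0 \to N \to M \to Q \to 0$ localized at $g$ shows that $x_1, \dots, x_n$ generate $M[\tfrac{1}{g}]$ over $R[\tfrac{1}{g}]$.

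For the main step I would pick finitely many generators $y_1, \dots, y_m$ of $Q$. Since $Q = IQ$, each $y_j$ can be written as $y_j = \sum_{i=1}^m a_{ij} y_i$ with $a_{ij} \in I$. Let $A = (a_{ij})$ be the corresponding $m \times m$ matrix over $R$; the relations give $(E_m - A)\vec{y} = 0$, where $E_m$ is the identity matrix. Multiplying by the adjugate of $E_m - A$ yields $\det(E_m - A)\, y_j = 0$ for every $j$, hence $\det(E_m - A)$ annihilates $Q$. Finally, expanding the determinant and using $a_{ij} \in I$ shows $\det(E_m - A) \in 1 + I$, so we may take $g := \det(E_m - A)$.

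There is no real obstacle here: everything happens in ordinary commutative algebra, with no interaction with the sous-perfectoid setting or topology. The only subtlety worth flagging is that the construction of $g$ depends on the choice of generators $y_1, \dots, y_m$ of $Q$ (not of $x_1, \dots, x_n$), but this is immaterial for the conclusion. As noted, this is exactly the version of Nakayama recorded at Stacks Tag 00DV, bullet (7), so one could alternatively just cite it; the sketch above is the one-paragraph proof.
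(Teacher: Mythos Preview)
Your proof is correct and is precisely the standard determinant-trick argument underlying Stacks Tag 00DV(7), which is exactly what the paper cites without spelling out a proof. There is nothing to add: your approach and the paper's reference coincide.
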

	
	Let $g^{\prime}\in 1+I$ be the element resulting from applying the above lemma to $M = I$, and let $g^{\prime\prime}\in 1+I$ be the element resulting from applying the above lemma to $M = \Omega^{1}_{\mathbb{B}^{n}_{A}/\spa{A}}$. Set $g = g^{\prime}g^{\prime\prime}$. Clearly $g\in 1+I$, and since localizing at $g$ is the same as localizing consecutively first at $g^{\prime}$ and then at $g^{\prime\prime}$\footnote{alternatively, we just know that $D(g)=D(g^{\prime})\cap D(g^{\prime\prime})$ on $\mathrm{Spec}\,\Tate{A}{n}$.}, we conclude that (simultaneously) $f_{1},\dots, f_{n}$ generate $I[\frac{1}{g}]$ and $df_{1},\dots, df_{n}$ generate ${\Omega^{1}_{\mathbb{B}^{n}_{A}/\spa{A}}}[\frac{1}{g}]$ over $\Tate{A}{n}[\frac{1}{g}]$.
	
	The elements $f_{1},\dots, f_{n}\in \Tate{A}{n}^{+}$ induce a map over $\spa{A}$ \[ \spa{\Tate{A}{n}} \to \mathrm{Spa}\,A\left\langle F_{1},\dots, F_{n}\right\rangle \] sending $F_{i}$ to $f_{i}$. Composing with rational localization at $g$, we get maps over $\spa{A}$ \[ \spa{\Tate{A}{n}}\left\langle \frac{1}{g}\right\rangle \to\spa{\Tate{A}{n}} \to \mathrm{Spa}\,A\left\langle F_{1},\dots, F_{n}\right\rangle. \] 
	
	Applying Lemma \ref{"conormal" sequence} to $X = \spa{A}$, $Y_{1} = \spa{\Tate{A}{n}}\left\langle \frac{1}{g}\right\rangle$, $ Y_{2} = \spa{\Tate{A}{n}}$, and knowing that rational localizations are \`etale, we see that \[ \Omega^{1}_{Y_{1}/X} \cong \Omega^{1}_{Y_{2}/X}\otimes_{\Tate{A}{n}}\Tate{A}{n}\left\langle \frac{1}{g}\right\rangle.\] Since $df_{1},\dots, df_{n}$ generate ${\Omega^{1}_{Y_{2}/X}}[\frac{1}{g}]$ over $\Tate{A}{n}[\frac{1}{g}]$, they will also generate \[\Omega^{1}_{Y_{1} /X} = \Omega^{1}_{Y_{2}/X}\otimes_{\Tate{A}{n}}\Tate{A}{n}\left\langle \frac{1}{g}\right\rangle = \Omega^{1}_{Y_{2}/X}[\frac{1}{g}]\otimes_{\Tate{A}{n}[\frac{1}{g}]}\Tate{A}{n}\left\langle \frac{1}{g}\right\rangle\] over $\Tate{A}{n}\left\langle \frac{1}{g}\right\rangle$. In particular, applying (the converse of) Lemma \ref{"conormal" sequence} to the composition over $\spa{A}$ \[ \spa{\Tate{A}{n}}\left\langle \frac{1}{g}\right\rangle \to \mathrm{Spa}\,A\left\langle F_{1},\dots, F_{n}\right\rangle, \] we get that this composition is \'etale.
	
	By Lemma \ref{regular sequence}, the elements $f_{1}, \dots, f_{n}$ define a regular sequence on $\Tate{A}{n}\left\langle \frac{1}{g}\right\rangle $ and the ideal $(f_{1},\dots, f_{n}) \subset \Tate{A}{n}\left\langle \frac{1}{g}\right\rangle$ generated by them is already closed. In fact, we do not even need to use Lemma \ref{regular sequence} to infer that this ideal is closed, as we can do the following instead. To avoid confusion, re-denote this ideal by $ (f_{1},\dots, f_{n})\left\langle \frac{1}{g}\right\rangle $. By our initial choice, $ (f_{1},\dots, f_{n})[\frac{1}{g}] = I[\frac{1}{g}]$ inside $\Tate{A}{n}[\frac{1}{g}]$. Therefore, $ (f_{1},\dots, f_{n})\left\langle \frac{1}{g}\right\rangle  = I\left\langle \frac{1}{g}\right\rangle $ inside $\Tate{A}{n}\left\langle \frac{1}{g}\right\rangle $, where $I\left\langle \frac{1}{g}\right\rangle $ is the ideal that all elements of $I$\footnote{equivalently, of $ I[\frac{1}{g}]$.} generate inside $\Tate{A}{n}\left\langle \frac{1}{g}\right\rangle $. As we have seen before, we have a short exact sequence in $\mathrm{PCoh_{\Tate{A}{n}}}$: \[ 0\to I \to \Tate{A}{n}\to \Tate{A}{n}/I\to 0. \] By Theorem \ref{pseudoflatness of rational localizations} applied to the rational localization $\Tate{A}{n}\to \Tate{A}{n}\left\langle \frac{1}{g}\right\rangle $, we obtain an exact sequence in $\mathrm{PCoh_{\Tate{A}{n}\left\langle \frac{1}{g}\right\rangle }}$: \[ \hspace*{-53pt}0\to I\otimes_{\Tate{A}{n}}\Tate{A}{n}\left\langle \frac{1}{g}\right\rangle \to \Tate{A}{n}\left\langle \frac{1}{g}\right\rangle \to (\Tate{A}{n}/I)\otimes_{\Tate{A}{n}}\Tate{A}{n}\left\langle \frac{1}{g}\right\rangle\to 0.  \] It is true for rings in general that $I\left\langle \frac{1}{g}\right\rangle $ (as defined beforehand) is the image of the first arrow of the latter sequence. This justifies writing simply \[ I\left\langle \frac{1}{g}\right\rangle = I\otimes_{\Tate{A}{n}}\Tate{A}{n}\left\langle \frac{1}{g}\right\rangle.\] Since all modules in the latter sequence are complete (by definition of $\mathrm{PCoh}$), the ideal $I\left\langle \frac{1}{g}\right\rangle$ is closed, but we know that this is the same as the ideal $(f_{1},\dots, f_{n})\left\langle \frac{1}{g}\right\rangle$. Finally, the short exact sequence also yields the equality \[\hspace*{-23pt} \Tate{A}{n}/I\left\langle \frac{1}{g}\right\rangle:= (\Tate{A}{n}/I)\otimes_{\Tate{A}{n}}\Tate{A}{n}\left\langle \frac{1}{g}\right\rangle = \Tate{A}{n}\left\langle \frac{1}{g}\right\rangle/I\left\langle \frac{1}{g}\right\rangle.\]
	
	Actually, when the element $g$ is treated like an element of $\Tate{A}{n}/I$ (by abuse of notation), the expression $\Tate{A}{n}/I\left\langle \frac{1}{g}\right\rangle$ starts making sense as the rational localization of the Huber ring $\Tate{A}{n}/I$ at $g$. It is clear that this interpretation coincides with what we already have. But $g\in 1+I$, and so it equals $1$ in $\Tate{A}{n}/I$, yielding $\Tate{A}{n}/I\left\langle \frac{1}{g}\right\rangle = \Tate{A}{n}/I.$ 
	 
	 It allows us to write
	 
	 \begin{align*}
		B = \Tate{A}{n}/I = \Tate{A}{n}/I\left\langle \dfrac{1}{g}\right\rangle &= \Tate{A}{n}\left\langle \dfrac{1}{g}\right\rangle/I\left\langle \dfrac{1}{g}\right\rangle\\ &= \Tate{A}{n}\left\langle \dfrac{1}{g}\right\rangle/(f_{1},\dots, f_{n})\left\langle \dfrac{1}{g}\right\rangle\\ &=  \Tate{A}{n}\left\langle \dfrac{1}{g}\right\rangle/(f_{1},\dots, f_{n})\\ &= {A\left\langle X_{1},\dots, X_{n}, X\right\rangle/(gX-1)/(f_{1},\dots, f_{n})}\\ &= {A\left\langle X_{1},\dots, X_{n}, X\right\rangle/(f_{1},\dots, f_{n}, gX-1)},
	 \end{align*}
	where in the second to last equality we are using the fact that $A$ is uniform to say that the ideal generated by $gX-1$ is already closed. Thus, we have shown that we can choose a presentation of $B$ where the ideal is generated by the correct number of elements. If we are allowed to use that $f_{1},\dots, f_{n}$ is a regular sequence, then $f_{1},\dots, f_{n}, gX-1$ is a regular sequence, and the ideal is moreover generated by a regular sequence.
	
	Here is a commutative diagram of the situation, in which every square is cartesian (everything is over $\spa{A}$):
	\[ \begin{tikzcd}
		V(I) \arrow[r, "id"] \arrow[d, "\acute{e}tale"]& V(I) \arrow[d]\arrow[dr, bend left, "\acute{e}tale"]& \ph\\
		\spa{\Tate{A}{n}\left\langle \dfrac{1}{g}\right\rangle}/(f_{1},\dots, f_{n}) \arrow[r, "\acute{e}tale"] \arrow[d]& V(f_{1},\dots, f_{n})\arrow[r]\arrow[d] & \spa{A}\arrow[d, "0"]\\
		\spa{\Tate{A}{n}\left\langle \dfrac{1}{g}\right\rangle}\arrow[r, "\acute{e}tale"] \arrow[rr, bend right, "\acute{e}tale"]& \spa{\Tate{A}{n}}\arrow[r] & \spa{A\left\langle F_{1},\dots, F_{n}\right\rangle}\\
	\end{tikzcd}. \]
	For example, $V(I) = \spa{B}$, and it is \'etale over $\spa{A}$ by assumption. The arrow $V(I) \to \spa{\Tate{A}{n}\left\langle \frac{1}{g}\right\rangle}/(f_{1},\dots, f_{n}) $ is \'etale by the cancellation property of \'etale maps, as both $V(I)$ and $\spa{\Tate{A}{n}\left\langle \frac{1}{g}\right\rangle}/(f_{1},\dots, f_{n}) $ are \'etale over $\spa{A}$. In fact, as we have shown, this arrow is an isomorphism.
	
	\section{Proof of noetherian approximation}\label{section proof of noetherian approximation}
	
	In this section we prove Proposition \ref{noetherian approximation}.  For this, we need to state a fuller version\footnote{for the fullest (and original) version, see \cite[Proposition 1.7.1]{Hu96}, and see \cite[Assumption 1.1.1]{Hu96} for the noetherian assumpitons that are in effect.} of Proposition 1.7.1 from \cite{Hu96}:
	
	\begin{proposition}\label{fuller proposition 1.7.1}
		Let $\pair{A}, \pair{B}$ be Huber pairs with $A, B$ complete Tate and strongly noetherian. Let $g\colon\Spa{B}\to\Spa{A}$ be a map of affinoid adic spaces. Then $g$ is \'etale if and only if there exists a presentation \[ \pair{B}\cong \pair{\Tate{A}{n}/(f_{1},\dots,f_{n})} \] such that the determinant of the associated Jacobian matrix $\mathrm{det}(\frac{\partial f_{i}}{\partial X_{j}})_{1\leqslant i, j\leqslant n}$ is a unit in $B$.
		
		In this case, one can choose $f_{1},\dots, f_{n}$ to be polynomials, i.e., elements of $A[X_{1},\dots, X_{n}]$.
	\end{proposition}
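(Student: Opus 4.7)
My plan is to prove the proposition in two stages: the core Jacobian criterion (which is the content of \cite[Proposition 1.7.1]{Hu96}) and then the polynomial refinement.

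The ``if'' direction of the Jacobian criterion is immediate from the classical Proposition \ref{proposition 0}: given polynomials $f_i \in A[X_1,\dots, X_n]$ with Jacobian a unit in $B$, the algebraic $A$-algebra $B_0 := A[X_1,\dots, X_n]/(f_1,\dots, f_n)$ is étale, and its $\varpi$-adic completion is $\Tate{A}{n}/(f_1,\dots,f_n)$ (using that $(f_i)$ is automatically closed in $\Tate{A}{n}$ under the strong noetherian hypothesis); étaleness survives this completion by standard noetherian arguments. The ``only if'' direction would follow the same strategy as Proposition \ref{upgraded main proposition}: first construct a global closed immersion $\Spa{B}\to\Spa{\Tate{A}{N}}$ as in subsection \ref{subsection global closed immersion}, then reduce to exactly $n$ equations using the conormal isomorphism $\mathcal{I}/\mathcal{I}^2 \cong f^*\Omega^1$ together with Nakayama, as in subsection \ref{subsection global complete intersection}. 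The noetherian setting simplifies matters: closed ideals of $\Tate{A}{n}$ are automatically finitely generated, so the pseudocoherent sheaf machinery from \cite{KL16} can be replaced throughout by Huber's coherent ideal sheaves.

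For the polynomial refinement, my plan is to argue by approximation. Fix a pair of definition $(A_0, I_0)$ of $A$ with pseudouniformizer $\varpi$. Since $A[X_1,\dots, X_n]$ is dense in $\Tate{A}{n}$, I can choose polynomials $\tilde{f}_i \in A[X_1,\dots, X_n]$ with $\tilde{f}_i - f_i \in \varpi^N\Tate{A_0}{n}$ for large $N$. Writing $x_j \in B$ for the image of $X_j$, the tuple $x = (x_1,\dots, x_n)$ is an approximate root of $\tilde{f}$: $\tilde{f}_i(x) = (\tilde{f}_i - f_i)(x) \in \varpi^N B$. The Jacobian of $\tilde{f}$ at $x$ differs from the (unit) Jacobian of $f$ at $x$ by an element of $\varpi^N B$, so it remains a unit in $B$ for $N$ large, since units are open in the Banach $A$-algebra $B$. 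The multivariable Hensel's lemma in the $\varpi$-adically complete $B$ then yields an exact root $y = (y_1,\dots, y_n)\in B^n$ of $\tilde{f}$ close to $x$, and the Jacobian remains a unit at $y$. Thus we obtain a continuous $A$-algebra map $\hat{\psi}\colon \Tate{A}{n}/(\tilde{f}_1,\dots,\tilde{f}_n)\to B$, $X_i\mapsto y_i$, whose source is étale over $A$ by the ``if'' direction just proven.

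It remains to show $\hat{\psi}$ is an isomorphism. Surjectivity follows by a Lemma \ref{Huber's lemma}-type perturbation argument, since the $y_i$ are small perturbations of the topological generators $x_i$ of $B$ over $A$. Injectivity then follows because both source and target of $\hat{\psi}$ are étale $A$-algebras of matching rank, and an étale surjection between such is an isomorphism. I expect this final step to be the main obstacle: both the surjectivity perturbation lemma and the étale-rank-matching argument rely on the noetherian hypothesis (to make sense of ``rank'' and to invoke Nakayama cleanly), and combining them with the Hensel approximation requires careful bookkeeping of the successive approximations and of the topologies involved, but the templates are standard.
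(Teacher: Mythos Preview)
The paper does not give a self-contained proof of this proposition; it is stated as a citation of \cite[Proposition 1.7.1]{Hu96}. However, in the proof block immediately following (for the sous-perfectoid analogue, Proposition \ref{refined upgraded main proposition}), the paper reveals the essential mechanism of Huber's argument for the polynomial refinement: one constructs an \emph{endomorphism} $f\colon B\to B$ of the complete Huber ring $B$ and shows it is bijective. Surjectivity of the induced map $f_0\colon B_0\to B_0$ on a ring of definition comes from \cite[III.2.9 Cor.~3]{B} (successive approximation), and injectivity is then automatic because every surjective endomorphism of a noetherian ring is injective.

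Your approximation strategy via multivariable Hensel is reasonable in spirit, but it diverges from Huber's at exactly the point you flagged as the obstacle. You construct a map $\hat\psi\colon \Tate{A}{n}/(\tilde f_1,\dots,\tilde f_n)\to B$ between two \emph{a priori different} rings, not an endomorphism of $B$. Consequently the clean noetherian injectivity argument (``surjective endomorphism is injective'') is unavailable to you, and your proposed substitute --- that source and target are ``\'etale $A$-algebras of matching rank'' --- does not work as stated: these algebras need not be finite over $A$, so there is no rank to match, and a surjective \'etale map of affinoids need not be injective (e.g.\ a projection $A\times A\to A$). One could try to rescue this by arguing that $\hat\psi$ is both a closed and an open immersion on spectra, hence splits off a factor, and then rule out the extra factor; but this is more delicate than what you wrote and is not the route Huber takes. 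The cleaner fix is to reorganize the approximation so that the perturbation produces a self-map of $B$ (or of $\Tate{A}{n}$) rather than a map from a new quotient, which is precisely what Huber does and what underlies Corollary \ref{corollary}.
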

	
	We claim that the analogous refinement holds in our setting (which we prove slightly later):
	
	\begin{proposition}\label{refined upgraded main proposition}
		Let $g\colon\Spa{B}\to\Spa{A}$ be a map of affinoid sous-perfectoid spaces. Then $g$ is \'etale if and only if there exists a presentation \[ \pair{B}\cong \pair{\Tate{A}{n}/(f_{1},\dots,f_{n})} \] such that the determinant of the associated Jacobian matrix $\mathrm{det}(\frac{\partial f_{i}}{\partial X_{j}})_{1\leqslant i, j\leqslant n}$ is a unit in $B$.
		
		In this case, one can choose $f_{1},\dots, f_{n}$ to be polynomials, i.e., elements of $A[X_{1},\dots, X_{n}]$.
	\end{proposition}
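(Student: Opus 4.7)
The ``if'' direction will follow immediately by specialization from the ``if'' direction of Proposition \ref{upgraded main proposition}, so it suffices to establish the polynomial refinement in the ``only if'' direction. I would start from the presentation $\pair{B}\cong\pair{\Tate{A}{n}/(f_1,\dots,f_n)}$ provided by Proposition \ref{upgraded main proposition}, with Jacobian determinant a unit in $B$ but \emph{a priori} only with $f_i\in\Tate{A}{n}$, and then approximate each $f_i$ by a polynomial $p_i\in A[X_1,\dots,X_n]$ sufficiently closely, aiming to produce a natural isomorphism $\tilde{B}:=\Tate{A}{n}/(p_1,\dots,p_n)\cong B$ over $\pair{A}$.

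Fix a pseudouniformizer $\varpi\in A$. By density of $A[X_1,\dots,X_n]$ inside $\Tate{A}{n}$, for any prescribed $N$ I can choose polynomials $p_i$ with $p_i - f_i \in \varpi^N\Tate{A}{n}^+$. Since being a unit is an open condition in the Tate topology, for $N$ large enough $\det(\partial p_i/\partial X_j)$ will remain a unit in $\tilde{B}$, so that the ``if'' direction of Proposition \ref{upgraded main proposition} will already imply that $\Spa{\tilde{B}}\to\Spa{A}$ is \'etale. To identify $\tilde{B}$ with $B$, I observe that under the projection $\Tate{A}{n}\twoheadrightarrow B$ each $p_i$ maps into $\varpi^N B^+$, so $p_i$ reduces to zero in $B/(\varpi^N)$. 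Since the Jacobian of $(p_i)$ is a unit in $B$, a Hensel/Newton-type argument should then allow one to find unique $x_i'\in B^+$ close to the image of $X_i$ modulo a controlled power of $\varpi$ and satisfying $p_i(x_1',\dots,x_n')=0$ in $B$. This should provide a morphism $\tilde{B}\to B$ of Huber pairs over $\pair{A}$; a symmetric construction and the uniqueness of the Hensel lift would then force the two maps to be mutually inverse.

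The hard part will be making the Hensel/Newton step precise in the sous-perfectoid (hence non-noetherian) setting: because sous-perfectoid rings are reduced, the classical formal-\'etale lifting of nilpotent thickenings is unavailable, and one has to work instead with genuine $\varpi^N$-small perturbations, verifying convergence in the Tate topology while respecting the integral subrings $B^+$ and $\tilde{B}^+$. An alternative, more concrete route I would consider is to iteratively refine the polynomial approximations so that the closed ideals $(p_1,\dots,p_n)$ and $(f_1,\dots,f_n)$ actually coincide inside $\Tate{A}{n}$, using the Jacobian-unit hypothesis together with a Nakayama-type reduction modulo $I/I^2$ in the spirit of Lemma \ref{Nakayama's lemma} and Lemma \ref{Huber's lemma}; in that scenario the identification $\tilde{B}=B$ would hold tautologically, and the main challenge shifts to proving the convergence of the iteration.
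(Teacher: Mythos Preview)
Your strategy is essentially the same as the paper's, in the sense that Huber's proof of \cite[Proposition 1.7.1, Corollary 1.7.2]{Hu96} proceeds exactly along the lines you sketch: perturb the $f_i$ to nearby polynomials $p_i$, and then use a successive-approximation (Newton-type) argument based on the invertibility of the Jacobian to build an isomorphism $\tilde{B}\cong B$ over $\pair{A}$. The paper does not rewrite this argument but simply observes that Huber's proof carries over to the sous-perfectoid setting almost verbatim.

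The substantive difference is that the paper pinpoints the \emph{single} place in Huber's argument where the noetherian hypothesis is actually invoked, and explains how to remove it. In part (3) of Huber's proof of ((ii)$\Rightarrow$(iii)), one must show that a certain continuous endomorphism $f\colon B\to B$ is bijective; Huber gets surjectivity from \cite[III.2.9 Cor.~3]{B} applied to a ring of definition $B_0$, and then gets injectivity from the noetherian fact that surjective ring endomorphisms are injective. The paper observes that \cite[III.2.9 Cor.~3]{B} already yields that $f|_{B_0}\colon B_0\to B_0$ is a \emph{bijection}, and since $A$ (hence $B$) is Tate, one recovers bijectivity of $f$ on all of $B$ by inverting a pseudouniformizer. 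Everything else in Huber's proof is insensitive to noetherian hypotheses.

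This directly answers the concern you flag as ``the hard part'': the Hensel/Newton step does not require any new idea in the sous-perfectoid setting, nor does one need reducedness or a Nakayama-style iteration on ideals; one only needs to replace the ``surjective endomorphism of a noetherian ring is injective'' step by the observation above. So rather than developing either of your two alternative routes from scratch, you can simply trace through Huber's existing argument and make this one substitution.
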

	
	In addition, we invoke Corollary 1.7.2 from \cite{Hu96}:
	
	\begin{corollary}[{\cite[Corollary 1.7.2]{Hu96}}]
		Let $\pair{A}$ be a Huber pair with $A$ complete Tate and strongly noetherian. Let \[ \pair{B}:= \pair{\Tate{A}{n}/(f_{1},\dots,f_{n})} \] such that the determinant of the associated Jacobian matrix $\mathrm{det}(\frac{\partial f_{i}}{\partial X_{j}})_{1\leqslant i, j\leqslant n}$ is a unit in $B$. Then there exists a neighbourhood $S$ of $0$ in $\Tate{A}{n}$ such that if $f_{1}^{\prime} \in f_{1} + S, \dots, f_{n}^{\prime}\in f_{n}+S$ then the determinant of the Jacobian matrix $\mathrm{det}(\frac{\partial f_{i}^{\prime}}{\partial X_{j}})_{1\leqslant i, j\leqslant n}$ is a unit in \[ \pair{B^{\prime}}:= \pair{\Tate{A}{n}/(f_{1}^{\prime},\dots,f_{n}^{\prime})} \] and there exists an isomorphism $ \Spa{B}\cong \Spa{B^{\prime}}$ over $\Spa{A}$.
	\end{corollary}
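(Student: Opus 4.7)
My plan rests on two principles: (a) in a Tate ring the unit group is open, so small perturbations of the $f_{i}$ preserve the Jacobian-unit condition; (b) once the Jacobian is a unit, one can run a Newton-type contraction in the Tate topology to lift approximate roots to exact ones, which produces the isomorphism $\Spa{B}\cong \Spa{B'}$ over $\Spa{A}$.

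First I would dispose of the unit-Jacobian conclusion. Write $J(f) := \mathrm{det}(\frac{\partial f_{i}}{\partial X_{j}}) \in \Tate{A}{n}$. By hypothesis, the image of $J(f)$ in $B$ is a unit. Since the formation of $J$ involves only finitely many ring operations and those are continuous on $\Tate{A}{n}$, and since $B^{\times}\subset B$ is open, a small enough neighbourhood $S$ of $0$ guarantees that for any $f_{i}' \in f_{i}+S$, the image of $J(f')$ in $B'=\Tate{A}{n}/(f_{1}',\dots,f_{n}')$ remains a unit. Note that since $A$ is strongly noetherian the ideals $(f_{1},\dots,f_{n})$ and $(f_{1}',\dots,f_{n}')$ are automatically closed, so $B$ and $B'$ are themselves complete Tate rings.

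To construct the isomorphism, I would build an $A$-algebra map $\varphi\colon B\to B'$ by finding $y_{1},\dots,y_{n}\in B'$ with $f_{i}(y_{1},\dots,y_{n})=0$ in $B'$ for all $i$, close to the images of the $X_{i}$. Starting from $y^{(0)}=(\overline{X_{1}},\dots,\overline{X_{n}})\in (B'^{+})^{n}$, the initial error $f_{i}(y^{(0)})$ equals the image in $B'$ of $f_{i}-f_{i}' \in S$; it can therefore be made to lie in an arbitrarily small pair-of-definition ideal by further shrinking $S$. Since the image of $J(f)$ in $B'$ is also a unit (by closeness to $J(f')$ together with openness of units), the Jacobian matrix of $(f_{1},\dots,f_{n})$ at $y^{(0)}$ is invertible over $B'$. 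The Newton iteration
\[ y^{(k+1)} := y^{(k)}-\Bigl(\frac{\partial f_{i}}{\partial X_{j}}(y^{(k)})\Bigr)^{-1}\cdot\bigl(f_{1}(y^{(k)}),\dots,f_{n}(y^{(k)})\bigr) \]
then contracts in the Tate topology and converges to a fixed point $y = (y_{1},\dots,y_{n})$ with $f_{i}(y)=0$. The assignment $X_{i}\mapsto y_{i}$ factors uniquely through $\varphi\colon B\to B'$. Reversing the roles of the primed and unprimed presentations—possibly after shrinking $S$ once more—yields $\psi\colon B'\to B$. The compositions $\psi\circ\varphi$ and $\varphi\circ\psi$ are continuous $A$-algebra endomorphisms sending each $X_{i}$ to a Newton fixed point near itself; by the uniqueness part of the Hensel/Newton argument (again powered by the Jacobian being a unit), they must coincide with the identity.

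The main obstacle I expect is the careful Newton/Hensel bookkeeping: one must pin down how small $S$ must be so that (i) the iteration genuinely contracts inside a pair of definition of $B'$, (ii) the resulting $y_{i}$ lie in $B'^{+}$, so that $\varphi$ is a map of Huber \emph{pairs} (this is then automatic from the integral-closure convention in Lemma \ref{quotient pair} once the underlying ring map is in place), and (iii) the Newton fixed point is unique in a suitable neighbourhood, which is precisely what forces $\psi\circ\varphi=\mathrm{id}$ and $\varphi\circ\psi=\mathrm{id}$. Strong noetherianity is used mainly to ensure that the ideals at hand are closed and the quotient topologies well-behaved, so that the Banach-type fixed-point argument can be run verbatim.
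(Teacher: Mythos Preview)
The paper does not supply its own proof of this corollary: it is quoted verbatim from \cite[Corollary~1.7.2]{Hu96} as an input, and the adjacent proof block only argues that Huber's proof of Proposition~1.7.1 (and implicitly of its corollary) transfers unchanged to the sous-perfectoid setting, the sole noetherian-specific step being the passage from surjectivity to bijectivity of a certain endomorphism. So there is no independent in-paper argument to compare against beyond the pointer to Huber.

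Your Newton/Hensel sketch is the standard route and is, in outline, what underlies Huber's proof. The plan is sound, and the points you flag as obstacles (contraction inside a ring of definition, integrality of the $y_{i}$, uniqueness of the fixed point forcing $\psi\circ\varphi=\mathrm{id}$) are precisely where the bookkeeping lives; none of them conceals a genuine obstruction in the complete Tate strongly noetherian setting. One small clarification worth making explicit: when you assert that the image of $J(f)$ in $B'$ is a unit ``by closeness to $J(f')$'', you are really comparing $J(f)$ and $J(f')$ as elements of $\Tate{A}{n}$ before passing to the quotient, so spell out that $J(f)-J(f')$ lies in $S'$ for some small $S'$ (continuity of the determinant), hence is small in $B'$, and then invoke openness of $(B')^{\times}$. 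Likewise, your uniqueness-of-fixed-point step deserves a line: the Newton map is a contraction on a suitable ball in $(B'^{+})^{n}$, so any two fixed points in that ball coincide, and both $\psi\circ\varphi(X_{i})$ and $X_{i}$ are such fixed points.
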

	
	We claim that the analogous corollary holds in our setting (which we prove slightly later):
	
	\begin{corollary}[{\cite[Corollary 1.7.2]{Hu96}}]\label{corollary}
		Let $g\colon\Spa{B}\to\Spa{A}$ be a map of affinoid sous-perfectoid spaces, where $A$ is Tate. Choose a presentation \[ \pair{B}:= \pair{\Tate{A}{n}/(f_{1},\dots,f_{n})} \] such that the determinant of the associated Jacobian matrix $\mathrm{det}(\frac{\partial f_{i}}{\partial X_{j}})_{1\leqslant i, j\leqslant n}$ is a unit in $B$. Then there exists a neighbourhood $S$ of $0$ in $\Tate{A}{n}$ such that if $f_{1}^{\prime} \in f_{1} + S, \dots, f_{n}^{\prime}\in f_{n}+S$ then the determinant of the Jacobian matrix $\mathrm{det}(\frac{\partial f_{i}^{\prime}}{\partial X_{j}})_{1\leqslant i, j\leqslant n}$ is a unit in \[ \pair{B^{\prime}}:= \pair{\Tate{A}{n}/(f_{1}^{\prime},\dots,f_{n}^{\prime})} \] and there exists an isomorphism $ \Spa{B}\cong \Spa{B^{\prime}}$ over $\Spa{A}$.
	\end{corollary}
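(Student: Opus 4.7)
The plan is to mirror Huber's proof of \cite[Corollary 1.7.2]{Hu96} from the noetherian setting. It proceeds in two steps: first verify that the Jacobian determinant of $f'$ remains a unit in $B'$ for $S$ small enough, then construct an isomorphism $\pair{B'} \cong \pair{B}$ over $\pair{A}$ by a Newton--Hensel iteration performed inside the complete Tate ring $B$.

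For the first step, let $\delta = \mathrm{det}(\partial f_{i}/\partial X_{j}) \in \Tate{A}{n}$ and let $\delta'$ denote the analogous determinant for $f'$. Since $\delta$ is a unit in $B$, there exist $a, b_{1}, \dots, b_{n} \in \Tate{A}{n}$ with $a \delta + \sum b_{i} f_{i} = 1$. If $f_{i}' = f_{i} + g_{i}$ with $g_{i} \in S$, then $\delta'$ is a polynomial expression in the partial derivatives of $f_{i}'$, so differs from $\delta$ by an element controlled in magnitude by $S$, and likewise $\sum b_{i}(f_{i}' - f_{i})$ is small. Hence $a \delta' + \sum b_{i} f_{i}' = 1 + (\mathrm{small})$ is a unit in $\Tate{A}{n}$ once $S$ is small enough, since the units form an open subset of the complete Tate ring. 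This shows $\delta'$ is a unit in $B' = \Tate{A}{n}/(f_{1}', \dots, f_{n}')$.

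For the second step, let $\bar{X}_{j} \in B$ denote the image of $X_{j}$. The Jacobian matrix $J(\bar{X}) = (\partial f_{i}/\partial X_{j})(\bar{X})$ is invertible in $M_{n}(B)$. We seek $h = (h_{1}, \dots, h_{n}) \in B^{n}$ with $f'(\bar{X} + h) = 0$ via the simplified Newton iteration $h^{(k+1)} = h^{(k)} - J(\bar{X})^{-1} f'(\bar{X} + h^{(k)})$, starting from $h^{(0)} = 0$. The initial value $f'(\bar{X}) = (f' - f)(\bar{X})$ is small (a convergent power series with small coefficients evaluated at power-bounded elements), and the iteration is a contraction near $0$ since its differential at $0$ is $I - J(\bar{X})^{-1} J_{f'}(\bar{X})$, which is close to $0$ when $f' - f$ is small. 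Hence it converges in the complete Tate ring $B$ to some $h$ with $\bar{X}_{j} + h_{j} \in B^{+}$, yielding a map of Huber pairs $\varphi\colon \pair{B'} \to \pair{B}$ sending $X_{j} \mapsto \bar{X}_{j} + h_{j}$. Applying the symmetric argument inside $B'$ (using the first step to invert the Jacobian of $f'$ there) produces $\psi\colon \pair{B} \to \pair{B'}$, and uniqueness of the Newton fixed point forces $\varphi \circ \psi = \mathrm{id}_{B}$ and $\psi \circ \varphi = \mathrm{id}_{B'}$: for instance, $\varphi \circ \psi$ is an $A$-algebra endomorphism of $B$ sending $\bar{X}_{j}$ to a small perturbation of $\bar{X}_{j}$ solving $f_{i}(\cdot) = 0$, hence by the uniqueness statement must equal $\bar{X}_{j}$.

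The main technical obstacle is the Banach-theoretic bookkeeping: choosing $S$ small enough so that all Newton iterates remain in the ball where the contraction estimate and the inclusion into $B^{+}$ hold simultaneously, and tracking that the relevant smallness conditions propagate through both the $B$-side and the $B'$-side constructions. A related subtlety is ensuring the finitely generated ideal $(f_{1}', \dots, f_{n}') \subset \Tate{A}{n}$ is closed, so that $\pair{B'}$ is a well-defined Huber pair; this can be deduced a posteriori from the isomorphism $\pair{B'} \cong \pair{B}$ (completeness of $B$ forces completeness of the quotient defining $B'$), or handled a priori by passing to the closure of the ideal throughout.
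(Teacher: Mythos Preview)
Your proposal is correct and follows the same overall route as the paper: both defer to Huber's Newton--Hensel argument for \cite[Corollary 1.7.2]{Hu96} and check that the Tate hypothesis suffices in place of noetherianness. The paper's treatment is terser: it simply asserts that Huber's proof goes through unchanged except at one point (in the companion Proposition 1.7.1), where Huber deduces injectivity of an endomorphism $f\colon B\to B$ from its surjectivity via the noetherian trick; the paper replaces this by observing that Bourbaki's successive-approximation result \cite[III.2.9 Cor.~3]{B} already yields \emph{bijectivity} of the induced map $f_{0}\colon B_{0}\to B_{0}$ on a ring of definition, whence bijectivity on $B=B_{0}[1/\varpi]$ by the Tate hypothesis. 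Your variant---constructing both $\varphi$ and $\psi$ by separate Newton iterations and then invoking local uniqueness of the fixed point to conclude $\varphi\circ\psi=\mathrm{id}$ and $\psi\circ\varphi=\mathrm{id}$---sidesteps that particular step altogether and is equally valid; it is the standard ``two-sided inverse'' packaging of the same contraction-mapping input. Your remark about closedness of $(f_{1}',\dots,f_{n}')$ is a genuine point that the paper leaves implicit here (it is handled elsewhere via the regular-sequence lemma for smooth maps over sous-perfectoid bases).
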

	
	We now prove Proposition \ref{refined upgraded main proposition} and Corollary \ref{corollary}.
	\begin{proof}
		We claim that almost nothing has to be changed from Huber's proof of Proposition \ref{fuller proposition 1.7.1} in order to get Proposition \ref{refined upgraded main proposition}. In fact, the only place where noetherian assumptions are used in the proof of Proposition 1.7.1 from \cite{Hu96} is inside part (3) of the proof of the implication ((ii) $\implies$ (iii)). There, one needs to show that some endomorphism $f\colon B\to B$ of a complete Huber ring $B$ is bijective; surjectivity of $f$ is shown by arguing that the endomorphism $f_{0}\colon B_{0}\to B_{0}$ induced on a ring of definition $B_{0}$ is surjective, by applying \cite[III.2.9 Cor. 3]{B}, while injectivity of $f$ becomes automatic as every surjective endomorphism of a noetherian ring is injective. However, \cite[III.2.9 Cor. 3]{B} already implies bijectivity of $f_{0}\colon B_{0}\to B_{0}$ (not merely surjectivity). Consequently, we can deduce bijectivity of $f$ (without using the noetherian assumption) by multiplying/dividing by a power of a pseudouniformizer of $B$ in case it is Tate, which it is by our assumption that $A$ is Tate. The rest of Huber's proof does not use any noetherian assumptions, and we have nothing to add or subtract.
	\end{proof}
	
	We now prove Proposition \ref{noetherian approximation}.
	
	\begin{proof}
		Assume we are in the context of Proposition \ref{noetherian approximation}, with the same notation. Choose a presentation  \[ \pair{B}\cong \pair{\Tate{R}{n}/(f_{1},\dots,f_{n})} \] with $f_{1},\dots, f_{n}\in R[X_{1},\dots, X_{n}]$, which is possible by Proposition \ref{refined upgraded main proposition}. By multiplying with $\varpi$, we WLOG assume that the coefficients of $f_{1},\dots, f_{n}$ belong to $R^{+}$. By Corollary \ref{corollary}, we are allowed to perturb $f_{1}, \dots, f_{n}$ without changing the quotient $\pair{\Tate{R}{n}/(f_{1},\dots,f_{n})}$. Since $\varinjlim_{i} R_{i}^{+} $ is dense in $R^{+}$, by perturbing we may assume that all coefficients of $f_{1},\dots, f_{n}$ are elements of $R_{i^{\prime}}^{+}$ for finitely many $i^{\prime}\in I$. Then all the coefficients of  $f_{1},\dots, f_{n}$ are already defined over some single $R_{i}^{+}$ for some $i\in I$, by filteredness of $I$. Setting \[ \pair{B_{i}}:= \pair{\Tate{R_{i}}{n}/(f_{1},\dots,f_{n})} \] finishes the proof.
	\end{proof}
	
	\section{The sheaf of differentials}\label{section the sheaf of differentials}
	
	In this section we will give a local description of the sheaf of differentials defined in \cite{FS} in terms of the module of differentials. We prove Proposition \ref{description of the sheaf of differentials}; we remind its statement below. 
	
	\begin{proposition*}[Proposition \ref{description of the sheaf of differentials}]
		Let $f\colon Y\to X$ be a smooth map of sous-perfectoid adic spaces. For any open affinoids $\spa{B}\subset Y$ and $\spa{A}\subset X$ with $f(\spa{B})\subset \spa{A}$ holds \[ \Omega^{1}_{Y/X}\mid_{\spa{B}}\cong \Omega^{1}_{\spa{B}/\spa{A}}\cong \widehat{\Omega^{1}_{B/A}}, \]
		where $\widehat{\Omega^{1}_{B/A}}$ is the usual module of differentials completed with respect to a suitable topology (defined later).
	\end{proposition*}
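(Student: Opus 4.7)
The plan is to prove the two isomorphisms in the statement separately: first the localisation isomorphism $\Omega^{1}_{Y/X}|_{\spa{B}}\cong \Omega^{1}_{\spa{B}/\spa{A}}$, then the affinoid identification $\Omega^{1}_{\spa{B}/\spa{A}}\cong \widehat{\Omega^{1}_{B/A}}$. The first is essentially formal compatibility of Definition \ref{definition sheaf of differentials} with restriction to open affinoid subspaces, while the second requires identifying the pseudocoherent ideal sheaf of the diagonal with a completion of the classical module of K\"ahler differentials.

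For the first isomorphism, I would observe that the inclusions $\spa{B}\subset Y$ and $\spa{A}\subset X$ induce an open immersion $\spa{B}\times_{\spa{A}}\spa{B}\hookrightarrow Y\times_{X}Y$ whose image is an open neighbourhood of the diagonal $\Delta_{f}(\spa{B})$. Since the ideal sheaf of the diagonal is defined via a closed immersion and its formation commutes with restriction to open subspaces, one gets $\mathcal{I}_{Y/X}|_{\spa{B}\times_{\spa{A}}\spa{B}}=\mathcal{I}_{\spa{B}/\spa{A}}$; passing to the quotient by the square (exact on open subspaces) and restricting along the diagonal to $\spa{B}$ yields the first isomorphism.

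For the second isomorphism, I apply Proposition \ref{closed immersion behaves well} to the diagonal of the smooth morphism $\spa{B}\to\spa{A}$ to conclude that the ideal sheaf $\mathcal{I}$ on $\spa{B\widehat{\otimes}_{A}B}$ is pseudocoherent. By Theorem \ref{equivalence of categories pseudocoherent}, $\mathcal{I}=\widetilde{I}$ with $I:=\ker(B\widehat{\otimes}_{A}B\to B)$ a closed ideal, and so $\Omega^{1}_{\spa{B}/\spa{A}}=\widetilde{I/I^{2}}$ as a sheaf on $\spa{B}$. It then remains to exhibit $I/I^{2}$ as a completion of the classical $\Omega^{1}_{B/A}=J/J^{2}$, where $J:=\ker(B\otimes_{A}B\to B)$. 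I would equip $J$ with the subspace topology inherited from $B\otimes_{A}B$ and $J/J^{2}$ with the induced quotient topology; this is the ``suitable topology'' alluded to in the statement. The canonical continuous ring map $B\otimes_{A}B\to B\widehat{\otimes}_{A}B$, which has dense image by the definition of the completed tensor product, sends $J$ into $I$ with dense image and $J^{2}$ into $I^{2}$, yielding a continuous map $J/J^{2}\to I/I^{2}$ with dense image.

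The main obstacle, and the crux of the proof, is to show that this map realises $I/I^{2}$ as the Hausdorff completion of $J/J^{2}$, i.e.\ that $I/I^{2}$ is itself Hausdorff and complete for its natural topology as a quotient of the closed subspace $I\subset B\widehat{\otimes}_{A}B$. Concretely, this amounts to showing that $I^{2}$ is closed in $I$. For this I plan to use that $I/I^{2}$ carries a natural $B$-module structure via the surjection $B\widehat{\otimes}_{A}B\to B$, and that as such it is a pseudocoherent (hence, by the definition of $\mathrm{PCoh}$, complete) $B$-module: the key inputs are pseudocoherence of $\mathcal{I}$ from Proposition \ref{closed immersion behaves well}, pseudoflatness of the pullback along the closed immersion $\spa{B}\to\spa{B\widehat{\otimes}_{A}B}$, and the fact that $B$ is a complete uniform Tate ring. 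Once completeness of $I/I^{2}$ and density of the image of $J/J^{2}$ are in hand, the universal property of completion gives the desired isomorphism $I/I^{2}\cong\widehat{J/J^{2}}=\widehat{\Omega^{1}_{B/A}}$, which combined with the first isomorphism finishes the proof.
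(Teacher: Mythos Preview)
Your argument for the first isomorphism is fine; the paper in fact only treats the second one. For the second isomorphism, however, your final step has a genuine gap. You claim that once the map $J/J^{2}\to I/I^{2}$ has dense image and $I/I^{2}$ is complete, ``the universal property of completion gives the desired isomorphism $I/I^{2}\cong\widehat{J/J^{2}}$.'' This is false in general: a continuous map $M\to N$ with dense image into a complete Hausdorff module produces a continuous map $\widehat{M}\to N$, but neither injectivity nor surjectivity follows without knowing that the topology on $M$ agrees with the one induced from $N$. (Already $\mathbb{Z}$ with the discrete topology maps densely into $\mathbb{Z}_{p}$, yet $\widehat{\mathbb{Z}}=\mathbb{Z}$.) In your situation this would require comparing the quotient topology on $J/J^{2}$, which comes from the \emph{uncompleted} ring $B\otimes_{A}B$, with the topology induced from $I/I^{2}$, and you do not address this. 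Relatedly, the ``pseudoflatness of the pullback along the closed immersion'' you invoke to get completeness of $I/I^{2}$ is not what Theorem~\ref{pseudoflatness of rational localizations} provides (that result is for rational localisations); the paper instead uses that $\Omega^{1}_{\spa{B}/\spa{A}}$ is a vector bundle, so its global sections are finite projective over $B$ and hence complete.

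The paper sidesteps the topology comparison by a two-step argument. First, it proves (working with rings of definition, killing $\varpi$-power torsion, and using that derived and classical $\varpi$-completion agree on $\varpi$-torsion-free modules) that the kernel of $\widehat{B\otimes_{A}B}\to B$ is literally the completion $\widehat{J}$ of your $J$, not merely a module in which $J$ sits densely. Second, writing $I=\widehat{J}$, it does \emph{not} try to compare $\widehat{J}/\widehat{J}^{2}$ with $\widehat{J/J^{2}}$ via density; instead it verifies that both (after one harmless further completion, the former being already complete) satisfy the same universal property: each is initial among complete topological $B$-modules $N$ equipped with a continuous $B\otimes_{A}B$-linear map $J\to N$. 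This functor-of-points identification is the idea your argument is missing.
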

	
	In what follows we perform a relatively long computation that proves the second equality of Proposition \ref{description of the sheaf of differentials}. This computation consists of two main steps. We will be able to state the first step after a couple of paragraphs.
	
	By definition, $\Omega^{1}_{\spa{B}/\spa{A}}$ is the pullback of the ideal sheaf of the global closed immersion \[ \spa{B}\to \spa{\widehat{B\otimes_{A} B}}. \] Since both $\spa{B}$ and $ \spa{\widehat{B\otimes_{A} B}}$ are smooth over $\spa{A}$, we can apply Lemma \ref{closed immersion behaves well} to deduce that the ideal sheaf $\mathcal{J}$ is pseudocoherent, and if we denote its global sections by $J:= \mathcal{J}(\spa{\widehat{B\otimes_{A}B}})$, its pullback is equal to $\widetilde{J/J^{2}}$. In other words, thanks to the theory of pseudocoherent sheaves, we can reduce to topological algebra (no more sheaves). We record the short exact sequence 
	
	\begin{equation}\label{equation}
		0\to J\to \widehat{B\otimes_{A}B} \to B\to 0.
	\end{equation}

	In order to state the first step, we need to fix some notation. Choose rings of definition $A_{0}\subset A$ and $B_{0}\subset B$ such that $A_{0}$ maps inside $B_{0}$. Set $C_{0}:= B_{0}\otimes_{A_{0}}B_{0}$ and $C := B\otimes_{A}B$ for brevity. Let $I_{0}$ be the kernel of the surjection $C_{0} \to B_{0}$, so that we have a short exact sequence of topological $C_{0}$-modules \[ 0 \to I_{0}\to C_{0}\to B_{0}\to 0. \] Let $\varpi\in A_{0}$ be a pseudouniformizer in the Tate ring $A$. Since pseudouniformizers map to pseudouniformizers, $\varpi$ can be also viewed as an element of $B_{0}$ and $C_{0}$ (by abuse of notation). Thus, $A = A_{0}[\frac{1}{\varpi}]$, $B = B_{0}[\frac{1}{\varpi}]$, and $C = C_{0}[\frac{1}{\varpi}]$. Denote $I := I_{0}[\frac{1}{\varpi}]$ as a topological $C_{0}$-module. We will define and explain the topology on $I$ later.
	
	The first step of this lengthy computation is to prove the equality $J=\widehat{I}$. This will be achieved in three pages.
	
	Consider the following commutative diagram of abelian groups
	
	\[ \begin{tikzcd}
		\ph&0&0&0&\ph\\
		\ph& I_{0}/I_{0}[\varpi^{\infty}]\arrow[r]\arrow[u]&  C_{0}/C_{0}[\varpi^{\infty}] \arrow[r]\arrow[u]&  B_{0}\arrow[u]& \ph\\
		0\arrow[r]&I_{0}\arrow[r]\arrow[u]&C_{0}\arrow[r]\arrow[u]&B_{0}\arrow[r]\arrow[u, "\text{id}"]&0\\
		0\arrow[r]&I_{0}[\varpi^{\infty}]\arrow[r, "\cong"]\arrow[u]&C_{0}[\varpi^{\infty}]\arrow[r]\arrow[u]&0\arrow[r]\arrow[u]&0\\
		\ph&0\arrow[u]&0\arrow[u]&0,\arrow[u]&\ph
	\end{tikzcd} \] where $(-)[\varpi^{\infty}]$ denotes $\varpi$-power torsion. In this diagram, vertical columns are exact by the obvious reason, the middle row is exact by assumption, and the bottom row is exact because $B_{0}\subset B$ is $\varpi$-torsion free. By diagram chase, we get the exactness of the sequence
	\[ \begin{tikzcd}
		0\arrow[r]& I_{0}/I_{0}[\varpi^{\infty}]\arrow[r]&  C_{0}/C_{0}[\varpi^{\infty}] \arrow[r]&  B_{0}\arrow[r] & 0.\\
	\end{tikzcd} \]
	
	We endow $A_{0}$ with the subspace topology from $A$, we endow $B_{0}$ with the subspace topology from $B$, and we endow $C_{0}$ with the tensor product topology -- this way all three topologies are $\varpi$-adic. We endow $I_{0}$ with subspace topology from $C_{0}$. In fact, since $B_{0}$ is $\varpi$-torsion free, the topology on $I_{0}$ is $\varpi$-adic, too. We endow $I_{0}/I_{0}[\varpi^{\infty}] $ and $ C_{0}/C_{0}[\varpi^{\infty}]$ with quotient topologies; they also are $\varpi$-adic. 
	
	Since all modules in the latest short exact sequence are $\varpi$-torsion free, the vertical arrows in the diagram
	\[ \begin{tikzcd}
		0\arrow[r]& I \arrow[r]&  C\arrow[r]&  B \arrow[r]& 0\\
		0\arrow[r]& (I_{0}/I_{0}[\varpi^{\infty}])[\dfrac{1}{\varpi}]\arrow[r]\arrow[u, equal]&  (C_{0}/C_{0}[\varpi^{\infty}])[\dfrac{1}{\varpi}] \arrow[r]\arrow[u, equal]&  B_{0}\arrow[r][\dfrac{1}{\varpi}]\arrow[u, equal]& 0\\
		0\arrow[r]& I_{0}/I_{0}[\varpi^{\infty}]\arrow[r]\arrow[u, hook]&  C_{0}/C_{0}[\varpi^{\infty}] \arrow[r]\arrow[u, hook]&\arrow[u]  B_{0}\arrow[r]\arrow[u, hook] & 0\\
	\end{tikzcd} \]
	are injective. If we topologize the modules in the middle row by declaring the modules in the bottom row to be open, we will recover the topologies of $I, C, B$ that they already possess\footnote{for the case of $C = B\otimes_{A}B$, see \cite[Proposition 1.2.2]{Hu96}. }. 
	
	The point of the above computation is that we would like to apply completion functor to the top row of the above diagram, so that we can compare the result with equation (\ref{equation}). The tricky part is to agrue that in this special case completion will preserve exactness. The argument is made by utilizing the three lemmas below.
	
	In what follows, localizations of a topological module will always be endowed with topology as above, i.e., by declaring the localization map to be open. In what follows, completions are \emph{always} meant to be with respect to the topology that the underlying topological abelian group carries, for example, the completion of a $\varpi$-adic module is exactly the $\varpi$-adic completion.
	
	\begin{lemma}[Slogan: when completion commutes with localization]\label{completion of localization = localization of completion}
		Let $R$ be a $\varpi$-adic ring, where $\varpi\in R$ is an element. Let $M$ be a topological $R$-module that is $\varpi$-torsion free and whose topology is $\varpi$-adic. 
		There are isomorphisms of topological $R[\frac{1}{\varpi}]$-modules \[  \widehat{\widehat{M}[\dfrac{1}{\varpi}]}\cong\widehat{M[\dfrac{1}{\varpi}]}\cong\widehat{M}[\dfrac{1}{\varpi}].\]
	\end{lemma}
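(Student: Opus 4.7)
The plan is to prove both isomorphisms simultaneously by establishing a single claim: the natural map $\iota \colon M[\frac{1}{\varpi}] \to \widehat{M}[\frac{1}{\varpi}]$ exhibits the target as the completion of the source. This would immediately give $\widehat{M[\frac{1}{\varpi}]} \cong \widehat{M}[\frac{1}{\varpi}]$ by the universal property of completion, and the outer $\widehat{\widehat{M}[\frac{1}{\varpi}]}$ then collapses to the same object because completing an already complete topological module changes nothing. Thus the entire statement reduces to two assertions about $\widehat{M}[\frac{1}{\varpi}]$: it is complete, and $\iota$ is a strict continuous map with dense image.

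Before tackling those, I would record two preliminary algebraic facts. First, $\widehat{M}$ inherits $\varpi$-torsion freeness from $M$: if $\hat{x} = (x_n) \in \varprojlim_n M/\varpi^n M$ satisfies $\varpi \hat{x} = 0$, then lifting $x_n$ to $\tilde{x}_n \in M$ gives $\varpi \tilde{x}_n \in \varpi^n M$, whence $\tilde{x}_n \in \varpi^{n-1} M$ by $\varpi$-torsion freeness of $M$; combined with the compatibility of the inverse system this forces $x_n = 0$. Second, the standard identity $\widehat{M}/\varpi^n \widehat{M} = M/\varpi^n M$ holds since the topology on $M$ is $\varpi$-adic.

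The main obstacle is proving completeness of $\widehat{M}[\frac{1}{\varpi}]$, since a priori a Cauchy net $(s_k)$ could require unboundedly many negative powers of $\varpi$. The saving observation is that applying the Cauchy condition to the neighborhood $\widehat{M} \subset \widehat{M}[\frac{1}{\varpi}]$ produces some index $K$ with $s_k - s_K \in \widehat{M}$ for all $k \geq K$, so the entire tail of the net lies inside $\varpi^{-m} \widehat{M}$ where $m \geq 0$ depends only on $s_K$. Multiplying through by $\varpi^m$ produces a Cauchy net in $\widehat{M}$ itself, which converges by $\varpi$-adic completeness, and dividing back delivers the desired limit in $\widehat{M}[\frac{1}{\varpi}]$.

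Finally, strictness of $\iota$ reduces to the identity $\iota^{-1}(\varpi^n \widehat{M}) = \varpi^n M$: any element of the left side can be written as $\varpi^{-m} y$ with $y \in M$ and $\iota(y) \in \varpi^{n+m} \widehat{M}$, which by the preliminary identity means $y \in \varpi^{n+m} M$, hence $\varpi^{-m} y \in \varpi^n M$. Density is analogous: for $\varpi^{-m} t \in \widehat{M}[\frac{1}{\varpi}]$ and any target neighborhood $\varpi^n \widehat{M}$, approximate $t \in \widehat{M}$ by some $\iota(y)$ modulo $\varpi^{n+m} \widehat{M}$ using density of $M$ in $\widehat{M}$. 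The universal property of completion then delivers both required isomorphisms, and the topological module structures match because everything in sight is an $R[\frac{1}{\varpi}]$-linear construction performed on topological $R$-modules.
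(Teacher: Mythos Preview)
Your proof is correct and rests on the same key fact as the paper's: that $\widehat{M}$ inherits $\varpi$-torsion freeness from $M$, which is what makes $\widehat{M}[\frac{1}{\varpi}]$ Hausdorff complete. The packaging, however, differs. The paper argues by checking that all three modules satisfy the same universal property (initial among complete topological $R[\frac{1}{\varpi}]$-modules receiving a continuous $R$-linear map from $M$), and for $\widehat{M}[\frac{1}{\varpi}]$ this reduces immediately to completeness, which the paper deduces in one line from the open embedding $\widehat{M}\hookrightarrow\widehat{M}[\frac{1}{\varpi}]$. You instead use the concrete characterization of a completion as a Hausdorff complete space receiving a strict dense map, verifying completeness by an explicit Cauchy-net argument and then checking strictness and density by hand via the identity $\widehat{M}/\varpi^n\widehat{M}\cong M/\varpi^n M$. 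Your route is more elementary and self-contained; the paper's is shorter but leaves the verification of the universal property to the reader. Either way the substance is the same.
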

	
	\begin{remark}
		While the first isomorphism holds even without the assumption of $M$ being $\varpi$-torsion free, this assumption is important for the second isomorphism to hold. For example, consider $R = \mathbb{Z}$, $\varpi = p$, and $M = \bigoplus_{n>0}\mathbb{Z}/p^{n}\mathbb{Z}$. Then, since $M=M[p^{\infty}]$, the module $\widehat{M[\frac{1}{\varpi}]}$ is zero. At the same time, $\widehat{M}[\frac{1}{\varpi}] \neq 0$, because $\widehat{M}$ has an element which is not killed by any power of $p$. To find such an element is left as an exercise to the reader. 
		
		To say even more, if $M$ is not $\varpi$-torsion free, we are afraid that the module $\widehat{M}$ could stop being complete after we invert $\varpi$, because modding-out $\varpi$-power torsion might spoil Hausdorfness. In case $M$ is $\varpi$-torsion free, the map $\widehat{M}\to \widehat{M}[\frac{1}{\varpi}] $ is injective (topological open immersion), and since $\widehat{M}$ is (Hausdorff) complete, the module $ \widehat{M}[\frac{1}{\varpi}] $ will remain (Hausdorff) complete, which is the crucial point in the proof below.
	\end{remark}
	
	\begin{proof}
		Once we show that the module $\widehat{M}[\frac{1}{\varpi}] $ is complete, it is not hard to check that each of the three topological $R[\frac{1}{\varpi}]$-modules from above (say, $\widehat{M}[\frac{1}{\varpi}]$) satisfies the following universal property. For any complete topological $R[\frac{1}{\varpi}]$-module $N$ and any (continuous) map of topological $R$-modules $f\colon M\to N$ there exists a unique map of topological $R[\frac{1}{\varpi}]$-modules $\tilde{f}\colon\widehat{M}[\frac{1}{\varpi}]\to N$ such that the diagram
		\[ \begin{tikzcd}
			&\widehat{M}[\dfrac{1}{\varpi}]\arrow[d, "\tilde{f}"]\\
			M\arrow[ur, "\text{canonical}"]\arrow[r, "f"]&N
		\end{tikzcd} \] commutes. The universal property would then yield the isomorphisms. It is therefore left to show that $\widehat{M}[\frac{1}{\varpi}]$ is already (Hausdorff) complete. This is clear from the remark above plus the following fact: if an $R$-module $M$ is $\varpi$-torsion free, then so is its completion $\widehat{M}$. We prove this fact now, so assume $M$ is $\varpi$-torsion free. Let $(m_{n})_{n>0}\in \varprojlim_{n> 0 }M/\varpi^{n}M$ be an element that satisfies\[  \varpi(m_{n})_{n>0} = (\varpi m_{n})_{n>0} = 0\in\varprojlim_{n> 0 }M/\varpi^{n}M. \]
		It means that $\varpi m_{n} = 0 \in M/\varpi^{n}M$. Because multiplication by $\varpi\colon M\to M$ is injective, we can deduce $m_{n}\in \varpi^{n-1}M$. Since $m_{n-1}$ is exactly the reduction of $m_{n}$ modulo $\varpi^{n-1}M$, we obtain $m_{n-1} = 0$. But $n$ was arbitrary, so $\widehat{M}$ is $\varpi$-torsion free, as desired.
	\end{proof}
	
	Lemma \ref{completion of localization = localization of completion} allows us to swap the order of localization and completion, and since localization is always exact, it suffices to show that $\varpi$-adic completion of the sequence 
	\[ \begin{tikzcd}
		0\arrow[r]& I_{0}/I_{0}[\varpi^{\infty}]\arrow[r]&  C_{0}/C_{0}[\varpi^{\infty}] \arrow[r]&  B_{0}\arrow[r] & 0\\
	\end{tikzcd} \] remains exact. Note, however, that all modules in this sequence are $\varpi$-torsion free. The combination of the two lemmas below yields what we want.
	
	\begin{lemma}[{\cite[Lecture III, Lemma 2.4]{Bh}}]
		Let $R$ be a ring, and let $\varpi\in R$ be an element. If an $R$-module $M$ has bounded $\varpi$-power torsion, then derived $\varpi$-completion of $M$ is concentrated in degree $0$ and coincides with its classical $\varpi$-completion.
	\end{lemma}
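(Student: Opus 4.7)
The plan is to compute the derived $\varpi$-completion $M^{\wedge, L}_{\varpi}$ as the derived inverse limit $R\lim_{n} (M \otimes^{L}_{R} R/\varpi^{n})$, compute each term explicitly via a Koszul resolution, and then verify that under the bounded $\varpi$-power torsion hypothesis both the negative cohomology and the $R^{1}\lim$ contributions vanish.

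First, resolve each $R/\varpi^{n}$ by the two-term complex $[R \xrightarrow{\varpi^{n}} R]$ placed in degrees $-1, 0$. Tensoring with $M$, we identify $M \otimes^{L}_{R} R/\varpi^{n}$ with the complex $[M \xrightarrow{\varpi^{n}} M]$, which has cohomology $M[\varpi^{n}]$ in degree $-1$ and $M/\varpi^{n} M$ in degree $0$. Chasing the resolutions, the transition maps induced by the reductions $R/\varpi^{n+1} \to R/\varpi^{n}$ become the natural surjections $M/\varpi^{n+1}M \to M/\varpi^{n}M$ on $H^{0}$ and multiplication by $\varpi$ on the pro-system $\{M[\varpi^{n}]\}$ on $H^{-1}$. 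The correct identification of the transition map as multiplication by $\varpi$ (rather than the inclusion) is the subtle point, and it is the crux that will let the bounded torsion hypothesis intervene.

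Next, apply the spectral sequence $E_{2}^{p,q} = R^{p}\lim_{n} H^{q}(C_{n}^{\bullet}) \Rightarrow H^{p+q}(R\lim_{n} C_{n}^{\bullet})$, which for an inverse system of abelian groups collapses to the Milnor short exact sequences since only $R^{0}\lim$ and $R^{1}\lim$ are nonzero. For the surjective system $\{M/\varpi^{n}M\}$ one gets $\lim = M^{\wedge}_{\varpi}$ (the classical $\varpi$-adic completion) and $R^{1}\lim = 0$. For the system $\{M[\varpi^{n}]\}$ with transition $\cdot \varpi$: choose $N$ with $M[\varpi^{\infty}] = M[\varpi^{N}]$; then for every $n \geq N$, the map from $M[\varpi^{n+N}]$ to $M[\varpi^{n}]$ is multiplication by $\varpi^{N}$, which kills $M[\varpi^{\infty}]$. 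Hence the pro-system is pro-zero, so both $\lim$ and $R^{1}\lim$ vanish.

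Assembling the Milnor sequences shows $H^{0}(M^{\wedge, L}_{\varpi}) = M^{\wedge}_{\varpi}$ and $H^{i}(M^{\wedge, L}_{\varpi}) = 0$ for all $i \neq 0$, which is the conclusion. The main obstacle is really just the identification of the transition map on $H^{-1}$; once that is in place, the bounded torsion hypothesis almost mechanically makes the pro-system trivial and the argument reduces to the standard Milnor sequence for the classical completion of the quotient system.
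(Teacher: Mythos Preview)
Your argument is correct and is essentially the standard proof. The paper itself does not prove this lemma: it is stated only as a citation to Bhatt's lecture notes and invoked as a black box, so there is no ``paper's own proof'' to compare against. Your Koszul computation, the identification of the transition map on $H^{-1}$ as multiplication by $\varpi$, and the resulting pro-zero argument under the bounded torsion hypothesis are exactly the expected ingredients, and your use of the Milnor sequences is the right way to assemble them.
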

	
	Since $\varpi$-torsion free implies bounded $\varpi$-torsion, and since derived $\varpi$-completion is right exact in general, we immediatly get the right exact sequence
	\[ \begin{tikzcd}
		\widehat{I_{0}/I_{0}[\varpi^{\infty}]\arrow[r]}&  \widehat{C_{0}/C_{0}[\varpi^{\infty}] \arrow[r]}&  \widehat{B_{0}\arrow[r]}=B_{0} & 0.\\
	\end{tikzcd} \]
	
	We finish off with this lemma.
	\begin{lemma}
		Let $R$ be a ring, let $\varpi\in R$ be an element, and let $0\to L \overset{i}{\to}  M\overset{g}{\to} N\to 0$ be a short exact sequence of $R$-modules. If $N$ is $\varpi$-torsion free, then the induced map of $\varpi$-adic completions $\widehat{L}\overset{\widehat{i}}{\to}\widehat{M}$ is injective.
	\end{lemma}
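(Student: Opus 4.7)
The natural approach is to reduce injectivity of $\widehat{i}$ to left-exactness of the inverse limit, after checking that the $\varpi$-torsion freeness of $N$ gives us short exact sequences at each finite level. Concretely, I would tensor the short exact sequence $0\to L\to M\to N\to 0$ with $R/\varpi^{n}$ for each $n$, obtaining the Tor long exact sequence
\[ \mathrm{Tor}_{1}^{R}(N, R/\varpi^{n})\to L/\varpi^{n}L\to M/\varpi^{n}M\to N/\varpi^{n}N\to 0. \]

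The key observation is that $\mathrm{Tor}_{1}^{R}(N, R/\varpi^{n})\cong N[\varpi^{n}]$, and the hypothesis that $N$ is $\varpi$-torsion free forces this group to vanish. Hence for every $n$ we have a short exact sequence
\[ 0\to L/\varpi^{n}L\to M/\varpi^{n}M\to N/\varpi^{n}N\to 0. \]
Passing to the inverse limit over $n$ (which is always left exact on abelian groups) yields an exact sequence $0\to\widehat{L}\to\widehat{M}\to\widehat{N}$, so in particular $\widehat{i}\colon\widehat{L}\to\widehat{M}$ is injective.

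One small subtlety worth flagging is that the two a priori different filtrations on $L$ — the $\varpi$-adic filtration $\{\varpi^{n}L\}$ used for completion and the subspace filtration $\{L\cap\varpi^{n}M\}$ inherited from $M$ — in fact coincide precisely because $N$ is $\varpi$-torsion free: if $\ell\in L$ equals $\varpi^{n}m$ for some $m\in M$, then $g(m)\in N[\varpi^{n}]=0$, so $m\in L$ and $\ell\in\varpi^{n}L$. This is reassuring but not strictly needed for the argument above, since we never had to compare the two filtrations to run the Tor computation.

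I do not expect a real obstacle here: the only nontrivial input is the identification of $\mathrm{Tor}_{1}^{R}(N,R/\varpi^{n})$ with $N[\varpi^{n}]$ via the free resolution $0\to R\xrightarrow{\varpi^{n}}R\to R/\varpi^{n}\to 0$, which is entirely routine. The rest is formal manipulation of exact sequences and limits.
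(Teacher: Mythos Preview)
Your argument is correct, but the paper takes the more elementary route that you relegated to the ``subtlety'' paragraph. The paper proves directly that each level map $i_{n}\colon L/\varpi^{n}L\to M/\varpi^{n}M$ is injective by exactly the element chase you wrote: if $\ell=\varpi^{n}m$ with $m\in M$, then $\varpi^{n}g(m)=0$, so $g(m)=0$ by $\varpi$-torsion freeness of $N$, hence $m\in L$ and $\ell\in\varpi^{n}L$. Then $\widehat{i}$, being the limit of the $i_{n}$, is injective. So what you flagged as ``reassuring but not strictly needed'' is literally the paper's proof, while your Tor/inverse-limit packaging is a more abstract route to the same levelwise injectivity.

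One small caveat on your Tor justification: the complex $0\to R\xrightarrow{\varpi^{n}}R\to R/\varpi^{n}\to 0$ is a free resolution only when $\varpi$ is a nonzerodivisor in $R$, which the lemma does not assume. The vanishing $\mathrm{Tor}_{1}^{R}(N,R/\varpi^{n})=0$ is nonetheless true in general for $\varpi$-torsion free $N$ (use the short exact sequence $0\to\varpi^{n}R\to R\to R/\varpi^{n}\to 0$ with $\varpi^{n}R\cong R/\mathrm{Ann}(\varpi^{n})$ and check that the connecting map has trivial image), so your argument survives; but the cleanest fix is simply to promote your direct filtration comparison $L\cap\varpi^{n}M=\varpi^{n}L$ to the main argument, since it already gives levelwise injectivity without any homological machinery.
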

	\begin{proof}
		Since the map \[ \widehat{i}\colon \varprojlim_{n> 0 }L/\varpi^{n}L \to \varprojlim_{n> 0 }M/\varpi^{n}M \] is given coordinate-wise by $i_{n}\colon L/\varpi^{n}L\to M/\varpi^{n}M$, it is enough to show that each individual $i_{n}$ is injective. Assume $l+\varpi^{n}L\in L/\varpi^{n}L$ maps to $0$, i.e., \[ i_{n}(l+\varpi^{n}L) = i(l)+\varpi^{n}M = l+\varpi^{n}M = 0\in M/\varpi^{n}M. \]
		This means that we can write $l = m\varpi^{n}$ for $m\in M$. By exactness of the sequence, $g(m\varpi^{n}) = g(m)\varpi^{n} = 0$. By assumption that $N$ is $\varpi$-torsion free, $g(m) = 0$. By exactness of the sequence, $m\in L$. Therefore, $l+\varpi^{n}L = 0 \in   L/\varpi^{n}L$, as desired.
	\end{proof}
	
	At last, we obtain the exact sequences
	
	\[ \begin{tikzcd}
		0\arrow[r]&\widehat{I_{0}/I_{0}[\varpi^{\infty}]\arrow[r]}&  \widehat{C_{0}/C_{0}[\varpi^{\infty}] \arrow[r]}&  B_{0} \arrow[r]& 0\\
	\end{tikzcd} \] and
	\[ \begin{tikzcd}
		0\arrow[r]&\widehat{I_{0}/I_{0}[\varpi^{\infty}]\arrow[r]}[\dfrac{1}{\varpi}]&  \widehat{C_{0}/C_{0}[\varpi^{\infty}] \arrow[r]}[\dfrac{1}{\varpi}]&  B_{0}[\dfrac{1}{\varpi}] \arrow[r]& 0,\\
	\end{tikzcd} \] 
	and applying Lemma \ref{completion of localization = localization of completion} we realize that the last sequence is just \[ 	0\to \widehat{I}\to \widehat{B\otimes_{A}B} \to B\to 0. \] Comparing with (\ref{equation}), we see that $J = \widehat{I}$. This finishes the first step of the computation.
	
	We now intend to deal with the second step of the computation, simultaneously proving Proposition \ref{description of the sheaf of differentials}. We will be able to state the second step after a couple of paragraphs. In any case, we begin the proof of Proposition \ref{description of the sheaf of differentials}.
	
	\begin{proof}
		By definition, $\Omega^{1}_{\spa{B}/\spa{A}} \cong J/J^{2}$, and we have just computed that this is the same as $\widehat{I}/\widehat{I}^{2} $. 
		
		On the one hand, we know that $\Omega^{1}_{\spa{B}/\spa{A}}$ is a vector bundle, so it corresponds to a finite projective $B$-module, in particular, it is complete. Hence, we can write $\widehat{I}/\widehat{I}^{2} = \widehat{\widehat{I}/\widehat{I}^{2}} $. 
		
		\begin{remark}
			We know \[ \widehat{\Omega^{1}_{B/A}} = \widehat{\Omega^{1}_{B_{0}/A_{0}}[\dfrac{1}{\varpi}]}, \] where $\Omega^{1}_{B_{0}/A_{0}}$ has its standard structure of a topological $B_{0}$-module\footnote{in fact, the topology is $\varpi$-adic in this case.}. We thus let the reader know how to topologize $\Omega^{1}_{B/A}$. 
		\end{remark}
		
		On the other hand,  \[ \widehat{\Omega^{1}_{B/A}} = \widehat{I/I^{2}}, \] and if we endow $\Omega^{1}_{B/A}$ with the quotient topology of $I/I^{2}$, we recover the topology from the above remark. 
		
		Therefore, we are left to show the equality of complete topological $B$-modules \[ \widehat{\widehat{I}/\widehat{I}^{2}}\cong\widehat{I/I^{2}}, \] proving Proposition \ref{description of the sheaf of differentials}. And indeed, this is the second step of our computation. Now, we carry it out.
		
		 Note that for $L = \widehat{\widehat{I}/\widehat{I}^{2}}$ or $L = \widehat{I/I^{2}} $ we have the canonincal map of topological $C$-modules $I\to L$. We will show that $L$ satisfies the following universal property: $L$ is a complete topological $B$-module together with a map of topological $C$-modules $I\to L$, such that for any other complete topological $B$-module $N$ and any map of topological $C$-modules $f\colon I\to N$ there exists unique map of topological $B$-modules $\tilde{f}\colon L\to N$ such that the diagram 
		\[ \begin{tikzcd}
			&L\arrow[d, "\tilde{f}"]\\
			I\arrow[ur, "\text{canonical}"]\arrow[r, "f"]&N
		\end{tikzcd} \]
		commutes. One should think about $L$ as of the completed tensor product $``\widehat{I\otimes_{C}B}"$, so that unraveling the universal properties of the completion and of the tensor product one by one, we get the stated universal property. The only problem with this way of thinking is that we do not know, a priori, how to topologize this tensor product (of modules, not of rings!), but in this special case we can endow $I\otimes_{C}B = I/I^{2}$ with quotient topology, and it saves the day for us. 
		
		We now prove that the universal property is satisfied for both $L = \widehat{\widehat{I}/\widehat{I}^{2}}$ and $L = \widehat{I/I^{2}} $, which would yield the desired isomorphism. We start with $L = \widehat{I/I^{2}} $. Consider the diagram
		\[ \begin{tikzcd}
			&\widehat{I/I^{2}}\arrow[d, "\tilde{f}"]\\
			I\arrow[ur, "\text{canonical}"]\arrow[r, "f"]&N.
		\end{tikzcd} \]
		Since $N$ is complete, by the universal property of completions, there is a bijection between $\tilde{f}$ and $\overline{f}$:
		\[ \begin{tikzcd}
			&{I/I^{2}}\arrow[d, "\overline{f}"]\\
			I\arrow[ur, "\text{canonical}"]\arrow[r, "f"]&N.
		\end{tikzcd} \]
	Now, by the tensor product adjunction, a map of $B$-modules $\overline{f}\colon I/I^{2}\to N$ is the same as a map of $C$-modules $f\colon I\to N$ (note that $I/I^{2} = I\otimes_{C}B$). That continuous $\overline{f}$ correspond to continuous $f$ follows from the fact that the map $I\to I/I^{2}$ is a topological quotient map. That deals with $L = \widehat{I/I^{2}} $.
	
	We show the same universal property for $L = \widehat{\widehat{I}/\widehat{I}^{2}}$. Consider the diagram 
	\[ \begin{tikzcd}
		&&\widehat{\widehat{I}/\widehat{I}^{2}}\arrow[d, "\tilde{f}"]\\
		I\arrow[r, "\iota"]\arrow[rr, bend right, "f"]&\widehat{I}\arrow[ur, "\text{can.}"]\arrow[r, "\widehat{f}"]&N.
	\end{tikzcd} \]
	By the univeral property of completion, there is a bijection between $\tilde{f}$ and $\overline{f}$:
		\[ \begin{tikzcd}
		&&{\widehat{I}/\widehat{I}^{2}}\arrow[d, "\overline{f}"]\\
		I\arrow[r, "\iota"]\arrow[rr, bend right, "f"]&\widehat{I}\arrow[ur, "\text{can.}"]\arrow[r, "\widehat{f}"]&N.
	\end{tikzcd} \]
	Now, by the tensor product adjunction, a map of $B$-modules $\overline{f}\colon \widehat{I}/\widehat{I}^{2}\to N$ is the same as a map of $\widehat{C}$-modules $\widehat{f}\colon \widehat{I}\to N$ (note that $\widehat{I}/\widehat{I}^{2} = \widehat{I}\otimes_{\widehat{C}}B$). That continuous $\overline{f}$ correspond to continuous $\widehat{f}$ follows from the fact that the map $\widehat{I}\to \widehat{I}/\widehat{I}^{2}$ is a topological quotient map. Finally, maps of topological $\widehat{C}$-modules $\widehat{f}\colon \widehat{I}\to N$ are in bijection with maps of topological $C$-modules $f\colon I\to N$, by the universal proprety of completion. 
	
	\end{proof}

\end{document}